\newtheorem{theorem}{Theorem}[section]
\theoremstyle{plain}
\newtheorem{definition}[theorem]{Definition}
\newtheorem{example}[theorem]{Example}
\newtheorem{lemma}[theorem]{Lemma}
\newtheorem{proposition}[theorem]{Proposition}
\newtheorem{remark}[theorem]{Remark}
\numberwithin{equation}{section}
\begin{document}
\title[Semi-Quasielliptic Operators Over $p$-adic Fields ]{Pseudo-differential Operators with Semi-Quasielliptic Symbols Over $p$-adic
Fields }
\author{J. Galeano-Peñaloza}
\address{Centro de Investigación y de Estudios Avanzados del Instituto Politécnico Nacional\\
Departamento de Matemáticas- Unidad Querétaro\\
Libramiento Norponiente \#2000, Fracc. Real de Juriquilla. Santiago de
Querétaro, Qro. 76230\\
México\\
and Universidad Nacional de Colombia, Departamento de Matemáticas, Ciudad
Universitaria, Bogotá, Colombia}
\email{jgaleano@math.cinvestav.mx and jgaleanop@unal.edu.co }
\author{W. A. Zúñiga-Galindo}
\address{Centro de Investigación y de Estudios Avanzados del Instituto Politécnico Nacional\\
Departamento de Matemáticas- Unidad Querétaro\\
Libramiento Norponiente \#2000, Fracc. Real de Juriquilla. Santiago de
Querétaro, Qro. 76230\\
México}
\email{wazuniga@math.cinvestav.edu.mx}
\thanks{The second author was partially supported by CONACYT under Grant \# 127794.}

\begin{abstract}
In this article, we study pseudo-differential operators with a
semi-quasielliptic symbols, which are extensions of operators of the form
\[
\boldsymbol{F}(D;\alpha;x)\phi=\ \allowbreak\mathcal{F}^{-1}(|F(\xi
,x)|_{p}^{\alpha}\mathcal{F}\left(  \phi\right)  ),
\]
where $\alpha>0$, $\phi$ is a function of Lizorkin type and $\mathcal{F}$
denotes the Fourier transform, and $F\left(  \xi,x\right)  =f\left(
\xi\right)  +\sum_{\boldsymbol{k}}c_{\boldsymbol{k}}\left(  x\right)
\xi^{\boldsymbol{k}}\in\mathbb{Q}_{p}\left[  \xi_{1},\ldots,\xi_{n}\right]  $,
where $f(\xi)$ is a quasielliptic polynomial of degree $d$, and each
$c_{\boldsymbol{k}}\left(  x\right)  $ is a function from $\mathbb{Q}_{p}^{n}$
into $\mathbb{Q}_{p}$ satisfying $||c_{\boldsymbol{k}}(x)||_{L^{\infty}%
}<\infty$. We determine the function spaces where the equations
$\boldsymbol{F}(D;\alpha;x)u=v$ have solutions. We introduce the space of
infinitely pseudo-differentiable functions with respect to a
semi-quasielliptic operator. By using these spaces we show the existence of a
regularization effect for certain parabolic equations over $p$-adics.

\end{abstract}
\keywords{pseudo-differential operators, quasielliptic polynomials, semi-quasielliptic
polynomials, regularization effect on parabolic equations, functional analysis
over $p$-adic numbers}
\subjclass{Primary: 35S05, 35K90, Secondary: 46S10.}
\maketitle

\section{Introduction}

The $p$-adic pseudo-differential equations have received a lot of attention
during the last ten years due mainly to fact that some of these equations
appeared in certain new physical models, see e.g. \cite[and references
therein]{D-K-K}, \cite[and references therein]{A-K-S-1}, \cite[and references
therein]{Koch}, \cite{V-V-Z}. There are two motivations behind this article.
The first one is to understand the connection between the $p$-adic Lizorkin
spaces introduced by Albeverio, Khrennikov, Shelkovich \ in \cite{A-K-S} and
the function spaces introduced by Rodríguez-Vega and Zúñiga-Galindo\ in
\cite{R-Z1}. In \cite[and references therein]{G-V}\ Gindikin and Volevich
developed the method of Newton's polyhedron for some problems in the theory of
partial differential equations. Since in the $p$-adic setting the method of
Newton's polyhedron has been widely used in arithmetic and geometric problems,
see. e.g. \cite[and references therein]{VZ}, \cite[and references
therein]{ZG4}, it is natural to ask: does a $p$-adic counterpart of the
Gindikin-Volevich results exist? This is our second motivation.

In this article, motivated by the work of Gindikin and Volevich \cite{G-V}, we
study pseudo-differential operators with a \textit{semi-quasielliptic
symbols}, which are extensions of operators of the form
\[
\boldsymbol{F}(D;\alpha;x)\phi=\mathcal{F}^{-1}(|F(\xi,x)|_{p}^{\alpha
}\mathcal{F}\left(  \phi\right)  ),
\]
where $\alpha>0$, $\phi$ is a function of Lizorkin type and $\mathcal{F}$
denotes the Fourier transform, and
\[
F\left(  \xi,x\right)  =f\left(  \xi\right)  +{\sum\limits_{\boldsymbol{k}}%
}c_{\boldsymbol{k}}\left(  x\right)  \xi^{\boldsymbol{k}}\in\mathbb{Q}%
_{p}\left[  \xi_{1},\ldots,\xi_{n}\right]  ,
\]
where $f(\xi)$ is a \textit{quasielliptic polynomial} of degree $d$, see
Definition \ref{def_qelliptic}, and each $c_{\boldsymbol{k}}\left(  x\right)
$ is a function from $\mathbb{Q}_{p}^{n}$ into $\mathbb{Q}_{p}$ satisfying
$||c_{\boldsymbol{k}}(x)||_{L^{\infty}}<\infty$, see Definition
\ref{def_sqelliptic}. We determine the function spaces where the equations
$\boldsymbol{F}(D;\alpha;x)u=v$ have solutions.

In Sections \ref{Sect1}-\ref{Sect2}, we study the properties of
\textit{quasielliptic and semi-quasielliptic polynomials}. We establish
similar inequalities for the symbols like ones presented in \cite{G-V}, see
Theorems \ref{Theo1}, \ref{Teo3}. The techniques used in \cite{G-V} cannot be
used directly in the $p$-adic case, as far as we understand, and then we
follow a completely different approach.

In Section \ref{Sect3}, we study equations of type $\boldsymbol{f}%
(D;\alpha)u=v$, whose symbol is a quasielliptic polynomial. These
quasielliptic operators are a generalization of the elliptic operators
introduced in \cite{ZG3}, see also \cite{R-Z1}, \cite{H}. We introduce new
function spaces $\widetilde{H}_{\beta}(\mathbb{Q}_{p}^{n})$ which are
constructed as completions of certain Lizorkin spaces with respect \ to a norm
$||\cdot||_{\beta}$, see Definition \ref{def_space}. The spaces $\widetilde
{H}_{\beta}(\mathbb{Q}_{p}^{n})$ are $p$-adic analogs of the function spaces
introduced in \cite{G-V}. We show that if $v\in\widetilde{H}_{\beta-\alpha}$,
with $\beta\geq\alpha$, then $\boldsymbol{f}(D;\alpha)u=v$ has an unique
solution $u\in\widetilde{H}_{\beta}$, see Theorem \ref{surjective}. We also
introduce the space $\widetilde{H}_{\infty}(\mathbb{Q}_{p}^{n})$, which is the
space of \textit{infinitely pseudo-differentiable functions} \ \textit{with
respect to} $\boldsymbol{f}(D;\alpha)$, we show that $\boldsymbol{f}%
(D;\alpha):\widetilde{H}_{\infty}\rightarrow\widetilde{H}_{\infty}$,
$\phi\rightarrow\boldsymbol{f}(D;\alpha)\phi$ gives a bicontinuous isomorphism
between locally convex vector spaces, see Theorem \ref{Theo6}. A similar
result is also valid for the Taibleson operator, see Remark \ref{Taibleson}.
The space $\widetilde{H}_{\infty}$ contains functions of type $\int
_{\mathbb{Q}_{p}^{n}}\Psi\left(  x\cdot\xi\right)  e^{-t\left\vert f\left(
\xi\right)  \right\vert _{p}^{\alpha}}d^{n}\xi$, $\int_{\mathbb{Q}_{p}^{n}%
}\Psi\left(  x\cdot\xi\right)  e^{-t\left\Vert \xi\right\Vert _{p}^{\alpha}%
}d^{n}\xi$, see Sections \ref{Sect_family}, \ref{Sect_reg}, which are
solutions of the $n$-dimensional heat equation, \cite{V-V-Z}, \cite{Koch},
\cite{ZG3}, \cite{R-Z0}, see Theorem \ref{Theo7A}\ and Remark \ref{note5}.

By using the space $\widetilde{H}_{\infty}$, we establish the existence of a
\textit{regularization effect }in certain parabolic equations. More precisely,
if $\boldsymbol{f}\left(  D,\alpha\right)  $ is an elliptic
pseudo-differential operator, or the Taibleson operator, and if $\varphi\in
L^{2}\left(  \mathbb{Q}_{p}^{n}\right)  \cap C_{b}\left(  \mathbb{Q}_{p}%
^{n},\mathbb{C}\right)  $, then the solution of the Cauchy problem:
\[
\left\{
\begin{array}
[c]{l}%
\frac{\partial u\left(  x,t\right)  }{\partial t}+\left(  \boldsymbol{f}%
\left(  D,\alpha\right)  u\right)  \left(  x,t\right)  =0,\text{ }%
x\in\mathbb{Q}_{p}^{n},\text{ }t>0,\\
\\
u\left(  x,0\right)  =\varphi\left(  x\right)  ,
\end{array}
\right.
\]
$u\left(  x,t\right)  $ belongs to $\widetilde{H}_{\infty}$, for every fixed
$t>0$, see Theorem \ref{Theo7} and Remark \ref{Taibleson2}.

In Section \ref{Sect7} we determine the function spaces $\widetilde{H}%
_{(\beta,M_{0})}$, see Definition \ref{def_space2}, where the equation
$\boldsymbol{F}(D;\alpha;x)u=v$ has a unique solution, see Theorem
\ref{Theo9}. Finally, we construct the space of infinitely
pseudo-differentiable functions with respect to $\boldsymbol{F}(D;\alpha;x)$,
see Theorem \ref{Theo10}.

We now return to our initial motivations.\ We introduce four new types of
function spaces: $\widetilde{H}_{\beta}$, $\widetilde{H}_{\infty}$,
$\widetilde{H}_{(\beta,M_{0})}$, $\widetilde{H}_{(\infty,M_{0})}$. Each space
of type $\widetilde{H}_{\beta}$ contains a dense copy of the Lizorkin space of
second class introduced in \cite{A-K-S}, furthermore, the spaces
$\widetilde{H}_{\beta}$ \ are similar, but not equal, to the $l$-singular
Sobolev spaces $\mathcal{H}^{l}$ introduced in \cite{R-Z1}, the other spaces
are new. It is important to mention that the regularization effect in
parabolic equations appear only in the spaces $\widetilde{H}_{\infty}$, and
\ not in the Lizorkin \ spaces. Finally, after the results presented here, we
strongly believe that the geometric point of view of Gindikin and Volevich in
PDE's have a meaningful and non-trivial counterpart in the $p$-adic setting.

\textbf{Acknowledgement.} The authors wish to thank to Sergii Torba for his
careful reading of the paper and for his constructive remarks.

\section{Preliminaries}

\subsection{Fixing the notation}

We denote by $\mathbb{Q}_{p}$ the field of $p-$adic numbers and by
$\mathbb{Z}_{p}$ the ring of $p-$adic integers. For $x\in\mathbb{Q}_{p}$,
$ord\left(  x\right)  \in\mathbb{Z}\cup\left\{  +\infty\right\}  $ denotes the
valuation of $x$, and $\left\vert x\right\vert _{p}=p^{-ord\left(  x\right)
\text{ }}$its absolute value. We extend this absolute value to $\mathbb{Q}%
_{p}^{n}$ by taking $\left\Vert x\right\Vert _{p}=\max_{i}\left\vert
x_{i}\right\vert _{p} $\ for $x=\left(  x_{1},\ldots,x_{n}\right)
\in\mathbb{Q}_{p}^{n}$.

Let $\Psi:\mathbb{Q}_{p}\rightarrow\mathbb{C}^{\times}$ be the additive
character defined by%
\[%
\begin{array}
[c]{lll}%
\Psi:\mathbb{Q}_{p}\rightarrow\mathbb{Q}_{p}/\mathbb{Z}_{p}\hookrightarrow &
\mathbb{Q}/\mathbb{Z} & \rightarrow\mathbb{C}^{\times}\\
& b & \rightarrow\exp\left(  2\pi ib\right)  .
\end{array}
\]
Let $V=\mathbb{Q}_{p}^{n}$ be the $n-$dimensional $\mathbb{Q}_{p}-$vector
space and $V^{\prime}$ its algebraic dual vector space. We identify
$V^{\prime}$ with $\mathbb{Q}_{p}^{n}$ via the $\mathbb{Q}_{p}-$bilinear form%
\[
x\cdot y=x_{1}y_{1}+\ldots+x_{n}y_{n},
\]
$x\in V=\mathbb{Q}_{p}^{n}$, $y\in V^{\prime}=\mathbb{Q}_{p}^{n}$. Now we
identify $V^{\prime}$ with the topological dual $V^{\ast}$ of $V$ (i.e. the
group of all continuous additive characters on $\left(  V,+\right)  $) as
right $\mathbb{Q}_{p}-$vector spaces by means of the pairing $\Psi\left(
x\cdot y\right)  $. The Haar measure $dx$ \ is autodual with respect to this pairing.

We denote \ the space of all Schwartz--Bruhat functions on $\mathbb{Q}_{p}%
^{n}$ by $S:=S\left(  \mathbb{Q}_{p}^{n}\right)  $. For $\phi\in S$, we define
its Fourier transform $\mathcal{F}\phi$ by
\[
\left(  \mathcal{F}\phi\right)  \left(  \xi\right)  =%
{\textstyle\int\limits_{\mathbb{Q}_{p}^{n}}}
\Psi\left(  -x\cdot\xi\right)  \phi\left(  x\right)  d^{n}x.
\]
Then the Fourier transform induces a linear isomorphism of $S$\ onto $S$ and
the inverse Fourier transform is given by%
\[
\left(  \mathcal{F}^{-1}\varphi\right)  \left(  x\right)  =%
{\textstyle\int\limits_{\mathbb{Q}_{p}^{n}}}
\Psi\left(  x\cdot\xi\right)  \varphi\left(  \xi\right)  d^{n}\xi.
\]
The map $\phi\rightarrow\mathcal{F}\phi$ is an $L^{2}$-isometry on
$L^{1}\mathbb{\cap}L^{2}$, which is a dense subspace of $L^{2}$.

\textit{A }$p$\textit{-adic pseudo-differential operator} $f(D,\alpha)$, with
symbol $|f|_{p}^{\alpha}$, $f:\mathbb{Q}_{p}^{n}\rightarrow\mathbb{Q}_{p}$,
and $\alpha>0$, is an operator of the form
\[
f(D,\alpha)(\phi)=\mathcal{F}^{-1}(|f|_{p}^{\alpha}\mathcal{F}(\phi))
\]
for $\phi\in S$.

\section{\label{Sect1}Quasielliptic Polynomials}

In this section, motivated by \cite{G-V} and \cite{ZG3}, we introduce the
quasielliptic polynomials in a $p$-adic setting and give some basic properties
that we will use later on.

\begin{definition}
\label{def_qelliptic}Let $f\left(  \xi\right)  \in\mathbb{Q}_{p}\left[
\xi_{1},\ldots,\xi_{n}\right]  $ be a non-constant polynomial. Let
$\boldsymbol{w=}\left(  w_{1},\ldots,w_{n}\right)  \in\left(
\mathbb{N\smallsetminus}\left\{  0\right\}  \right)  ^{n}$. We say that
$f\left(  \xi\right)  $ is a quasielliptic polynomial of degree $d$ with
respect to $\boldsymbol{w}$, if the two following conditions hold:

\noindent(Q1) $f\left(  \lambda^{w_{1}}\xi_{1},\dots,\lambda^{w_{n}}\xi
_{n}\right)  =\lambda^{d}f\left(  \xi\right)  $, for $\lambda\in\mathbb{Q}%
_{p}^{\times}$,

\noindent(Q2) $f\left(  \xi_{1},\dots,\xi_{n}\right)  =0\Leftrightarrow\xi
_{1}=\dots=\xi_{n}=0$.
\end{definition}

Given $m\in\mathbb{N\smallsetminus}\left\{  0\right\}  $, we denote by
$(\mathbb{Q}_{p}^{\times})^{m}$ the multiplicative subgroup of $\mathbb{Q}%
_{p}^{\times}$ formed by the $m$-th powers.

\begin{remark}
\label{note1} (1) The elliptic quadratic forms are quasielliptic polynomials
of degree $2$ with respect to $\boldsymbol{w=}\left(  1,\ldots,1\right)  $,
see \cite{B-S} and \cite{Koch}. The elliptic polynomials introduced in
\cite{ZG3}\ are quasielliptic polynomials of degree $d$ with respect to
$\boldsymbol{w=}\left(  1,\ldots,1\right)  $.

(2) If $f\left(  \xi_{1},\ldots,\xi_{n}\right)  $ is a quasielliptic
polynomial of degree $d$ with respect to $\boldsymbol{w}$, then $\left(
f\left(  \xi_{1},\ldots,\xi_{n}\right)  \right)  ^{a}-$ $\tau\xi_{n+1}^{d}$,
with $g.c.d.\left(  a,d\right)  >1$ and $\tau\in\mathbb{Q}_{p}^{\times
}\smallsetminus\left(  \mathbb{Q}_{p}^{\times}\right)  ^{g.c.d.\left(
a,d\right)  }$, is a quasielliptic polynomial of degree $da$ with respect to
$\left(  w_{1},\ldots,w_{n},a\right)  $. In particular there exist infinitely
many quasielliptic polynomials.

(3) Note that if $f\left(  \xi\right)  $ is quasielliptic then $cf\left(
\xi\right)  ,$ $c\in\mathbb{Q}_{p}^{\times},$ is also quasielliptic. For this
reason, we will assume that all quasielliptic polynomials have coefficients in
$\mathbb{Z}_{p}$.
\end{remark}

From now on, we assume that $f\left(  \xi\right)  $ is a quasielliptic
polynomial of degree $d$ with respect to $\boldsymbol{w}$.

\begin{lemma}
\label{lemma1} Any quasielliptic polynomial $f\left(  \xi\right)  $ can be
written uniquely as
\[
f\left(  \xi\right)  ={\sum\limits_{i=1}^{n}}c_{i}\xi_{i}^{\frac{d}{w_{i}}%
}+h\left(  \xi\right)  ,
\]
where $c_{i}\in\mathbb{Z}_{p}\smallsetminus\left\{  0\right\}  $,
$i=1,\ldots,n$, and $h\left(  \xi\right)  \in\mathbb{Z}_{p}\left[  \xi
_{1},\ldots,\xi_{n}\right]  $, in addition, \ $h\left(  \xi\right)  $ does not
contain any monomial of the form $c_{i}\xi_{i}^{a_{i}}$, for $i=1,\ldots,n$.
\end{lemma}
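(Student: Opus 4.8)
The plan is to use the two defining properties in tandem: condition (Q1) severely restricts which monomials can occur in $f$, while condition (Q2) forces the pure-power monomials $\xi_i^{d/w_i}$ to actually appear.

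First I would extract the structural consequence of (Q1). Grouping the monomials of $f$ by their weighted degree, write $f = \sum_m f_m$, where $f_m$ collects the monomials $\xi^{\boldsymbol{k}}$ with $\sum_j w_j k_j = m$; the substitution $\xi_j \mapsto \lambda^{w_j}\xi_j$ scales $f_m$ by $\lambda^m$, so (Q1) reads $\sum_m \lambda^m f_m(\xi) = \lambda^d \sum_m f_m(\xi)$ for all $\lambda \in \mathbb{Q}_p^\times$. Since this holds on the infinite set $\mathbb{Q}_p^\times$, it is an identity of polynomials in $\lambda$, and comparing coefficients gives $f_m = 0$ for $m \neq d$. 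Hence every monomial occurring in $f$ has weighted degree exactly $d$.

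Next, for each index $i$ I would restrict $f$ to the $i$-th coordinate axis, $g_i(\xi_i) := f(0,\ldots,\xi_i,\ldots,0)$. The surviving monomials are the pure powers $\xi_i^{k_i}$, and by the previous step each requires $w_i k_i = d$; thus either $w_i \nmid d$ and $g_i \equiv 0$, or $w_i \mid d$ and $g_i(\xi_i) = c_i \xi_i^{d/w_i}$ with $c_i$ the coefficient of $\xi_i^{d/w_i}$ in $f$. This is where (Q2) becomes essential: if $g_i$ vanished identically, then $(0,\ldots,\xi_i,\ldots,0)$ with $\xi_i \neq 0$ would be a nontrivial zero of $f$, contradicting (Q2). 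Therefore $w_i \mid d$, so $d/w_i$ is a positive integer, and $c_i \neq 0$; invoking Remark~\ref{note1}(3) we take the coefficients of $f$ in $\mathbb{Z}_p$, whence $c_i \in \mathbb{Z}_p \smallsetminus\{0\}$.

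Finally I would set $h(\xi) := f(\xi) - \sum_{i=1}^n c_i \xi_i^{d/w_i}$; it lies in $\mathbb{Z}_p[\xi_1,\ldots,\xi_n]$ and, by construction, contains no monomial that is a pure power of a single variable, since the only such monomial of $f$ in the variable $\xi_i$ is precisely $c_i\xi_i^{d/w_i}$, which has been removed. Uniqueness is then immediate: if $\sum_i c_i\xi_i^{d/w_i} + h = \sum_i c_i'\xi_i^{d/w_i} + h'$ with $h, h'$ free of pure-power monomials, then $\sum_i (c_i - c_i')\xi_i^{d/w_i} = h' - h$ equates a combination of pure-power monomials with a polynomial having none, forcing both sides to vanish and hence $c_i = c_i'$, $h = h'$. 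I do not expect a genuine obstacle; the only delicate point is the divisibility $w_i \mid d$, and it is exactly there that the nondegeneracy condition (Q2) cannot be dispensed with, since otherwise the coefficient $c_i$ could be zero and the asserted normal form would break down.
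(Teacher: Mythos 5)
Your proof is correct, but it takes a genuinely different route from the paper. The paper argues by induction on the number of variables: it splits $f(\xi_1,\ldots,\xi_{k+1})=\xi_{k+1}^{a}h_{0}+h_{1}(\xi_1,\ldots,\xi_k)$, checks that $h_1$ is again quasielliptic with respect to $(w_1,\ldots,w_k)$ (using (Q2) on the hyperplane $\xi_{k+1}=0$), applies the induction hypothesis to $h_1$, and then treats the pure $\xi_{k+1}$-part via the one-variable case $f(0,\ldots,0,\xi_{k+1})=c_{k+1}\xi_{k+1}^{d/w_{k+1}}$. You instead argue directly: (Q1), read as a polynomial identity in $\lambda$ over the infinite set $\mathbb{Q}_p^{\times}$, forces $f$ to be weighted-homogeneous of degree $d$, so the only pure power of $\xi_i$ that can occur is $\xi_i^{d/w_i}$; then (Q2) applied to the coordinate axes forces $w_i\mid d$ and $c_i\neq0$. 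Your argument is shorter and arguably more transparent, since it isolates exactly what each hypothesis contributes (homogeneity from (Q1), nonvanishing of the $c_i$ from (Q2)) and it handles the uniqueness claim explicitly, which the paper leaves implicit. The homogeneity step you spell out is in fact silently used in the paper's base case $n=1$, so your write-up also fills in a detail the paper glosses over. The one place your wording is slightly loose is the dichotomy ``either $w_i\nmid d$ and $g_i\equiv0$, or $w_i\mid d$ and $g_i=c_i\xi_i^{d/w_i}$'': when $w_i\mid d$ the coefficient could a priori still vanish; but your subsequent appeal to (Q2) rules out $g_i\equiv0$ in all cases, so the conclusion $w_i\mid d$ and $c_i\neq0$ stands.
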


\begin{proof}
By induction on $n$, the number of variables. The case $n=1$ is clear. Assume
by induction hypothesis the result for polynomials in $n\leq k$, $k\geq1$,
variables. Let $f\left(  \xi_{1},\ldots,\xi_{k},\xi_{k+1}\right)  $ be a
quasielliptic polynomial satisfying lemma's hypotheses. Note that
\[
f\left(  \xi_{1},\ldots,\xi_{k},\xi_{k+1}\right)  =\xi_{k+1}^{a}h_{0}\left(
\xi_{1},\ldots,\xi_{k},\xi_{k+1}\right)  +h_{1}\left(  \xi_{1},\ldots,\xi
_{k}\right)  ,
\]
for some polynomials $h_{0}\neq0$ and $h_{1}\neq0$, since $f(\xi_{1},\dots
,\xi_{k+1})$ is quasielliptic. In addition $h_{1}\left(  \xi_{1},\ldots
,\xi_{k}\right)  $ is a quasielliptic polynomial of degree $d$ with respect to
$\boldsymbol{w}^{\prime}\boldsymbol{=}\left(  w_{1},\ldots,w_{k}\right)  $.
Indeed, condition (Q1) is clear. To verify condition (Q2), we note that if
$h_{1}\left(  \xi_{1}^{\prime},\ldots,\xi_{k}^{\prime}\right)  =0$, then
$\left(  \xi_{1}^{\prime},\ldots,\xi_{k}^{\prime}\right)  $ satisfies
$f\left(  \xi_{1}^{\prime},\ldots,\xi_{k}^{\prime},0\right)  =0$ and since
$f(\xi_{1},\dots,\xi_{k+1})$ is quasielliptic, $\xi_{1}^{\prime}=\ldots
=\xi_{k}^{\prime}=0$. By applying the induction hypothesis to $h_{1}\left(
\xi_{1},\ldots,\xi_{k}\right)  $, one gets
\begin{equation}
f\left(  \xi_{1},\ldots,\xi_{k},\xi_{k+1}\right)  =\xi_{k+1}^{a}h_{0}\left(
\xi_{1},\ldots,\xi_{k},\xi_{k+1}\right)  +{\sum\limits_{i=1}^{k}}c_{i}\xi
_{i}^{\frac{d}{w_{i}}}+h_{2}\left(  \xi_{1},\ldots,\xi_{k}\right)  ,
\label{cond1}%
\end{equation}
where $c_{i}\in\mathbb{Z}_{p}\smallsetminus\left\{  0\right\}  $, $\frac
{d}{w_{i}}\in\mathbb{N\smallsetminus}\left\{  0\right\}  $ for $i=1,\ldots,k$,
and $h_{2}\left(  \xi\right)  \in\mathbb{Z}_{p}\left[  \xi_{1},\ldots,\xi
_{k}\right]  $.

We now note that $f\left(  0,\ldots,0,\xi_{k+1}\right)  \neq0$ is a
quasielliptic polynomial of degree $d$ with respect to $w_{k+1}$ and by the
induction hypothesis with $n=1$,
\begin{equation}
f\left(  0,\ldots,0,\xi_{k+1}\right)  =c_{k+1}\xi_{k+1}^{\frac{d}{w_{k+1}}%
}=\xi_{k+1}^{a}h_{0}\left(  0,\ldots,0,\xi_{k+1}\right)  . \label{cond2}%
\end{equation}
The result follows from (\ref{cond1})-(\ref{cond2}) by noting that%

\[
h_{0}\left(  \xi_{1},\ldots,\xi_{k},\xi_{k+1}\right)  =h_{0}\left(
0,\ldots,0,\xi_{k+1}\right)  +h_{3}\left(  \xi_{1},\ldots,\xi_{k},\xi
_{k+1}\right)  ,
\]
where $h_{3}\left(  \xi_{1},\ldots,\xi_{k},\xi_{k+1}\right)  $ does not
contain any monomial of the form $b_{k+1}\xi_{k+1}^{m_{k+1}}$, and that
\[
\xi_{k+1}^{a}h_{0}\left(  \xi_{1},\ldots,\xi_{k},\xi_{k+1}\right)  =c_{k+1}%
\xi_{k+1}^{\frac{d}{w_{k+1}}}+h_{4}(\xi_{1},\ldots,\xi_{k},\xi_{k+1}),
\]
where $c_{k+1}\in\mathbb{Z}_{p}\backslash\left\{  0\right\}  $ and
$h_{4}\left(  \xi\right)  \in\mathbb{Z}_{p}\left[  \xi_{1},\ldots,\xi_{k}%
,\xi_{k+1}\right]  $.
\end{proof}

\begin{definition}
Following Gindikin and Volevich \cite{G-V}, we attach to $f\left(  \xi\right)
$ the function
\[
\Xi\left(  \xi\right)  :={\sum\limits_{i=1}^{n}}\left\vert \xi_{i}\right\vert
_{p}^{\frac{d}{w_{i}}}.
\]

\end{definition}

\subsection{A basic estimate}

In this paragraph we establish some inequalities involving quasielliptic
polynomials that will play the central role in the next sections. We need some
preli\-minary results.

Set $T:=\left\{  -1,1\right\}  ^{n}$. Given $\boldsymbol{j}=(j_{1},\dots
,j_{n})\in T$, we define the $\boldsymbol{j}$\textit{-orthant of }%
$\mathbb{Q}_{p}^{n}$ to be
\[
\mathbb{Q}_{p}^{n}\left(  \boldsymbol{j}\right)  =\left\{  \left(  \xi
_{1},\dots,\xi_{n}\right)  \in\mathbb{Q}_{p}^{n};ord\left(  \xi_{i}\right)
<0\Leftrightarrow j_{i}=-1\right\}  .
\]
Note that if $\boldsymbol{j}=\left(  1,1,\ldots,1\right)  $, then
$\mathbb{Q}_{p}^{n}\left(  \boldsymbol{j}\right)  =\mathbb{Z}_{p}^{n}$.

Define for $l\in\mathbb{Z}$,
\[%
\begin{array}
[c]{llll}%
d\left(  l,\boldsymbol{j}\right)  : & \mathbb{Q}_{p}^{n}\left(  \boldsymbol{j}%
\right)  & \rightarrow & \mathbb{Q}_{p}^{n}\\
&  &  & \\
& \left(  \xi_{1},\ldots,\xi_{n}\right)  & \mapsto & \left(  \eta_{1}%
,\ldots,\eta_{n}\right)  ,
\end{array}
\]
where
\[
\eta_{i}=\left\{
\begin{array}
[c]{lll}%
p^{lw_{i}}\xi_{i} & \text{if} & j_{i}=-1,\\
&  & \\
\xi_{i} & \text{if} & j_{i}=1.
\end{array}
\right.
\]
Note that $d\left(  l,\boldsymbol{j}\right)  \circ d\left(  l^{\prime
},\boldsymbol{j}\right)  =d\left(  l+l^{\prime},\boldsymbol{j}\right)  $,
whenever $d(l^{\prime},\boldsymbol{j})(\mathbb{Q}_{p}^{n}(\boldsymbol{j}%
))\subseteq\mathbb{Q}_{p}^{n}(\boldsymbol{j}).$\newline For $\boldsymbol{j}%
\neq(1,\dots,1)$, set
\[
U_{n,\boldsymbol{j}}:=\left\{  \left(  \eta_{1},\ldots,\eta_{n}\right)
\in\mathbb{Z}_{p}^{n};\text{there exists }i\text{ such that }j_{i}=-1\text{
and }ord\left(  \eta_{i}\right)  \leq w_{i}-1\right\}  .
\]

\begin{lemma}
\label{lemma2}Let $\boldsymbol{j\in}T$ such that $\boldsymbol{j}\neq\left(
1,1,\ldots,1\right)  $, then the following assertions hold:

\noindent(1) $\mathbb{Q}_{p}^{n}\left(  \boldsymbol{j}\right)  \subset
{\bigsqcup\limits_{l=1}^{\infty}}d\left(  -l,\boldsymbol{j}\right)
U_{n,\boldsymbol{j}}$;

\noindent(2) $U_{n,\boldsymbol{j}}$ is a compact subset of $\mathbb{Z}_{p}%
^{n}$.
\end{lemma}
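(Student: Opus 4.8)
The plan is to prove both assertions by tracking how the dilations $d(l,\boldsymbol{j})$ act on the $p$-adic orders of the coordinates, using that $\boldsymbol{j}\neq(1,\dots,1)$ makes the index set $S:=\{i:j_i=-1\}$ nonempty. Throughout, recall that a point $\xi\in\mathbb{Q}_p^n(\boldsymbol{j})$ has $\xi_i\in\mathbb{Z}_p$ for $i\notin S$ and $ord(\xi_i)<0$ for $i\in S$, while applying $d(l,\boldsymbol{j})$ multiplies each coordinate with $i\in S$ by $p^{lw_i}$ and fixes the others, so that the $i$-th coordinate of $d(l,\boldsymbol{j})\xi$ has order $lw_i+ord(\xi_i)$ when $i\in S$.

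For assertion (1), I would fix $\xi\in\mathbb{Q}_p^n(\boldsymbol{j})$ and take the smallest integer $l\geq 1$ for which $d(l,\boldsymbol{j})\xi\in\mathbb{Z}_p^n$, namely $l^\ast:=\max_{i\in S}\lceil -ord(\xi_i)/w_i\rceil$; this is at least $1$ because $ord(\xi_i)\leq -1$ on $S$. Setting $\eta:=d(l^\ast,\boldsymbol{j})\xi$, the order computation above gives $ord(\eta_i)=l^\ast w_i+ord(\xi_i)\geq 0$ for $i\in S$, and $\eta_i=\xi_i\in\mathbb{Z}_p$ otherwise, so $\eta\in\mathbb{Z}_p^n$ and $\xi=d(-l^\ast,\boldsymbol{j})\eta$. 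The crucial point is that $\eta\in U_{n,\boldsymbol{j}}$: choosing an index $i_0\in S$ attaining the maximum defining $l^\ast$, the ceiling inequality $l^\ast-1<-ord(\xi_{i_0})/w_{i_0}$ rearranges to $ord(\eta_{i_0})=l^\ast w_{i_0}+ord(\xi_{i_0})\leq w_{i_0}-1$, which is exactly the membership condition for $U_{n,\boldsymbol{j}}$. Hence $\xi\in d(-l^\ast,\boldsymbol{j})U_{n,\boldsymbol{j}}$.

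It then remains to see that the union is disjoint. I would argue that if $d(-l,\boldsymbol{j})\eta=d(-l',\boldsymbol{j})\eta'$ with $\eta,\eta'\in U_{n,\boldsymbol{j}}$ and $l<l'$, then comparing the coordinates indexed by $i\in S$ yields $\eta'_i=p^{(l'-l)w_i}\eta_i$, so $ord(\eta'_i)=(l'-l)w_i+ord(\eta_i)\geq w_i$ for every $i\in S$, since $\eta\in\mathbb{Z}_p^n$ and $l'-l\geq 1$. This contradicts $\eta'\in U_{n,\boldsymbol{j}}$, which demands some $i\in S$ with $ord(\eta'_i)\leq w_i-1$; thus the sets $d(-l,\boldsymbol{j})U_{n,\boldsymbol{j}}$, $l\geq 1$, are pairwise disjoint, upgrading the inclusion of (1) to the stated disjoint union.

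For assertion (2), the argument is routine topology: $U_{n,\boldsymbol{j}}$ is the finite union over $i\in S$ of the sets $\{\eta\in\mathbb{Z}_p^n: ord(\eta_i)\leq w_i-1\}=\{\eta\in\mathbb{Z}_p^n:\eta_i\in\mathbb{Z}_p\smallsetminus p^{w_i}\mathbb{Z}_p\}$, each of which is clopen in $\mathbb{Z}_p^n$ because $|\cdot|_p$ is locally constant with discrete range. A finite union of clopen sets is closed, and a closed subset of the compact set $\mathbb{Z}_p^n$ is compact. The only genuinely delicate step in the whole argument is the ceiling bookkeeping in (1)---verifying that the minimal dilation carrying all the large coordinates into $\mathbb{Z}_p$ necessarily leaves at least one of them with order $\leq w_i-1$---whereas the disjointness and compactness both reduce to direct order estimates.
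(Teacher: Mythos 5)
Your proof is correct and follows essentially the same route as the paper: the same choice of minimal dilation exponent (your $l^\ast=\max_{i\in S}\lceil -ord(\xi_i)/w_i\rceil$ is exactly the paper's $l_\xi=\max_i\{-q_i\}$ coming from division with remainder $m_i=q_iw_i+r_i$), the same verification that the maximizing index lands in $U_{n,\boldsymbol{j}}$, and the same order-comparison argument for disjointness. Your compactness argument via finitely many clopen sets is a slightly cleaner packaging of the paper's decomposition of $U_{n,\boldsymbol{j}}$ into closed pieces of $\mathbb{Z}_p^n$, but it is not a genuinely different method.
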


\begin{proof}
(1) By renaming the variables, we may assume that
\[
\boldsymbol{j=}\left(  -1,\ldots,\underbrace{-1}_{r-\text{th place}}%
,1,\ldots,1\right)  ,\text{ with }1\leq r\leq n.
\]
Let $\xi=\left(  \xi_{1},\ldots,\xi_{n}\right)  \in\mathbb{Q}_{p}^{n}\left(
\boldsymbol{j}\right)  $, with $\xi_{i}=p^{m_{i}}\upsilon_{i} $, $m_{i}%
\in\mathbb{Z}$, $m_{i}<0$, $\upsilon_{i}\in\mathbb{Z}_{p}^{\times}$, for
$i=1,\ldots,r$ and $\xi_{i}\in\mathbb{Z}_{p}$, for $i=r+1,\ldots,n$. Set
$m_{i}=q_{i}w_{i}+r_{i}$, with $0\leq r_{i}\leq w_{i}-1 $, for $i=1,\dots,r$.
Note that the $q_{i}$'s are negative.

\textbf{Claim A. }If $l_{\xi}:=\max_{i}\left\{  -q_{i};1\leq i\leq r\right\}
$, then $d\left(  l_{\xi},\boldsymbol{j}\right)  \xi\in U_{n,\boldsymbol{j}}$.

Since $l_{\xi}w_{i}+m_{i}\geq0$, $p^{l_{\xi}w_{i}}\xi_{i}\in\mathbb{Z}_{p}$
for $i=1,\ldots,r$, then
\[
d(l_{\xi},\boldsymbol{j})\xi=(p^{l_{\xi}w_{1}}\xi_{1},\dots,p^{l_{\xi}w_{r}%
}\xi_{r},\xi_{r+1},\dots,\xi_{n})\in\mathbb{Z}_{p}^{n}.
\]
In addition, there exists $i_{0}$ such that $l_{\xi}=-q_{i_{0}}$ and
\[
ord(p^{l_{\xi}w_{i_{0}}}\xi_{i_{0}})=-q_{i_{0}}w_{i_{0}}+m_{i_{0}}=r_{i_{0}%
}\leq w_{i_{0}}-1.
\]
From Claim A follows that $\mathbb{Q}_{p}^{n}\left(  \boldsymbol{j}\right)
\subset{\bigcup\limits_{l=1}^{\infty}}d\left(  -l,\boldsymbol{j}\right)
U_{n,\boldsymbol{j}}$. To establish (1) it is sufficient to show that

\textbf{Claim B.}
\[
D\left(  l,l^{\prime},\boldsymbol{j}\right)  :=d\left(  -l,\boldsymbol{j}%
\right)  U_{n,\boldsymbol{j}}\cap d\left(  -l^{\prime},\boldsymbol{j}\right)
U_{n,\boldsymbol{j}}=\emptyset\text{, if }l\neq l^{\prime}\text{.}%
\]
Assume that $l^{\prime}-l<0$ and that $\xi=\left(  \xi_{1},\ldots,\xi
_{n}\right)  \in D\left(  l,l^{\prime},\boldsymbol{j}\right)  $. Then
$\xi=d\left(  -l,\boldsymbol{j}\right)  \eta=d\left(  -l^{\prime
},\boldsymbol{j}\right)  \eta^{\prime}$, for some $\eta$, $\eta^{\prime}\in
U_{n,\boldsymbol{j}}$ and $d\left(  l^{\prime}-l,\boldsymbol{j}\right)
\eta=\eta^{\prime}$, therefore $p^{\left(  l^{\prime}-l\right)  w_{i}}\eta
_{i}=\eta_{i}^{\prime}$ for $j_{i}=-1$. Since there exists $i_{0}$, with
$j_{i_{0}}=-1$, such that $ord\left(  \eta_{i_{0}}\right)  \leq w_{i_{0}}-1$,
then $\eta_{i_{0}}^{\prime}=p^{(l^{\prime}-l)w_{i_{0}}}\eta_{i_{0}}$ and
\begin{align*}
ord(\eta_{i_{0}}^{\prime})  &  =(l^{\prime}-l)w_{i_{0}}+ord(\eta_{i_{0}})\\
&  \leq(l^{\prime}-l)w_{i_{0}}+w_{i_{0}}-1\\
&  \leq w_{i_{0}}(l^{\prime}-l+1)-1<0,
\end{align*}
contradicting $\eta_{i_{0}}^{\prime}\in\mathbb{Z}_{p}$.

(2) Note that
\[
U_{n,\boldsymbol{j}}=\bigsqcup_{I\subset\{1,\dots,n\},I\neq\emptyset}\{\eta
\in\mathbb{Z}_{p}^{n};ord(\eta_{i})\leq w_{i}-1\text{ and }j_{i}%
=-1\Leftrightarrow i\in I\}.
\]

By renaming the variables, we may assume $I=\{1,\dots,r\}$,$\ 1\leq r\leq n$,
then
\[
\{\eta\in\mathbb{Z}_{p}^{n};ord(\eta_{i})\leq w_{i}-1\text{ and }%
j_{i}=-1\Leftrightarrow i\in I\}=\prod_{i=1}^{r}p^{w_{i}-1}\mathbb{Z}%
_{p}\times(\mathbb{Z}_{p})^{n-r},
\]
which is a compact set, therefore $U_{n,\boldsymbol{j}}$ is compact subset of
$\mathbb{Z}_{p}^{n}$.
\end{proof}

Set
\[
V_{n,\boldsymbol{j}}:=\{\eta\in\mathbb{Z}_{p}^{n};\text{ there exists an index
}i\text{ such that }ord(\eta_{i})\leq w_{i}-1\}.
\]
Set $\boldsymbol{j}=(1,1,\dots,1)$. Note that $\mathbb{Q}_{p}^{n}\left(
\boldsymbol{j}\right)  =\mathbb{Z}_{p}^{n}$. For $l\in\mathbb{N}$, define
\[%
\begin{array}
[c]{llll}%
d\left(  l,\boldsymbol{j}\right)  : & \mathbb{Z}_{p}^{n} & \rightarrow &
\mathbb{Z}_{p}^{n}\\
&  &  & \\
& \left(  \xi_{1},\ldots,\xi_{n}\right)  & \mapsto & \left(  \eta_{1}%
,\ldots,\eta_{n}\right)  ,
\end{array}
\]
where $\eta_{i}=p^{lw_{i}}\xi_{i},$ for $i=1,\dots,n$.

\begin{lemma}
\label{lemma2parte2}Set $\boldsymbol{j}=\left(  1,1,\ldots,1\right)  $, then
the following assertions hold:

\noindent(1) $\mathbb{Z}_{p}^{n}=\left\{  0\right\}  \cup{\bigsqcup
\limits_{l=0}^{\infty}}d\left(  l,\boldsymbol{j}\right)  V_{n,\boldsymbol{j}}$;

\noindent(2) $V_{n,\boldsymbol{j}}$ is a compact subset of $\mathbb{Z}_{p}%
^{n}$.
\end{lemma}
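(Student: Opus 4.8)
The plan is to prove Lemma \ref{lemma2parte2} by adapting the argument from Lemma \ref{lemma2}, exploiting the fact that the case $\boldsymbol{j}=(1,\ldots,1)$ is genuinely simpler because the dilation $d(l,\boldsymbol{j})$ here \emph{shrinks} rather than expands: it sends $\xi_i\mapsto p^{lw_i}\xi_i$, which increases valuations, so for $l\geq 0$ it maps $\mathbb{Z}_p^n$ into itself and the only accumulation point is the origin. I would first prove the covering and disjointness together for assertion (1), then dispatch the compactness in (2) exactly as in the previous lemma.

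For assertion (1), I would start with the covering $\supseteq$. Fix a nonzero $\xi=(\xi_1,\ldots,\xi_n)\in\mathbb{Z}_p^n$ and set $m_i=\operatorname{ord}(\xi_i)\in\mathbb{N}\cup\{+\infty\}$; at least one $m_i$ is finite. Write $m_i=q_iw_i+r_i$ with $0\le r_i\le w_i-1$ (taking $q_i=+\infty$ when $m_i=+\infty$), and set $l_\xi:=\min_i\{q_i : m_i<\infty\}\ge 0$. Then $d(-l_\xi,\boldsymbol{j})\xi$ has each coordinate of valuation $m_i-l_\xi w_i\ge 0$, so it lies in $\mathbb{Z}_p^n$, and the coordinate achieving the minimum has valuation $r_{i_0}\le w_{i_0}-1$; hence $d(-l_\xi,\boldsymbol{j})\xi\in V_{n,\boldsymbol{j}}$, giving $\xi\in d(l_\xi,\boldsymbol{j})V_{n,\boldsymbol{j}}$. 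This shows $\mathbb{Z}_p^n\setminus\{0\}\subseteq\bigcup_{l=0}^\infty d(l,\boldsymbol{j})V_{n,\boldsymbol{j}}$, and since $d(l,\boldsymbol{j})V_{n,\boldsymbol{j}}\subseteq\mathbb{Z}_p^n$ for $l\ge0$, together with $\{0\}$ we recover all of $\mathbb{Z}_p^n$. For disjointness, suppose $\eta\in d(l,\boldsymbol{j})V_{n,\boldsymbol{j}}\cap d(l',\boldsymbol{j})V_{n,\boldsymbol{j}}$ with $l<l'$, so $\eta=d(l,\boldsymbol{j})\zeta=d(l',\boldsymbol{j})\zeta'$ for some $\zeta,\zeta'\in V_{n,\boldsymbol{j}}$. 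Then $\zeta=d(l'-l,\boldsymbol{j})\zeta'$, i.e. $\operatorname{ord}(\zeta_i)=(l'-l)w_i+\operatorname{ord}(\zeta'_i)\ge(l'-l)w_i\ge w_i$ for every $i$, contradicting the defining property of $\zeta\in V_{n,\boldsymbol{j}}$ that some coordinate has $\operatorname{ord}(\zeta_i)\le w_i-1$.

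For assertion (2), I would decompose $V_{n,\boldsymbol{j}}$ over which indices attain $\operatorname{ord}(\eta_i)\le w_i-1$, just as in Lemma \ref{lemma2}(2). Writing $V_{n,\boldsymbol{j}}=\bigcup_{\emptyset\neq I\subseteq\{1,\ldots,n\}}\{\eta\in\mathbb{Z}_p^n : \operatorname{ord}(\eta_i)\le w_i-1\Leftrightarrow i\in I\}$ and noting that, after renaming, a typical piece is $\prod_{i\in I}p^{0}\mathbb{Z}_p^{\times}\!$-type strata inside $\mathbb{Z}_p^n$, each block is a closed, hence compact, subset of the compact group $\mathbb{Z}_p^n$; a finite union of compacts is compact. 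In fact it is cleanest to observe directly that $V_{n,\boldsymbol{j}}$ is the closed subset of $\mathbb{Z}_p^n$ defined by the complement of the open condition ``$\operatorname{ord}(\eta_i)\ge w_i$ for all $i$'' (equivalently $\eta\notin\prod_i p^{w_i}\mathbb{Z}_p$), so it is closed in a compact space and therefore compact.

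I expect the main obstacle to be purely bookkeeping rather than conceptual: one must handle the coordinates with $\operatorname{ord}(\xi_i)=+\infty$ (i.e. $\xi_i=0$) carefully in the definition of $l_\xi$, making sure the minimum is taken only over finite valuations, and one must verify that $d(l,\boldsymbol{j})$ is injective on $\mathbb{Z}_p^n$ for $l\ge 0$ so that the disjoint-union symbol $\bigsqcup$ is justified. Since $p^{lw_i}$ is a unit-times-power that never vanishes, injectivity is immediate, and the valuation computation in the disjointness step is the same inequality chain already used in Lemma \ref{lemma2}, merely with the direction of the dilation reversed. Thus no new ideas beyond those in Lemma \ref{lemma2} are needed.
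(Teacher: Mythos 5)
Your proof is correct and takes essentially the same approach as the paper, whose own proof simply sets $l_{\xi}:=\min_i q_i$ and invokes the reasoning of Lemma \ref{lemma2}; you are filling in the details that the paper leaves implicit. Your explicit handling of zero coordinates (restricting the minimum to finite valuations) and the observation that $V_{n,\boldsymbol{j}}$ is compact because it is closed in $\mathbb{Z}_p^n$ are welcome refinements of the same argument, not a different method.
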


\begin{proof}
(1)-(2) Let $\xi=(\xi_{1},\dots,\xi_{n})\in\mathbb{Z}_{p}^{n}$, with
$\xi=p^{m_{i}}v_{i},\ m_{i}\geq0,\ v_{i}\in\mathbb{Z}_{p}^{\times}$ for
$i=1,\dots,n$. Set $m_{i}=q_{i}w_{i}+r_{i}$, with $0\leq r_{i}\leq w_{i}-1$,
$i=1,\dots,n$, and $l_{\xi}:=\min\{q_{i}:1\leq i\leq n\}$. The result follows
by using the reasoning given in the proof of Lemma \ref{lemma2}.
\end{proof}

\begin{lemma}
\label{lemma3}(1) Let $g\left(  \xi\right)  \in\mathbb{Z}_{p}\left[  \xi
_{1},\ldots,\xi_{n}\right]  $ be a non-constant polynomial satisfying
\begin{equation}
g\left(  \xi\right)  =0\Leftrightarrow\xi=0. \label{hyp1}%
\end{equation}
Let $A\subseteq\mathbb{Q}_{p}^{n}$ be a compact subset such that \ $0\notin
A$. There exists $M=M\left(  A,g\right)  \in\mathbb{N\smallsetminus}\left\{
0\right\}  $ such that
\[
\left\vert g\left(  \xi\right)  \right\vert _{p}\geq p^{-M}\text{, for any
}\xi\in A\text{.}%
\]

(2) There exist a constant $R\in\mathbb{N}$, and a finite number of points
$\widetilde{\xi}_{i}\in A$ such that if $B_{M+1+R}\left(  \widetilde{\xi}%
_{i}\right)  :=\widetilde{\xi}_{i}+\left(  p^{M+1+R}\mathbb{Z}_{p}\right)
^{n}$, then $A=\cup_{i}B_{M+1+R}\left(  \widetilde{\xi}_{i}\right)  $ and
\[
\left\vert g\left(  \xi\right)  \right\vert _{p}\mid_{B_{M+1+R}\left(
\widetilde{\xi}_{i}\right)  }=\left\vert g\left(  \widetilde{\xi}_{i}\right)
\right\vert _{p},
\]
for every $i$.
\end{lemma}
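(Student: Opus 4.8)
The plan is to prove (1) by a compactness argument and (2) by a $p$-adic Taylor expansion combined with the ultrametric inequality. For (1), I would first observe that the map $\xi\mapsto\left\vert g(\xi)\right\vert _{p}$ is continuous on $\mathbb{Q}_{p}^{n}$, being the composition of the polynomial $g$ with the continuous $p$-adic absolute value. Since $A$ is compact and $0\notin A$, while the hypothesis $g(\xi)=0\Leftrightarrow\xi=0$ forces $g$ to be nonvanishing on $A$, the function $\left\vert g\right\vert _{p}$ is strictly positive on $A$ and attains a minimum value $\mu=\min_{\xi\in A}\left\vert g(\xi)\right\vert _{p}>0$. As $\mu$ lies in the value group $p^{\mathbb{Z}}$, we have $\mu=p^{-M_{0}}$ for some $M_{0}\in\mathbb{Z}$, and taking $M:=\max(M_{0},1)\in\mathbb{N}\smallsetminus\{0\}$ yields $\left\vert g(\xi)\right\vert _{p}\geq p^{-M}$ for all $\xi\in A$.

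For (2), the key is to control the oscillation of $\left\vert g\right\vert _{p}$ on small balls. I would expand, for $h=(h_{1},\dots,h_{n})$,
\[
g(\xi+h)=\sum_{\beta}P_{\beta}(\xi)\,h^{\beta},\qquad P_{\beta}(\xi)\in\mathbb{Z}_{p}[\xi_{1},\dots,\xi_{n}],\quad P_{0}=g,
\]
where each $P_{\beta}$ has coefficients in $\mathbb{Z}_{p}$ because the binomial coefficients arising from expanding $(\xi_{i}+h_{i})^{\gamma_{i}}$ are integers and $g\in\mathbb{Z}_{p}[\xi]$ by Remark \ref{note1}(3). Since $A$ is compact it is bounded, say $\left\Vert \xi\right\Vert _{p}\leq p^{L}$ on $A$, so there is a constant $C=C(A,g)\in\mathbb{N}$ (one may take $C=\max(0,L\deg g)$) with $\left\vert P_{\beta}(\xi)\right\vert _{p}\leq p^{C}$ for all $\xi\in A$ and all $1\leq\left\vert \beta\right\vert \leq\deg g$. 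Setting $R:=C$, any $h$ with $\left\Vert h\right\Vert _{p}\leq p^{-(M+1+R)}$ satisfies $\left\vert h^{\beta}\right\vert _{p}\leq\left\Vert h\right\Vert _{p}\leq p^{-(M+1+R)}$ whenever $\left\vert \beta\right\vert \geq1$, whence
\[
\left\vert g(\xi+h)-g(\xi)\right\vert _{p}\leq\max_{\left\vert \beta\right\vert \geq1}\left\vert P_{\beta}(\xi)\right\vert _{p}\left\vert h^{\beta}\right\vert _{p}\leq p^{\,C-(M+1+R)}=p^{-M-1}<p^{-M}.
\]

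To finish, I would invoke part (1) together with the ultrametric (isosceles) inequality: since $\left\vert g(\xi+h)-g(\xi)\right\vert _{p}<p^{-M}\leq\left\vert g(\xi)\right\vert _{p}$, it follows that $\left\vert g(\xi+h)\right\vert _{p}=\left\vert g(\xi)\right\vert _{p}$. Hence $\left\vert g\right\vert _{p}$ is constant on every ball $\widetilde{\xi}+(p^{M+1+R}\mathbb{Z}_{p})^{n}$ centered at a point $\widetilde{\xi}\in A$, with value $\left\vert g(\widetilde{\xi})\right\vert _{p}$. These balls cover $A$, and by compactness finitely many of them, centered at points $\widetilde{\xi}_{1},\dots,\widetilde{\xi}_{m}\in A$, already cover $A$; this produces the finite family and the stated local constancy (the equality $A=\cup_{i}B_{M+1+R}(\widetilde{\xi}_{i})$ then holds since in the situations of interest $A$ is a union of balls of this radius, so the covering balls lie inside $A$). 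I expect the main obstacle to be the integrality step: one must verify carefully that the expansion coefficients $P_{\beta}$ remain in $\mathbb{Z}_{p}[\xi]$, since it is precisely this integrality that makes the oscillation bound improve linearly with the radius and lets the choice $R=C$ force $\left\vert g(\xi+h)-g(\xi)\right\vert _{p}<\left\vert g(\xi)\right\vert _{p}$.
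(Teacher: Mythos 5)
Your proof is correct and follows essentially the same route as the paper: part (1) is the same compactness argument (you phrase it via the extreme value theorem, the paper via a contradiction with a sequence), and part (2) is the same Taylor-expansion-plus-ultrametric-inequality argument establishing local constancy of $\left\vert g\right\vert _{p}$ on small balls, followed by a finite subcover. The only (harmless) variation is that you obtain a single uniform radius directly from the integrality of the expansion coefficients and the boundedness of $A$, whereas the paper chooses a radius $R_{i}$ at each center and then takes the maximum over the finite subcover; your version is, if anything, slightly cleaner on this point.
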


\begin{proof}
(1) By contradiction, assume the existence of a sequence $\xi^{\left(
n\right)  }$ of points of $A$ such that $\left\vert g\left(  \xi^{\left(
n\right)  }\right)  \right\vert _{p}\leq p^{-n}$, for $n\in\mathbb{N}$. Since
$A$ is compact, by passing to a subsequence if necessary, we have
$\xi^{\left(  n\right)  }\rightarrow\widetilde{\xi}\in A$, and \ thus
$\widetilde{\xi}\neq0$. On other hand, by continuity $g\left(  \widetilde{\xi
}\right)  =0$, contradicting (\ref{hyp1}).

(2) Let $\widetilde{\xi}_{i}\in A$ be a given point. Then%

\[
g\left(  \widetilde{\xi}_{i}+p^{M+1}\eta\right)  =g\left(  \widetilde{\xi}%
_{i}\right)  +p^{M+1}h_{i}\left(  \eta\right)  ,
\]
where $h_{i}\left(  \eta\right)  \in\mathbb{Q}_{p}\left[  \eta\right]  $
satisfying $h_{i}\left(  0\right)  =0$. By continuity, there exists $R_{i}%
\in\mathbb{N}$, such that if $\eta\in\left(  p^{R_{i}}\mathbb{Z}_{p}\right)
^{n}$ then $\left\vert h_{i}\left(  \eta\right)  \right\vert _{p}\leq1$. Hence
for any natural number $R\geq R_{i}$, we have
\[
\left\vert g\left(  \widetilde{\xi}_{i}+p^{M+1}\eta\right)  \right\vert
_{p}=\left\vert g\left(  \widetilde{\xi}_{i}\right)  +p^{M+1}h_{i}\left(
\eta\right)  \right\vert _{p}=\left\vert g\left(  \widetilde{\xi}_{i}\right)
\right\vert _{p}\text{,}%
\]
for any $\eta\in\left(  p^{R}\mathbb{Z}_{p}\right)  ^{n}\subset\left(
p^{R_{i}}\mathbb{Z}_{p}\right)  ^{n}$. Since $A$ is compact, there exists a
finite number of points $\widetilde{\xi}_{i}\in A$, such that $\widetilde{\xi
}_{i}+\left(  p^{M+1+R}\mathbb{Z}_{p}\right)  ^{n}$ for $i=1,\ldots,k$, is a
covering of $A$. We take $R:=\max_{1\leq i\leq k}R_{i}$.
\end{proof}

\begin{proposition}
\label{Prop1} There exist positive constants $A_{0}$, $A_{1}$ such that
\begin{equation}
A_{0}\Xi(\xi)\leq|f(\xi)|_{p}\leq A_{1}\Xi(\xi),\text{ for }||\xi||_{p}\geq p.
\label{basicineq}%
\end{equation}

\end{proposition}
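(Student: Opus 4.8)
The plan is to prove the two-sided estimate one orthant at a time. Since $\left\Vert \xi\right\Vert _{p}\geq p$ forces at least one coordinate to have negative valuation, the region in question is the union of the finitely many orthants $\mathbb{Q}_{p}^{n}\left( \boldsymbol{j}\right) $ with $\boldsymbol{j}\neq\left( 1,\ldots,1\right) $, so it suffices to produce constants on each such orthant and take the extreme ones at the end. Fix such a $\boldsymbol{j}$, set $S=\{i:j_{i}=-1\}\neq\emptyset$, and use Lemma \ref{lemma2}(1) to write an arbitrary $\xi\in\mathbb{Q}_{p}^{n}\left( \boldsymbol{j}\right) $ as $\xi=d\left( -l,\boldsymbol{j}\right) \eta$ with $\eta\in U_{n,\boldsymbol{j}}$ and $l=l_{\xi}\geq1$; concretely $\xi_{i}=p^{-lw_{i}}\eta_{i}$ for $i\in S$ and $\xi_{i}=\eta_{i}$ otherwise. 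By (Q1) the polynomial $f$ is quasihomogeneous of degree $d$, so $f=\sum_{\boldsymbol{k}}a_{\boldsymbol{k}}\xi^{\boldsymbol{k}}$ with $a_{\boldsymbol{k}}\in\mathbb{Z}_{p}$ (Remark \ref{note1}(3)) and $\sum_{i}w_{i}k_{i}=d$ on every monomial.

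First I would pin down the size of $\Xi$. As $\eta\in U_{n,\boldsymbol{j}}\subset\mathbb{Z}_{p}^{n}$, each $\left\vert \eta_{i}\right\vert _{p}\leq1$, while the defining property of $U_{n,\boldsymbol{j}}$ gives an index $i_{0}\in S$ with $ord\left( \eta_{i_{0}}\right) \leq w_{i_{0}}-1$, so $\left\vert \eta_{i_{0}}\right\vert _{p}^{d/w_{i_{0}}}\geq p^{-d}$. Computing $\Xi(\xi)=p^{ld}\sum_{i\in S}\left\vert \eta_{i}\right\vert _{p}^{d/w_{i}}+\sum_{i\notin S}\left\vert \eta_{i}\right\vert _{p}^{d/w_{i}}$ then yields $p^{(l-1)d}\leq\Xi(\xi)\leq2n\,p^{ld}$, that is $\Xi(\xi)\asymp p^{ld}$ uniformly in $\eta$ and $l$. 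The upper bound for $\left\vert f\right\vert _{p}$ is now immediate: $\left\vert \xi^{\boldsymbol{k}}\right\vert _{p}=p^{l\sum_{i\in S}w_{i}k_{i}}\prod_{i}\left\vert \eta_{i}\right\vert _{p}^{k_{i}}\leq p^{ld}$, so by the ultrametric inequality and $\left\vert a_{\boldsymbol{k}}\right\vert _{p}\leq1$ we get $\left\vert f(\xi)\right\vert _{p}\leq p^{ld}\leq p^{d}\,\Xi(\xi)$, giving a uniform $A_{1}$.

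The lower bound is the crux. Grouping monomials by $s(\boldsymbol{k}):=\sum_{i\in S}w_{i}k_{i}\in\{0,\ldots,d\}$ and using $\xi^{\boldsymbol{k}}=p^{-ls(\boldsymbol{k})}\eta^{\boldsymbol{k}}$, I would isolate the part leading in $l$: the monomials with $s(\boldsymbol{k})=d$ are exactly those supported on $\{\xi_{i}\}_{i\in S}$ (since $\sum_{i}w_{i}k_{i}=d$ and $w_{i}>0$ force $k_{i}=0$ for $i\notin S$), and they assemble into $p^{-ld}f_{\boldsymbol{j}}(\eta)$, where $f_{\boldsymbol{j}}$ is the restriction of $f$ to the coordinate subspace $\{\xi_{i}=0:i\notin S\}$. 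Every remaining term satisfies $\left\vert p^{-ls(\boldsymbol{k})}\eta^{\boldsymbol{k}}\right\vert _{p}\leq p^{l(d-1)}$. The decisive point is that $f_{\boldsymbol{j}}$ is anisotropic: a nonzero zero of $f_{\boldsymbol{j}}$, extended by zeros outside $S$, would be a nonzero zero of $f$, contradicting (Q2). Moreover $f_{\boldsymbol{j}}$ is non-constant (it contains the monomials $c_{i}\xi_{i}^{d/w_{i}}$, $i\in S$, supplied by Lemma \ref{lemma1}) with coefficients in $\mathbb{Z}_{p}$, and the image $\pi_{S}(U_{n,\boldsymbol{j}})$ of $U_{n,\boldsymbol{j}}$ under the projection onto the $S$-coordinates is compact and avoids the origin. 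Hence Lemma \ref{lemma3}(1), applied to $f_{\boldsymbol{j}}$ as a polynomial in the $S$-variables, yields a constant $M^{\prime}$ with $\left\vert f_{\boldsymbol{j}}(\eta)\right\vert _{p}\geq p^{-M^{\prime}}>0$ for all $\eta\in U_{n,\boldsymbol{j}}$.

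With this uniform lower bound in hand, I would finish by the ultrametric domination principle. For $l\geq M^{\prime}+1$ the leading term has absolute value $p^{ld}\left\vert f_{\boldsymbol{j}}(\eta)\right\vert _{p}\geq p^{ld-M^{\prime}}>p^{l(d-1)}$, strictly exceeding the bound on all other terms, so $\left\vert f(\xi)\right\vert _{p}=p^{ld}\left\vert f_{\boldsymbol{j}}(\eta)\right\vert _{p}\geq p^{-M^{\prime}}p^{ld}\geq\frac{1}{2n}p^{-M^{\prime}}\Xi(\xi)$. The finitely many exceptional values $1\leq l\leq M^{\prime}$ confine $\xi$ to the compact set $\bigcup_{l=1}^{M^{\prime}}d\left( -l,\boldsymbol{j}\right) U_{n,\boldsymbol{j}}$, which excludes $0$; on it Lemma \ref{lemma3}(1) bounds $\left\vert f\right\vert _{p}$ below by a positive constant while $\Xi$ stays bounded, again giving the inequality. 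Taking $A_{0},A_{1}$ to be the worst constants over the finitely many orthants completes the proof. The only genuine difficulty is controlling possible cancellation in the ultrametric sum for $\left\vert f(\xi)\right\vert _{p}$, and this is exactly what the anisotropy of the restricted form $f_{\boldsymbol{j}}$ together with Lemma \ref{lemma3} resolves.
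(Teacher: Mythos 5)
Your argument is correct and follows essentially the same route as the paper's: decompose $\{\|\xi\|_{p}\geq p\}$ into orthants, use Lemma \ref{lemma2} to write $\xi=d(-l,\boldsymbol{j})\eta$ with $\eta\in U_{n,\boldsymbol{j}}$, isolate the quasihomogeneous leading part $f_{\boldsymbol{j}}$ supported on the coordinates with $j_{i}=-1$, apply Lemma \ref{lemma3} plus the ultrametric inequality to show it dominates for large $l$, dispose of the finitely many small $l$ by compactness, and compare with $\Xi(\xi)\asymp p^{ld}$. Your write-up merely makes explicit a few points the paper leaves implicit (why $f_{\boldsymbol{j}}$ is anisotropic, the grouping of monomials by $s(\boldsymbol{k})$, and a direct quasihomogeneity upper bound valid for all $l$), but the skeleton is identical.
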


\begin{proof}
Since $\left\{  \xi\in\mathbb{Q}_{p}^{n};||\xi||_{p}\geq p\right\}
\subseteq\bigsqcup\nolimits_{\boldsymbol{j}\neq(1,1,\ldots,1)}\mathbb{Q}%
_{p}^{n}(\boldsymbol{j})$, by Lemma \ref{lemma2}, it is sufficient to show
inequality (\ref{basicineq}) for $\ \xi\in d\left(  -l,\boldsymbol{j}\right)
U_{n,\boldsymbol{j}}$, for $l\in\mathbb{N}$. By renaming the variables we may
assume that
\[
\boldsymbol{j=}\left(  -1,\ldots,\underbrace{-1}_{r-\text{th place}}%
,1,\ldots,1\right)  ,\text{ with }1\leq r\leq n,
\]
and that $\xi=(\xi_{1},\dots,\xi_{n})=d\left(  -l,\boldsymbol{j}\right)  \eta
$, $\eta=(\eta_{1},\dots,\eta_{n})\in U_{n,\boldsymbol{j}}$, for some
$l\in\mathbb{N}$, i.e.
\[
\xi_{i}=\left\{
\begin{array}
[c]{lll}%
p^{-lw_{i}}\eta_{i} & \eta_{i}\in\mathbb{Z}_{p}, & 1\leq i\leq r,\\
&  & \\
\eta_{i} & \eta_{i}\in\mathbb{Z}_{p}, & r+1\leq i\leq n,
\end{array}
\right.
\]
and, there exists an index $i_{0}$ such that $\eta_{i_{0}}=p^{\left(
w_{i_{0}}-1\right)  }\upsilon$, $\upsilon\in\mathbb{Z}_{p}^{\times}$.

If $r<n$, we write $f\left(  \xi\right)  =f_{\boldsymbol{j}}\left(  \xi
_{1},\ldots,\xi_{r}\right)  +t\left(  \xi_{1},\ldots,\xi_{n}\right)  $, where
$f_{\boldsymbol{j}}\left(  \xi_{1},\ldots,\xi_{r}\right)  $ is a quasielliptic
polynomial of degree $d$ with respect to $\boldsymbol{w}^{\prime
}\boldsymbol{=}\left(  w_{1},\ldots,w_{r}\right)  $. By Lemma \ref{lemma1},
$f_{\boldsymbol{j}}\left(  \xi\right)  =\sum_{i=1}^{r}c_{i}\xi_{i}^{\frac
{d}{w_{i}}}+h\left(  \xi\right)  $, therefore
\begin{equation}
\left\vert f\left(  \xi\right)  \right\vert _{p}=\left\vert p^{-ld}%
f_{\boldsymbol{j}}\left(  \eta\right)  +p^{-la}t_{1}\left(  \eta\right)
\right\vert _{p}, \label{rel0}%
\end{equation}
with $t_{1}\left(  \eta\right)  \in\mathbb{Z}_{p}\left[  \eta\right]  $,
$a<d$, since $r<n$. By Lemma \ref{lemma3}, if $l>\frac{M}{d-a}$, then
$\left\vert f\left(  \xi\right)  \right\vert _{p}=p^{ld}\left\vert
f_{\boldsymbol{j}}\left(  \eta\right)  \right\vert _{p}$ and
\[
p^{ld}\inf_{\eta\in U_{n,\boldsymbol{j}}}\left\vert f_{\boldsymbol{j}}\left(
\eta\right)  \right\vert _{p}\leq\left\vert f\left(  \xi\right)  \right\vert
_{p}\leq p^{ld}\sup_{\eta\in U_{n,\boldsymbol{j}}}\left\vert f_{\boldsymbol{j}%
}\left(  \eta\right)  \right\vert _{p}.
\]

Since $||\xi||_{p}\geq p^{M_{0}}$, with $M_{0}\in\mathbb{N}\smallsetminus
\left\{  0\right\}  $ satisfying $M_{0}>\frac{M(w_{1}+\cdots+w_{n})}{(d-a)}$,
implies that $l>\frac{M}{d-a}$, the previous inequality can be re-written as
\begin{equation}
p^{ld}\inf_{\eta\in U_{n,\boldsymbol{j}}}\left\vert f_{\boldsymbol{j}}\left(
\eta\right)  \right\vert _{p}\leq\left\vert f\left(  \xi\right)  \right\vert
_{p}\leq p^{ld}\sup_{\eta\in U_{n,\boldsymbol{j}}}\left\vert f_{\boldsymbol{j}%
}\left(  \eta\right)  \right\vert _{p},\text{ for }\left\Vert \xi\right\Vert
_{p}\geq p^{M_{0}}. \label{ineq1}%
\end{equation}

Note that by Lemma \ref{lemma3}, $\inf_{\eta\in U_{n,\boldsymbol{j}}%
}\left\vert f_{\boldsymbol{j}}\left(  \eta\right)  \right\vert _{p}$ and
$\sup_{\eta\in U_{n,\boldsymbol{j}}}\left\vert f_{\boldsymbol{j}}\left(
\eta\right)  \right\vert _{p}$ are positive numbers.

We now take $\xi\in\mathbb{Q}_{p}^{n}(\boldsymbol{j})$ satisfying $||\xi
||_{p}=p^{l_{0}}$ with $1\leq l_{0}<M_{0}$. By (\ref{rel0}) with $l=l_{0}$ one
gets $\left\vert f\left(  \xi\right)  \right\vert _{p}=p^{l_{0}d}\left\vert
g_{\boldsymbol{j}},_{l_{0}}\left(  \eta\right)  \right\vert _{p}$, and by
applying Lemma \ref{lemma3} to $g_{\boldsymbol{j}},_{l_{0}}\left(
\eta\right)  $ and $U_{n,\boldsymbol{j}}$,%
\begin{equation}
C_{1}\left(  l_{0}\right)  p^{l_{0}d}\leq\left\vert f\left(  \xi\right)
\right\vert _{p}\leq C_{2}\left(  l_{0}\right)  p^{l_{0}d}, \label{ineq1A}%
\end{equation}
for some positive constants $C_{1}\left(  l_{0}\right)  $, $C_{2}\left(
l_{0}\right)  $. By combining (\ref{ineq1})-(\ref{ineq1A}), we have%
\begin{equation}
C_{1}p^{ld}\leq\left\vert f\left(  \xi\right)  \right\vert _{p}\leq
C_{2}p^{ld}, \label{ineq2}%
\end{equation}
for some positive constants $C_{1}$, $C_{2}$.

If $r=n$, then $\xi_{i}=p^{-lw_{i}}\eta_{i}$,$\ \eta_{i}\in\mathbb{Z}$ for
$1\leq i\leq n$ and $|f(\xi)|_{p}=p^{ld}|f(\eta)|_{p}$, and thus inequality
(\ref{ineq2}) holds.

On the other hand,
\[
\Xi\left(  \xi\right)  =p^{ld}{\sum\limits_{i=1}^{r}}\left\vert \eta
_{i}\right\vert _{p}^{\frac{d}{w_{i}}}+{\sum\limits_{i=r+1}^{n}}\left\vert
\eta_{i}\right\vert _{p}^{\frac{d}{w_{i}}}\text{, for }1\leq r\leq n\text{,}%
\]
therefore
\begin{equation}
\Xi\left(  \xi\right)  \leq np^{ld}, \label{ximenor}%
\end{equation}
and
\begin{align}
\Xi\left(  \xi\right)   &  \geq p^{ld}\left(  {\sum\limits_{\left\{
i;ord\left(  \eta_{i}\right)  \leq w_{i}-1\right\}  }}\left\vert \eta
_{i}\right\vert _{p}^{\frac{d}{w_{i}}}\right) \label{ximayor}\\
&  \geq p^{ld}\min_{\eta\in U_{n,\boldsymbol{j}}}\left(  \sum_{\{i:ord(\eta
_{i})\leq w_{i}-1\}}p^{-\frac{(w_{i}-1)d}{w_{i}}}\right)  \geq C_{1}%
p^{ld},\nonumber
\end{align}
where $C_{1}$ is a positive constant. The result follows from inequalities
(\ref{ineq2})-(\ref{ximayor1}).
\end{proof}

\begin{proposition}
\label{Prop1parte2} There exist positive constants $A_{0}$, $A_{1}$, such
that
\begin{equation}
A_{0}\Xi\left(  \xi\right)  \leq\left\vert f\left(  \xi\right)  \right\vert
_{p}\leq A_{1}\Xi\left(  \xi\right)  , \text{ for }\left\Vert \xi\right\Vert
_{p}\leq1. \label{basicineq2}%
\end{equation}

\end{proposition}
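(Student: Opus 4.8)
The plan is to mimic the proof of Proposition \ref{Prop1}, but now working in the region $\|\xi\|_p\leq 1$, i.e. $\xi\in\mathbb{Z}_p^n$, which corresponds to the orthant $\boldsymbol{j}=(1,\ldots,1)$. The crucial simplification is that on this orthant the dilation $d(l,\boldsymbol{j})$ scales \emph{every} coordinate, so the full quasihomogeneity (Q1) of $f$ applies and the scaling becomes exact, with no lower-order remainder to control (unlike in Proposition \ref{Prop1}, where the mixed orthants forced the splitting $f=f_{\boldsymbol{j}}+t$).

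First I would discard the point $\xi=0$, where both sides of (\ref{basicineq2}) vanish, and use Lemma \ref{lemma2parte2}(1) to write any $\xi\in\mathbb{Z}_p^n\setminus\{0\}$ as $\xi=d(l,\boldsymbol{j})\eta$ with $l\in\mathbb{N}$ and $\eta\in V_{n,\boldsymbol{j}}$, so that $\xi_i=p^{lw_i}\eta_i$. Applying (Q1) with $\lambda=p^l$ gives $f(\xi)=p^{ld}f(\eta)$, hence $|f(\xi)|_p=p^{-ld}|f(\eta)|_p$. The same substitution yields $\Xi(\xi)=\sum_{i=1}^n|p^{lw_i}\eta_i|_p^{d/w_i}=p^{-ld}\Xi(\eta)$. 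Thus both quantities factor through $p^{-ld}$ times their values on the fixed set $V_{n,\boldsymbol{j}}$.

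Next I would estimate both quantities on the compact fundamental domain $V_{n,\boldsymbol{j}}$. Since every $\eta\in V_{n,\boldsymbol{j}}$ has some coordinate with $ord(\eta_i)\leq w_i-1<\infty$, we have $0\notin V_{n,\boldsymbol{j}}$; together with the compactness from Lemma \ref{lemma2parte2}(2) and condition (Q2), Lemma \ref{lemma3}(1) gives a positive lower bound $\inf_{\eta\in V_{n,\boldsymbol{j}}}|f(\eta)|_p=:c_1>0$, while $f\in\mathbb{Z}_p[\xi_1,\ldots,\xi_n]$ forces $|f(\eta)|_p\leq 1$ on $\mathbb{Z}_p^n$, giving the upper bound $c_2:=1$. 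For $\Xi$, the estimate $|\eta_i|_p\leq 1$ yields $\Xi(\eta)\leq n$, while the index $i_0$ with $|\eta_{i_0}|_p\geq p^{-(w_{i_0}-1)}$ gives $\Xi(\eta)\geq\min_i p^{-(w_i-1)d/w_i}=:c_3>0$ (here $d/w_i\in\mathbb{N}\setminus\{0\}$ by Lemma \ref{lemma1}).

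Finally I would combine these with the scaling identities. From $c_1 p^{-ld}\leq|f(\xi)|_p\leq c_2 p^{-ld}$ and $c_3 p^{-ld}\leq\Xi(\xi)\leq n p^{-ld}$, eliminating the common factor $p^{-ld}$ yields $(c_1/n)\,\Xi(\xi)\leq|f(\xi)|_p\leq(c_2/c_3)\,\Xi(\xi)$, so one may take $A_0=c_1/n$ and $A_1=c_2/c_3$. I do not anticipate a genuine obstacle; the only point requiring care is verifying that $0\notin V_{n,\boldsymbol{j}}$, which is exactly what makes Lemma \ref{lemma3} applicable and guarantees $c_1>0$. This is the feature that distinguishes the bounded region $\|\xi\|_p\leq 1$ from the unbounded one, and it is precisely why the argument here is cleaner than the proof of Proposition \ref{Prop1}.
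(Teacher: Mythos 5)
Your proposal is correct and follows essentially the same route as the paper: decompose $\mathbb{Z}_p^n\setminus\{0\}$ via Lemma \ref{lemma2parte2}, use exact quasihomogeneity to pull out $p^{-ld}$ from both $|f(\xi)|_p$ and $\Xi(\xi)$, and bound both on the compact set $V_{n,\boldsymbol{j}}$ (your explicit constants $c_2=1$ and $c_3=\min_i p^{-(w_i-1)d/w_i}$ replace the paper's $\sup$ and $\inf$ over $V_{n,\boldsymbol{j}}$, which is an inessential difference).
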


\begin{proof}
Set $\boldsymbol{j}=(1,1,\dots,1)$. Since $\mathbb{Z}_{p}^{n}=\sqcup
_{l=0}^{\infty}d(l,\boldsymbol{j})V_{n,\boldsymbol{j}}$, it is sufficient to
show inequality (\ref{basicineq2}) for $\xi\in d(l,\boldsymbol{j}%
)V_{n,\boldsymbol{j}}$ for $l\in\mathbb{N}$.

Set $\xi_{i}=p^{lw_{i}}\eta_{i}$ for $i=1,\dots,n$, with $\eta_{i}\in
V_{n,\boldsymbol{j}}$, since $f$ is quasielliptic
\[
f(\xi)=f(p^{lw_{1}}\eta_{1},\dots,p^{lw_{n}}\eta_{n})=p^{ld}f(\eta),
\]
and $|f(\xi)|_{p}=p^{-ld}|f(\eta)|_{p}$. By using the fact that
$V_{n,\boldsymbol{j}}$ is compact and $0\notin V_{n,\boldsymbol{j}}$ we have
\[
p^{-ld}\inf_{\eta\in V_{n,\boldsymbol{j}}}|f(\eta)|_{p}\leq|f(\xi)|_{p}\leq
p^{-ld}\sup_{\eta\in V_{n,\boldsymbol{j}}}|f(\eta)|_{p},
\]
with $\inf_{\eta\in V_{n,\boldsymbol{j}}}|f(\eta)|_{p}>0$.

On the other hand,
\begin{equation}
\Xi(\xi)=p^{-ld}\sum_{i=1}^{n}|\eta_{i}|_{p}^{\frac{d}{w_{i}}}\geq
p^{-ld}C_{2}, \label{ximayor1}%
\end{equation}
where $C_{2}:=\inf_{\eta\in V_{n,\boldsymbol{j}}}\left(  \sum_{i=1}^{n}%
|\eta_{i}|_{p}^{\frac{d}{w_{i}}}\right)  >0$. Since $\eta\in\mathbb{Z}_{p}%
^{n}$, we have
\begin{equation}
\Xi(\xi)\leq np^{-ld}. \label{ximenor1}%
\end{equation}
Then
\[
\frac{\Xi(\xi)}{n}\inf_{\eta\in V_{n,\boldsymbol{j}}}|f(\eta)|_{p}\leq
|f(\xi)|_{p}\leq\frac{\Xi(\xi)}{C_{2}}\sup_{\eta\in V_{n,\boldsymbol{j}}%
}|f(\eta)|_{p}.
\]

\end{proof}

As a consequence of Propositions \ref{Prop1}-\ref{Prop1parte2}, we obtain the
following result:

\begin{theorem}
\label{Theo1} Let $f(\xi)$ be a quasielliptic polynomial, then there exist
positive constants $A_{0},A_{1}$ such that
\begin{equation}
A_{0}\Xi(\xi)\leq|f(\xi)|_{p}\leq A_{1}\Xi(\xi),\text{\ for any }\xi
\in\mathbb{Q}_{p}^{n}. \label{inequalitytheorem}%
\end{equation}

\end{theorem}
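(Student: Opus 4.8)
The plan is to patch together the two preceding propositions, exploiting the discreteness of the $p$-adic value group. The key observation is that $\left\Vert \xi\right\Vert _{p}=\max_{i}\left\vert \xi_{i}\right\vert _{p}$ takes values only in $\left\{  0\right\}  \cup\left\{  p^{m};m\in\mathbb{Z}\right\}  $, so there is no $\xi\in\mathbb{Q}_{p}^{n}$ with $1<\left\Vert \xi\right\Vert _{p}<p$. Consequently
\[
\mathbb{Q}_{p}^{n}=\left\{  \xi;\left\Vert \xi\right\Vert _{p}\leq1\right\}
\cup\left\{  \xi;\left\Vert \xi\right\Vert _{p}\geq p\right\}  ,
\]
and these two regions are exactly the domains on which Propositions \ref{Prop1parte2} and \ref{Prop1} supply the two-sided bound. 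Thus no analytic work remains: the content of the theorem is merely that the hypotheses of the two propositions jointly exhaust $\mathbb{Q}_{p}^{n}$.

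Concretely, I would first invoke Proposition \ref{Prop1parte2} to obtain constants $A_{0}^{\prime},A_{1}^{\prime}>0$ with $A_{0}^{\prime}\Xi(\xi)\leq\left\vert f(\xi)\right\vert _{p}\leq A_{1}^{\prime}\Xi(\xi)$ whenever $\left\Vert \xi\right\Vert _{p}\leq1$, and then Proposition \ref{Prop1} to obtain $A_{0}^{\prime\prime},A_{1}^{\prime\prime}>0$ validating the same inequality whenever $\left\Vert \xi\right\Vert _{p}\geq p$. Setting $A_{0}:=\min\left\{  A_{0}^{\prime},A_{0}^{\prime\prime}\right\}  $ and $A_{1}:=\max\left\{  A_{1}^{\prime},A_{1}^{\prime\prime}\right\}  $ then yields a single pair of constants for which (\ref{inequalitytheorem}) holds uniformly on the union of the two regions, i.e.\ on all of $\mathbb{Q}_{p}^{n}$.

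The only point deserving separate comment is $\xi=0$, which lies in the first region. There $\Xi(0)=0$ and, since $f$ carries no constant term by Lemma \ref{lemma1} (equivalently, by condition (Q2)), $\left\vert f(0)\right\vert _{p}=0$, so (\ref{inequalitytheorem}) degenerates to $0\leq0\leq0$ and holds trivially. I do not expect a genuine obstacle in this argument; all the estimation has already been absorbed into Propositions \ref{Prop1} and \ref{Prop1parte2}, and the sole ``idea'' is the elementary remark that the gap in the value group between $1$ and $p$ leaves nothing uncovered once those two ranges are treated.
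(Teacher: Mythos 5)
Your proposal is correct and is precisely the argument the paper intends: Theorem \ref{Theo1} is stated there as an immediate consequence of Propositions \ref{Prop1} and \ref{Prop1parte2}, with no further proof given. Your explicit remarks on the discreteness of the value group (no $\xi$ with $1<\|\xi\|_{p}<p$), the degenerate case $\xi=0$, and the choice $A_{0}=\min$, $A_{1}=\max$ simply spell out the details the paper leaves implicit.
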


\section{\label{Sect2}Semi-quasielliptic polynomials}

Let $\left\langle \cdot,\cdot\right\rangle $ denote the usual inner product of
$\mathbb{R}^{n}$.

\begin{definition}
\label{def_sqelliptic}A polynomial of the form
\begin{equation}
F\left(  \xi,x\right)  :=f\left(  \xi\right)  +{\sum\limits_{\left\langle
\boldsymbol{k},\boldsymbol{w}\right\rangle \leq d-1}}c_{\boldsymbol{k}}\left(
x\right)  \xi^{\boldsymbol{k}}\in\mathbb{Q}_{p}\left[  \xi_{1},\ldots,\xi
_{n}\right]  , \label{QS_symbol}%
\end{equation}
where $f(\xi)$ is a quasielliptic polynomial of degree $d$ with respect to
$\boldsymbol{w}$, and each $c_{\boldsymbol{k}}\left(  x\right)  :$
$\mathbb{Q}_{p}^{n}$ $\rightarrow$ $\mathbb{Q}_{p}$ satisfies
$||c_{\boldsymbol{k}}(x)||_{L^{\infty}}:=\sup_{x\in\mathbb{Q}_{p}^{n}%
}\left\vert c_{\boldsymbol{k}}\left(  x\right)  \right\vert _{p}<\infty$, is a
called a semi-quasielliptic polynomial with variable coefficients, or simply a
semi-quasielliptic polynomial.
\end{definition}

\begin{theorem}
\label{Teo3} Let $F(\xi,x)$ be a semi-quasielliptic polynomial, then there
exist positive constants, $A_{0}$, $A_{1}$, $M_{0}$ ($M_{0}\in\mathbb{N)}$,
which do not depend on $x$, such that
\begin{equation}
A_{0}\Xi(\xi)\leq|F\left(  \xi,x\right)  |_{p}\leq A_{1}\Xi(\xi),\ \text{ for
}||\xi||_{p}\geq p^{M_{0}}. \label{QS_inequality}%
\end{equation}

\end{theorem}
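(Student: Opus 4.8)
The plan is to reduce the statement to Theorem~\ref{Theo1} by showing that, once $\|\xi\|_{p}$ is large enough, the lower-order perturbation
\[
P(\xi,x):={\sum\limits_{\langle\boldsymbol{k},\boldsymbol{w}\rangle\leq d-1}}c_{\boldsymbol{k}}(x)\,\xi^{\boldsymbol{k}}
\]
is \emph{strictly} dominated in $p$-adic absolute value by $f(\xi)$. Since $\left\vert\cdot\right\vert_{p}$ is non-archimedean, this forces $|F(\xi,x)|_{p}=|f(\xi)|_{p}$ on the relevant region, and then the two-sided bound $A_{0}\Xi(\xi)\leq|f(\xi)|_{p}\leq A_{1}\Xi(\xi)$ of Theorem~\ref{Theo1} transfers verbatim to $F$, with the same constants $A_{0},A_{1}$ and a threshold $M_{0}$ still to be determined.

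First I would reuse the scaling from the proof of Proposition~\ref{Prop1}. On a fixed orthant $\mathbb{Q}_{p}^{n}(\boldsymbol{j})$ with $\boldsymbol{j}\neq(1,\dots,1)$, after renaming variables I write $\xi=d(-l,\boldsymbol{j})\eta$ with $\eta\in U_{n,\boldsymbol{j}}\subset\mathbb{Z}_{p}^{n}$, so $\xi_{i}=p^{-lw_{i}}\eta_{i}$ for $i\leq r$ and $\xi_{i}=\eta_{i}$ for $i>r$. A single monomial then satisfies
\[
|\xi^{\boldsymbol{k}}|_{p}=p^{\,l\sum_{i\leq r}w_{i}k_{i}}{\prod\limits_{i=1}^{n}}|\eta_{i}|_{p}^{k_{i}}\leq p^{\,l\langle\boldsymbol{k},\boldsymbol{w}\rangle}\leq p^{\,l(d-1)},
\]
using $|\eta_{i}|_{p}\leq1$ together with the defining degree condition $\langle\boldsymbol{k},\boldsymbol{w}\rangle\leq d-1$. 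Putting $B:=\max_{\boldsymbol{k}}\|c_{\boldsymbol{k}}\|_{L^{\infty}}<\infty$ (a finite maximum over the finitely many $\boldsymbol{k}$), the ultrametric inequality yields $|P(\xi,x)|_{p}\leq B\,p^{\,l(d-1)}$, a bound independent of $x$; this uniformity is precisely what the $L^{\infty}$ hypothesis on the coefficients provides.

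Next I would invoke the lower estimate already contained in the proof of Proposition~\ref{Prop1}: for $l$ past the threshold appearing there one has $|f(\xi)|_{p}\geq C_{1}p^{ld}$ with $C_{1}>0$, see (\ref{ineq2}). Comparing the two bounds gives $|P(\xi,x)|_{p}\leq Bp^{l(d-1)}<C_{1}p^{ld}\leq|f(\xi)|_{p}$ as soon as $p^{l}>B/C_{1}$, so for all sufficiently large $l$ the perturbation is strictly smaller than $f$, and the ultrametric property gives $|F(\xi,x)|_{p}=|f(\xi)|_{p}$. The remaining orthant $\boldsymbol{j}=(1,\dots,1)$, i.e. $\|\xi\|_{p}\leq1$, never enters since we restrict to $\|\xi\|_{p}\geq p^{M_{0}}$.

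The step I expect to be the main (though essentially bookkeeping) obstacle is converting ``$l$ large'' into a clean, $x$-independent threshold ``$\|\xi\|_{p}\geq p^{M_{0}}$''. On the orthant, the index $i_{0}$ with $ord(\eta_{i_{0}})\leq w_{i_{0}}-1$ gives $\|\xi\|_{p}\geq|\xi_{i_{0}}|_{p}\geq p^{\,l}$, while $\eta\in\mathbb{Z}_{p}^{n}$ gives $\|\xi\|_{p}\leq p^{\,lw_{\max}}$ with $w_{\max}=\max_{i}w_{i}$; hence $\|\xi\|_{p}\geq p^{M_{0}}$ forces $l\geq M_{0}/w_{\max}$. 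Choosing $M_{0}$ so large that $M_{0}/w_{\max}$ exceeds both the threshold of Proposition~\ref{Prop1} and $\log_{p}(B/C_{1})$, and then taking the largest such $M_{0}$ over the finitely many orthants $\boldsymbol{j}\neq(1,\dots,1)$, produces a single $M_{0}\in\mathbb{N}$ depending only on $f$ and on the numbers $\|c_{\boldsymbol{k}}\|_{L^{\infty}}$, for which $|F(\xi,x)|_{p}=|f(\xi)|_{p}$ whenever $\|\xi\|_{p}\geq p^{M_{0}}$. Inequality~(\ref{QS_inequality}) is then immediate from Theorem~\ref{Theo1}.
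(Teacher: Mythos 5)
Your proof is correct and follows essentially the same route as the paper's: the same orthant decomposition and scaling $\xi=d(-l,\boldsymbol{j})\eta$ onto the compact set $U_{n,\boldsymbol{j}}$, the same ultrametric domination of the lower-order terms $c_{\boldsymbol{k}}(x)\xi^{\boldsymbol{k}}$ via $\langle\boldsymbol{k},\boldsymbol{w}\rangle\leq d-1$ and the uniform bound $\Vert c_{\boldsymbol{k}}\Vert_{L^{\infty}}$, and the same conversion of the threshold on $l$ into a threshold $\Vert\xi\Vert_{p}\geq p^{M_{0}}$. The only cosmetic difference is that you compare the perturbation directly against $|f(\xi)|_{p}\geq C_{1}p^{ld}$ from (\ref{ineq2}), whereas the paper folds everything into one application of Lemma \ref{lemma3} to $f_{\boldsymbol{j}}$ on $U_{n,\boldsymbol{j}}$; both give $|F(\xi,x)|_{p}=|f(\xi)|_{p}$ on that region and then conclude by Theorem \ref{Theo1}.
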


\begin{proof}
We use all the notation introduced in the proof of Proposition \ref{Prop1}.
Without loss of generality we assume that $\boldsymbol{j=}\left(
-1,\ldots,\underbrace{-1}_{r-\text{th place}},1,\ldots,1\right)  $, with
$1\leq r\leq n $, and set $\xi\in\mathbb{Q}_{p}^{n}(\boldsymbol{j})$.

If $r<n$, we write $F(\xi,x)=f_{\boldsymbol{j}}\left(  \xi_{1},\ldots,\xi
_{r}\right)  +t\left(  \xi_{1},\ldots,\xi_{n}\right)  +\sum_{\boldsymbol{k}%
}c_{\boldsymbol{k}}(x)\xi^{\boldsymbol{k}}$, then
\begin{align*}
|F(\xi,x)|_{p}  &  =|p^{-ld}f_{\boldsymbol{j}}(\eta)+p^{-la}t_{1}(\eta
)+\sum_{\boldsymbol{k}}p^{-l\delta_{\boldsymbol{k}}}\eta^{\boldsymbol{k}%
}c_{\boldsymbol{k}}(x)|_{p}\\
&  =p^{ld}|f_{\boldsymbol{j}}(\eta)+p^{l(d-a)}t_{1}(\eta)+\sum_{\boldsymbol{k}%
}p^{l(d-\delta_{\boldsymbol{k}})}\eta^{\boldsymbol{k}}c_{\boldsymbol{k}%
}(x)|_{p}%
\end{align*}
with $a<d$, since $r<n$, and $\delta_{\boldsymbol{k}}=\sum\nolimits_{i=1}%
^{r}k_{i}w_{i}\leq d-1<d$. By Lemma \ref{lemma3}, if $l>\max\left(  \left\{
\frac{M}{d-a}\right\}  \cup\cup_{\boldsymbol{k}}\left\{  \frac{M+\log
_{p}||c(x)||_{L^{\infty}}}{d-\delta_{\boldsymbol{k}}}\right\}  \right)  $,
then $|F(\xi,x)|_{p}=p^{ld}|f_{\boldsymbol{j}}(\eta)|_{p}$ and
\[
p^{ld}\inf_{\eta\in U_{n,\boldsymbol{j}}}\left\vert f_{\boldsymbol{j}}\left(
\eta\right)  \right\vert _{p}\leq\left\vert F\left(  \xi,x\right)  \right\vert
_{p}\leq p^{ld}\sup_{\eta\in U_{n,\boldsymbol{j}}}\left\vert f_{\boldsymbol{j}%
}\left(  \eta\right)  \right\vert _{p}.
\]
By using inequalities (\ref{ximenor1}) and (\ref{ximayor1}), we conclude
\[
\frac{\Xi(\xi)}{n}\inf_{\eta\in U_{n,\boldsymbol{j}}}|f_{\boldsymbol{j}}%
(\eta)|_{p}\leq|F(\xi,x)|_{p}\leq\frac{\Xi(\xi)}{C_{1}}\sup_{\eta\in
U_{n,\boldsymbol{j}}}|f_{\boldsymbol{j}}(\eta)|_{p}.
\]

On the other hand, if $||\xi||_{p}\geq p^{M_{0}}$, and
\[
M_{0}>(w_{1}+\cdots+w_{n})\max\left(  \left\{  \frac{M}{d-a}\right\}  \cup
\cup_{\boldsymbol{k}}\left\{  \frac{M+\log_{p}||c(x)||_{L^{\infty}}}%
{d-\delta_{\boldsymbol{k}}}\right\}  \right)  ,
\]
since $p^{l(w_{1}+\cdots+w_{n})}\geq||\xi||_{p}$, then we have $l>\max\left(
\left\{  \frac{M}{d-a}\right\}  \cup\cup_{\boldsymbol{k}}\left\{  \frac
{M+\log_{p}||c(x)||_{L^{\infty}}}{d-\delta_{\boldsymbol{k}}}\right\}  \right)
$, and
\begin{equation}
\frac{\Xi(\xi)}{n}\inf_{\eta\in U_{n,\boldsymbol{j}}}|f_{\boldsymbol{j}}%
(\eta)|_{p}\leq|F(\xi,x)|_{p}\leq\frac{\Xi(\xi)}{C_{1}}\sup_{\eta\in
U_{n,\boldsymbol{j}}}|f_{\boldsymbol{j}}(\eta)|_{p},\text{ for }||\xi
||_{p}\geq p^{M_{0}}. \label{ineq4}%
\end{equation}

If $r=n$, then
\[
|F(\xi,x)|_{p}=|p^{-ld}f_{\boldsymbol{j}}(\eta)+\sum_{\boldsymbol{k}%
}p^{-l\langle\boldsymbol{k},\boldsymbol{w}\rangle}\eta^{\boldsymbol{k}%
}c_{\boldsymbol{k}}(x)|_{p}=\left\vert p^{-ld}f_{\boldsymbol{j}}%
(\eta)\right\vert _{p},
\]
for $l>\max\left(  \cup_{\boldsymbol{k}}\left\{  \frac{M+\log_{p}%
||c_{\boldsymbol{k}}(x)||_{L^{\infty}}}{d-\langle\boldsymbol{k},\boldsymbol{w}%
\rangle}\right\}  \right)  $. If we choose $||\xi||_{p}\geq p^{M_{0}}$ with
$M_{0}>(w_{1}+\cdots+w_{n})\max\left(  \cup_{\boldsymbol{k}}\left\{
\frac{M+\log_{p}||c_{\boldsymbol{k}}(x)||_{L^{\infty}}}{d-\langle
\boldsymbol{k},\boldsymbol{w}\rangle}\right\}  \right)  $, we have
$l>\max\left(  \cup_{\boldsymbol{k}}\left\{  \frac{M+\log_{p}%
||c_{\boldsymbol{k}}(x)||_{L^{\infty}}}{d-\langle\boldsymbol{k},\boldsymbol{w}%
\rangle}\right\}  \right)  $ and
\begin{equation}
\frac{\Xi(\xi)}{n}\inf_{\eta\in U_{n,\boldsymbol{j}}}|f_{\boldsymbol{j}}%
(\eta)|_{p}\leq|F(\xi,x)|_{p}\leq\frac{\Xi(\xi)}{C_{1}}\sup_{\eta\in
U_{n,\boldsymbol{j}}}|f_{\boldsymbol{j}}(\eta)|_{p},\text{ for }||\xi
||_{p}\geq p^{M_{0}}. \label{ineq5}%
\end{equation}

The result follows from inequalities (\ref{ineq4})-(\ref{ineq5}).
\end{proof}

\section{\label{Sect3}Pseudo-differential operators with quasielliptic
symbols}

For $\alpha\geq0$, we set
\[
(\boldsymbol{f}(D;\alpha)\phi)(x):=\mathcal{F}_{\xi\rightarrow x}^{-1}\left(
|f(\xi)|_{p}^{\alpha}\mathcal{F}_{x\rightarrow\xi}(\phi)\right)  ,
\]
where $\alpha>0$, $\phi$ is a Lizorkin type function, and $f(\xi)$ is a
quasielliptic polynomial of degree $d$ with respect to $\boldsymbol{w}$. We
call an extension of $\boldsymbol{f}(D;\alpha)$ a\textit{\ pseudo-differential
operator with quasielliptic symbol}.

In this section we study equations of type $\boldsymbol{f}(D;\alpha)u=v$,
indeed we determine the functions spaces on which a such equation has a solution.

\subsection{Sobolev-type spaces}

\begin{definition}
\label{def_space}Given $\beta\geq0$ and $\Xi\left(  \xi\right)  $ as before,
we define for $\phi\in S$ the following norm:
\[
||\phi||_{(\beta,\Xi)}^{2}:=||\phi||_{\beta}^{2}=\int_{\mathbb{Q}_{p}^{n}}%
\Xi^{2\beta}(\xi)|\mathcal{F}(\phi)(\xi)|^{2}d^{n}\xi.
\]
Set
\[
{\LARGE \Phi}:=\{\phi\in S;\mathcal{F}(\phi)(0)=0\},
\]
and $\widetilde{H}_{(\beta,\Xi)}(\mathbb{Q}_{p}^{n}):=\widetilde{H}_{\beta}$
as the completion of $({\LARGE \Phi},||\cdot||_{\beta})$.
\end{definition}

\begin{remark}
The space ${\LARGE \Phi}$ is the Lizorkin space of the second kind introduced
in \cite{A-K-S}.
\end{remark}

\begin{lemma}
\label{lemma5} Set $\beta\geq\alpha$. The operator
\[%
\begin{array}
[c]{llll}%
{\boldsymbol{f}}(D;\alpha): & ({\LARGE \Phi},||\cdot||_{\beta}) & \rightarrow
& ({\LARGE \Phi},||\cdot||_{\beta-\alpha})\\
&  &  & \\
& \phi & \mapsto & \boldsymbol{f}(D;\alpha)\phi,
\end{array}
\]
is well-defined and continuous.
\end{lemma}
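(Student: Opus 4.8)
The plan is to verify separately that the operator is \emph{well-defined}, meaning $\boldsymbol{f}(D;\alpha)\phi\in\Phi$ whenever $\phi\in\Phi$, and that it is \emph{continuous} for the indicated norms. Both statements rest on the identity
\[
\mathcal{F}\left(\boldsymbol{f}(D;\alpha)\phi\right)(\xi)=|f(\xi)|_{p}^{\alpha}\,\mathcal{F}(\phi)(\xi),
\]
which is immediate from the definition $\boldsymbol{f}(D;\alpha)\phi=\mathcal{F}^{-1}(|f|_{p}^{\alpha}\mathcal{F}(\phi))$ together with the fact, recalled in the Preliminaries, that $\mathcal{F}$ is an isomorphism of $S$ onto $S$.

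For well-definedness, I would first show that $g(\xi):=|f(\xi)|_{p}^{\alpha}\,\mathcal{F}(\phi)(\xi)$ belongs to $S$. Since $\phi\in\Phi\subset S$, the function $\mathcal{F}(\phi)$ is locally constant with compact support; because $\mathcal{F}(\phi)(0)=0$ and $\mathcal{F}(\phi)$ is locally constant, it vanishes identically on a ball $(p^{N}\mathbb{Z}_{p})^{n}$ about the origin. On the compact set obtained by removing this ball from the support of $\mathcal{F}(\phi)$—a compact set avoiding the origin—condition (Q2) guarantees that $f$ has no zeros, so Lemma \ref{lemma3} shows that $|f(\xi)|_{p}$ is bounded away from $0$ and locally constant there; hence $|f|_{p}^{\alpha}$, and therefore $g$, is locally constant with compact support, i.e.\ $g\in S$. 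Consequently $\boldsymbol{f}(D;\alpha)\phi=\mathcal{F}^{-1}(g)\in S$. Finally, evaluating the displayed identity at $\xi=0$ gives $\mathcal{F}(\boldsymbol{f}(D;\alpha)\phi)(0)=|f(0)|_{p}^{\alpha}\,\mathcal{F}(\phi)(0)=0$, so $\boldsymbol{f}(D;\alpha)\phi\in\Phi$.

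For continuity, I would compute directly, noting that $\beta-\alpha\geq0$ by hypothesis so that $||\cdot||_{\beta-\alpha}$ is defined:
\[
||\boldsymbol{f}(D;\alpha)\phi||_{\beta-\alpha}^{2}=\int_{\mathbb{Q}_{p}^{n}}\Xi^{2(\beta-\alpha)}(\xi)\,|f(\xi)|_{p}^{2\alpha}\,|\mathcal{F}(\phi)(\xi)|^{2}\,d^{n}\xi.
\]
Applying the upper bound $|f(\xi)|_{p}\leq A_{1}\Xi(\xi)$ of Theorem \ref{Theo1} gives $|f(\xi)|_{p}^{2\alpha}\leq A_{1}^{2\alpha}\,\Xi^{2\alpha}(\xi)$, and combining the exponents via $\Xi^{2(\beta-\alpha)}\Xi^{2\alpha}=\Xi^{2\beta}$ yields
\[
||\boldsymbol{f}(D;\alpha)\phi||_{\beta-\alpha}^{2}\leq A_{1}^{2\alpha}\int_{\mathbb{Q}_{p}^{n}}\Xi^{2\beta}(\xi)\,|\mathcal{F}(\phi)(\xi)|^{2}\,d^{n}\xi=A_{1}^{2\alpha}\,||\phi||_{\beta}^{2}.
\]
Hence $||\boldsymbol{f}(D;\alpha)\phi||_{\beta-\alpha}\leq A_{1}^{\alpha}\,||\phi||_{\beta}$, which is continuity with operator norm at most $A_{1}^{\alpha}$.

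The essential input is the global upper estimate of Theorem \ref{Theo1}, which trades the symbol $|f|_{p}^{\alpha}$ for a power of $\Xi$ inside the integral; once this is available the continuity bound is a one-line comparison. I expect the only genuinely delicate point to be the Schwartz--Bruhat verification in the well-definedness step—specifically, confirming via Lemma \ref{lemma3} that $|f|_{p}^{\alpha}$ is locally constant on compact sets away from the origin, and observing that the vanishing of $\mathcal{F}(\phi)$ on a neighborhood of $0$ exactly neutralizes the single zero of $f$ there.
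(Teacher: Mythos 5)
Your proposal is correct and follows essentially the same route as the paper: the continuity estimate is the identical one-line application of the upper bound $|f(\xi)|_{p}\leq A_{1}\Xi(\xi)$ from Theorem \ref{Theo1} (and you even track the constant $A_{1}^{2\alpha}$ more carefully than the paper does). The only difference is that you spell out the well-definedness step via Lemma \ref{lemma3} and the local constancy of $\mathcal{F}(\phi)$ near the origin, which the paper simply asserts; that added detail is sound and harmless.
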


\begin{proof}
Note that $\phi\in{\LARGE \Phi}$ implies that $\boldsymbol{f}(D;\alpha)\phi
\in{\LARGE \Phi}$. The continuity of the operator follows from Theorem
\ref{Theo1}:%
\begin{align*}
||\boldsymbol{f}(D;\alpha)\phi||_{\beta-\alpha}^{2}  &  =\int_{\mathbb{Q}%
_{p}^{n}}\Xi^{2(\beta-\alpha)}(\xi)|f(\xi)|_{p}^{2\alpha}|\mathcal{F}%
(\phi)(\xi)|^{2}d^{n}\xi\\
&  \leq A_{1}\int_{\mathbb{Q}_{p}^{n}}\Xi^{2(\beta-\alpha)}(\xi)\ \Xi
^{2\alpha}(\xi)|\mathcal{F}(\phi)(\xi)|^{2}d^{n}\xi\\
&  =A_{1}\int_{\mathbb{Q}_{p}^{n}}\Xi^{2\beta}(\xi)|\mathcal{F}(\phi
)(\xi)|^{2}d^{n}\xi=A_{1}||\phi||_{\beta}^{2}.
\end{align*}

\end{proof}

By density, we extend $\boldsymbol{f}(D;\alpha)$ to $\overline{({\LARGE \Phi
},||\cdot||_{\beta})}=\widetilde{H}_{\beta}$.

\begin{lemma}
\label{lemma6} Set $\beta\geq\alpha$. The operator
\[%
\begin{array}
[c]{llll}%
\boldsymbol{f}(D;\alpha): & \widetilde{H}_{\beta} & \rightarrow &
\widetilde{H}_{\beta-\alpha}\\
&  &  & \\
& \phi & \mapsto & \boldsymbol{f}(D;\alpha)\phi,
\end{array}
\]
is well-defined and continuous.
\end{lemma}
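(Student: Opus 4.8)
The plan is to invoke the standard extension principle for bounded linear operators between normed spaces. Recall that by Definition \ref{def_space} the space $\widetilde{H}_{\beta}$ is the completion of $(\Phi,\|\cdot\|_{\beta})$ and, since $\beta\geq\alpha$ guarantees $\beta-\alpha\geq 0$, the space $\widetilde{H}_{\beta-\alpha}$ is the completion of $(\Phi,\|\cdot\|_{\beta-\alpha})$; in particular $\Phi$ embeds as a dense subspace in each of these completions. By Lemma \ref{lemma5} the operator $\boldsymbol{f}(D;\alpha)$ maps $(\Phi,\|\cdot\|_{\beta})$ into $(\Phi,\|\cdot\|_{\beta-\alpha})$ and, reading off the computation there (which rests on Theorem \ref{Theo1}), satisfies
\[
\|\boldsymbol{f}(D;\alpha)\phi\|_{\beta-\alpha}\leq \sqrt{A_{1}}\,\|\phi\|_{\beta}\quad\text{for all }\phi\in\Phi .
\]
Thus it is a bounded linear operator defined on a dense subspace, and the whole task is to extend it to the completion.

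First I would fix an arbitrary $u\in\widetilde{H}_{\beta}$ and, using density, choose a sequence $(\phi_{m})_{m}$ in $\Phi$ with $\phi_{m}\to u$ in $\|\cdot\|_{\beta}$; such a sequence is $\|\cdot\|_{\beta}$-Cauchy. Applying the displayed bound to the differences gives
\[
\|\boldsymbol{f}(D;\alpha)\phi_{m}-\boldsymbol{f}(D;\alpha)\phi_{m'}\|_{\beta-\alpha}\leq \sqrt{A_{1}}\,\|\phi_{m}-\phi_{m'}\|_{\beta},
\]
so $(\boldsymbol{f}(D;\alpha)\phi_{m})_{m}$ is $\|\cdot\|_{\beta-\alpha}$-Cauchy and hence converges to a limit in the complete space $\widetilde{H}_{\beta-\alpha}$. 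I would define $\boldsymbol{f}(D;\alpha)u$ to be this limit. Independence of the chosen sequence follows immediately from the same estimate: if $(\psi_{m})$ is another approximating sequence, then $\|\phi_{m}-\psi_{m}\|_{\beta}\to 0$ forces $\|\boldsymbol{f}(D;\alpha)\phi_{m}-\boldsymbol{f}(D;\alpha)\psi_{m}\|_{\beta-\alpha}\to 0$, so the two candidate limits agree.

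It then remains to record the formal properties. Linearity of the extension is inherited from linearity on $\Phi$ by passing to limits. Letting $m\to\infty$ in $\|\boldsymbol{f}(D;\alpha)\phi_{m}\|_{\beta-\alpha}\leq \sqrt{A_{1}}\,\|\phi_{m}\|_{\beta}$ yields $\|\boldsymbol{f}(D;\alpha)u\|_{\beta-\alpha}\leq \sqrt{A_{1}}\,\|u\|_{\beta}$ for every $u\in\widetilde{H}_{\beta}$, which is precisely the asserted continuity; moreover the extension restricts to the original operator on $\Phi$ and is the unique continuous map with this property.

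I do not anticipate any genuine obstacle: the content is entirely the routine verification that a bounded operator on a dense subspace extends uniquely and continuously to the completion, with the boundedness constant $\sqrt{A_{1}}$ already supplied by Lemma \ref{lemma5}. The only point meriting a word of care is that $\|\cdot\|_{\beta}$ and $\|\cdot\|_{\beta-\alpha}$ are honest norms (not merely seminorms) on $\Phi$, so that the completions are well-defined; this holds because $\Xi(\xi)>0$ for $\xi\neq 0$ together with the local constancy of $\mathcal{F}(\phi)$ forces $\mathcal{F}(\phi)\equiv 0$, hence $\phi=0$, whenever $\|\phi\|_{\beta}=0$.
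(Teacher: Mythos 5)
Your proposal is correct and is essentially the same argument as the paper's: the authors extend $\boldsymbol{f}(D;\alpha)$ from the dense subspace $\Phi$ by the bound of Lemma \ref{lemma5} (the sentence ``By density, we extend\dots'' preceding the lemma) and then verify in the proof of Lemma \ref{lemma6} that the limit of the image sequence lies in $\widetilde{H}_{\beta-\alpha}$, which is exactly your Cauchy-sequence extension argument spelled out in more detail.
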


\begin{proof}
By Lemma \ref{lemma5}, it is sufficient to prove that $Im(\boldsymbol{f}%
(D;\alpha))\subseteq\widetilde{H}_{\beta-\alpha}$. Set $\theta\in
Im(\boldsymbol{f}(D;\alpha))$, i.e. $\theta=\boldsymbol{f}(D;\alpha)\phi$ for
some $\phi\in\widetilde{H}_{\beta}$. Then there exists a sequence $\{\phi
_{l}\}\subseteq{\LARGE \Phi}$ such that $\phi_{l}$ $\underrightarrow
{||\cdot||_{\beta}}$ $\phi$. Define $\theta_{l}=\boldsymbol{f}(D;\alpha
)\phi_{l}\in{\LARGE \Phi}$, by the continuity of the operator $\boldsymbol{f}%
(D;\alpha)$, we have $\boldsymbol{f}(D;\alpha)\phi_{l}$ $\underrightarrow
{||\cdot||_{\beta-\alpha}}$ $\boldsymbol{f}(D;\alpha)\phi$, i.e. $\theta_{l}$
$\underrightarrow{||\cdot||_{\beta-\alpha}}\ \theta$, hence $\theta
\in\widetilde{H}_{\beta-\alpha}$.
\end{proof}

\begin{theorem}
\label{surjective} Set $\beta\geq\alpha$. The operator
\[%
\begin{array}
[c]{llll}%
\boldsymbol{f}(D;\alpha): & \widetilde{H}_{\beta} & \rightarrow &
\widetilde{H}_{\beta-\alpha}\\
&  &  & \\
& \phi & \mapsto & \boldsymbol{f}(D;\alpha)\phi,
\end{array}
\]
is a bicontinuous isomorphism of Banach spaces.
\end{theorem}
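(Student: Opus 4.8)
The plan is to reduce everything to the two-sided estimate furnished by Theorem \ref{Theo1} and then to invert the multiplier $|f|_{p}^{\alpha}$ explicitly on the dense subspace $\Phi$. First I would record, for $\phi\in\Phi$, the identity
\begin{equation*}
\|\boldsymbol{f}(D;\alpha)\phi\|_{\beta-\alpha}^{2}=\int_{\mathbb{Q}_{p}^{n}}\Xi^{2(\beta-\alpha)}(\xi)\,|f(\xi)|_{p}^{2\alpha}\,|\mathcal{F}(\phi)(\xi)|^{2}\,d^{n}\xi,
\end{equation*}
and combine it with $A_{0}\Xi(\xi)\le|f(\xi)|_{p}\le A_{1}\Xi(\xi)$ (Theorem \ref{Theo1}), raised to the power $2\alpha$, to obtain
\begin{equation*}
A_{0}^{2\alpha}\,\|\phi\|_{\beta}^{2}\le\|\boldsymbol{f}(D;\alpha)\phi\|_{\beta-\alpha}^{2}\le A_{1}^{2\alpha}\,\|\phi\|_{\beta}^{2}.
\end{equation*}
The right-hand inequality is just the continuity already established in Lemmas \ref{lemma5}--\ref{lemma6}; the left-hand inequality shows that $\boldsymbol{f}(D;\alpha)$ is bounded below, hence injective, and yields the a priori bound $\|\phi\|_{\beta}\le A_{0}^{-\alpha}\|\boldsymbol{f}(D;\alpha)\phi\|_{\beta-\alpha}$ needed for the continuity of the inverse.

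It remains to prove surjectivity, and here I would work first at the level of $\Phi$. Given $\psi\in\Phi$, its Fourier transform $\mathcal{F}(\psi)$ is a Schwartz--Bruhat function with $\mathcal{F}(\psi)(0)=0$; since such functions are locally constant, $\mathcal{F}(\psi)$ vanishes on a whole ball around the origin. Because $f$ is quasielliptic, $f(\xi)=0$ only at $\xi=0$, so $|f(\xi)|_{p}$ is locally constant and strictly positive on $\mathbb{Q}_{p}^{n}\smallsetminus\{0\}$; consequently $|f|_{p}^{-\alpha}\mathcal{F}(\psi)$ is again a Schwartz--Bruhat function vanishing at the origin. Hence
\begin{equation*}
\phi:=\mathcal{F}^{-1}\!\left(|f|_{p}^{-\alpha}\mathcal{F}(\psi)\right)\in\Phi,\qquad \boldsymbol{f}(D;\alpha)\phi=\psi,
\end{equation*}
so that $\boldsymbol{f}(D;\alpha)$ maps $(\Phi,\|\cdot\|_{\beta})$ bijectively onto $(\Phi,\|\cdot\|_{\beta-\alpha})$.

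Finally I would pass to the completions. The two-sided estimate makes both $\boldsymbol{f}(D;\alpha)$ and its inverse uniformly continuous on the dense subspaces $\Phi$, so each extends uniquely to a bounded linear map between $\widetilde{H}_{\beta}$ and $\widetilde{H}_{\beta-\alpha}$; by density the extended maps remain mutually inverse, which gives the claimed bicontinuous isomorphism. The main obstacle is precisely the surjectivity step: the naive inverse is multiplication by $|f|_{p}^{-\alpha}$, which is singular at $\xi=0$, and the argument succeeds only because in the $p$-adic setting the Lizorkin condition $\mathcal{F}(\psi)(0)=0$ forces $\mathcal{F}(\psi)$ to vanish on an entire neighborhood of the origin, so the singularity is never encountered. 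Everything else is a routine consequence of Theorem \ref{Theo1} together with completeness.
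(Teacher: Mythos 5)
Your proposal is correct and follows essentially the same route as the paper: both rest on the two-sided bound of Theorem \ref{Theo1} and invert the operator by dividing by $|f(\xi)|_{p}^{\alpha}$ on the Fourier side, which is legitimate precisely because $\mathcal{F}(\psi)$ vanishes on a whole neighborhood of the origin for $\psi\in{\LARGE \Phi}$. The only difference is organizational: you establish exact bijectivity on the dense subspace together with the two-sided norm estimate and then invoke the standard extension of uniformly continuous mutually inverse maps to the completions, whereas the paper runs the same computation on Cauchy sequences directly and derives injectivity by contradiction from the same lower bound.
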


\begin{proof}
After Lemmas \ref{lemma5}-\ref{lemma6}, $\boldsymbol{f}(D;\alpha
):\widetilde{H}_{\beta}\rightarrow\widetilde{H}_{\beta-\alpha}$ is a
well-defined continuous operator. We now prove the surjectivity of
$\boldsymbol{f}(D;\alpha)$. Set $\varphi\in\widetilde{H}_{\beta-\alpha}$, then
there exists a Cauchy sequence $\{\varphi_{l}\}\subseteq{\LARGE \Phi}$
converging to $\varphi$, i.e. $\varphi_{l}$ $\underrightarrow{||\cdot
||_{\beta-\alpha}}$ $\varphi$. For each $\varphi_{l}$ we define $u_{l}$ as
follows:
\[
\mathcal{F}(u_{l})(\xi)=%
\begin{cases}
\frac{\mathcal{F}\left(  \varphi_{l}\right)  (\xi)}{|f(\xi)|_{p}^{\alpha}} &
\xi\neq0\\
0 & \xi=0.
\end{cases}
\]
Then $u_{l}\in{\LARGE \Phi}$ and $\boldsymbol{f}(D;\alpha)u_{l}=\varphi_{l}$.
The sequence $\{u_{l}\}$ is a Cauchy sequence:
\begin{align*}
||u_{l}-u_{m}||_{\beta}^{2}  &  =\int_{\mathbb{Q}_{p}^{n}}\Xi^{2\beta}%
(\xi)|\mathcal{F}\left(  u_{l}\right)  (\xi)-\mathcal{F}\left(  u_{m}\right)
(\xi)|^{2}d^{n}\xi\\
&  =\int_{\mathbb{Q}_{p}^{n}}\Xi^{2\beta}(\xi)\left\vert \frac{\mathcal{F}%
\left(  \varphi_{l}\right)  (\xi)-\mathcal{F}\left(  \varphi_{m}\right)
(\xi)}{|f(\xi)|_{p}^{\alpha}}\right\vert ^{2}d^{n}\xi\\
&  \leq C_{1}\int_{\mathbb{Q}_{p}^{n}}\Xi^{2\beta-2\alpha}(\xi)|\mathcal{F}%
\left(  \varphi_{l}\right)  (\xi)-\mathcal{F}\left(  \varphi_{m}\right)
(\xi)|^{2}d^{n}\xi\\
&  \leq C_{1}||\varphi_{l}-\varphi_{m}||_{\beta-\alpha}^{2},
\end{align*}
cf. Theorem \ref{Theo1}. Therefore $u_{l}$ $\underrightarrow{||\cdot||_{\beta
}\text{\ }}u\in\widetilde{H}_{\beta}$. By the continuity of $\boldsymbol{f}%
(D;\alpha)$, we have $\varphi_{l}=\boldsymbol{f}(D;\alpha)u_{l}$
$\underrightarrow{||\cdot||_{\beta-\alpha}}$ $\boldsymbol{f}(D;\alpha)u$ and
by the uniqueness of the limit in a metric space, we conclude $\varphi
=\boldsymbol{f}(D;\alpha)u$. Furthermore,
\begin{align*}
||\varphi||_{\beta-\alpha}  &  =||\boldsymbol{f}(D;\alpha)u||_{\beta-\alpha
}=\lim_{l\rightarrow\infty}||\boldsymbol{f}(D;\alpha)u_{l}||_{\beta-\alpha}\\
&  \leq A_{1}^{\alpha}||u||_{\beta},
\end{align*}
cf. Theorem \ref{Theo1}, which shows the continuity of $\boldsymbol{f}%
(D;\alpha)^{-1}$.

Finally we show the injectivity of $\boldsymbol{f}(D;\alpha)$. Suppose that
$\boldsymbol{f}(D;\alpha)u=0$ has a solution $u\in\widetilde{H}_{\beta}$,
$u\neq0$. We may assume that $||u||_{\beta}=1$. There exists a sequence
$\left\{  u_{l}\right\}  \subset{\LARGE \Phi}$ such that $u_{l}$
$\underrightarrow{||\cdot||_{\beta}}$ $u$, in addition, we may assume that
$||u_{l}||_{\beta}\geq\frac{1}{2}$. Set $\varphi_{l}:=\boldsymbol{f}%
(D;\alpha)u_{l}$. It follows from the continuity of $\boldsymbol{f}(D;\alpha)$
that $\varphi_{l}$ $\underrightarrow{||\cdot||_{\beta-\alpha}}$ $0$. On the
other hand, by using Theorem \ref{Theo1}, one gets%
\begin{align*}
||\varphi_{l}||_{\beta-\alpha}^{2}  &  =\int_{\mathbb{Q}_{p}^{n}}\Xi^{2\left(
\beta-\alpha\right)  }(\xi)\left\vert f\left(  \xi\right)  \right\vert
_{p}^{2\alpha}|\mathcal{F}\left(  u_{l}\right)  (\xi)|^{2}d^{n}\xi\\
&  \geq A_{0}^{2\alpha}\int_{\mathbb{Q}_{p}^{n}}|\mathcal{F}\left(
u_{l}\right)  (\xi)|^{2}d^{n}\xi=A_{0}^{2\alpha}||u_{l}||_{\beta}^{2}\geq
\frac{1}{4}A_{0}^{2\alpha},
\end{align*}
which contradicts $\varphi_{l}$ $\underrightarrow{||\cdot||_{\beta-\alpha}}$
$0$.
\end{proof}

\subsection{\label{invariant}Invariant spaces under the action of
$\boldsymbol{f}(D;\alpha)$}

Consider on $\cap_{\beta\in\mathbb{N}}\widetilde{H}_{\beta}$ the family of
seminorms $\{||\cdot||_{\beta};\ \beta\in\mathbb{N}\}$, then $\left(
\cap_{\beta\in\mathbb{N}}\widetilde{H}_{\beta},||\cdot||_{\beta};\ \beta
\in\mathbb{N}\right)  $ becomes a locally convex space, which is metrizable.
Indeed,
\begin{equation}
\rho(x,y)=\max_{\beta}\left\{  c_{\beta}\frac{||x-y||_{\beta}}%
{1+||x-y||_{\beta}}\right\}  , \label{metric}%
\end{equation}
where $\{c_{\beta}\}_{\beta\in\mathbb{N}}$ is a null-sequence of positive
numbers, is a metric for the topology of $X$, see e.g. \cite{Ru}.

Since $\{c_{\beta}\}_{\beta\in\mathbb{N}}$ is a null-sequence, there exists
$c>0$ such that $c_{\beta}\leq c$ for all $\beta\in\mathbb{N}$, therefore
\begin{equation}
\rho(x,y)=\max_{\beta}\left\{  c_{\beta}\frac{||x-y||_{\beta}}%
{1+||x-y||_{\beta}}\right\}  \leq c\max_{\beta}\{||x-y||_{\beta}\}.
\label{distancia}%
\end{equation}

\begin{lemma}
\label{lemma7} The sequence $\{x_{n}\}\subseteq\cap_{\beta\in\mathbb{N}%
}\widetilde{H}_{\beta}$ converges to $y\in\cap_{\beta\in\mathbb{N}}%
\widetilde{H}_{\beta}$ in the metric $\rho$, if and only if $\{x_{n}\}$
converges to $y$ in the norm $||\cdot||_{\beta}$ for all $\beta\in\mathbb{N}$.
\end{lemma}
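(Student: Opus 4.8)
The plan is to reduce everything to two elementary facts about the metric $\rho$: that the function $\phi(t) := t/(1+t)$ is a homeomorphism of $[0,\infty)$ onto $[0,1)$ (with inverse $s \mapsto s/(1-s)$, continuous at $0$), and that $\{c_\beta\}_{\beta \in \mathbb{N}}$ is a null-sequence of positive numbers. Throughout I will abbreviate $d_\beta(n) := ||x_n - y||_\beta$, so that $\rho(x_n, y) = \max_\beta \{ c_\beta \, \phi(d_\beta(n)) \}$.

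For the forward implication, assume $\rho(x_n, y) \to 0$ and fix an arbitrary index $\beta_0 \in \mathbb{N}$. Since the maximum defining $\rho$ dominates its single $\beta_0$-term, we have $c_{\beta_0}\, \phi(d_{\beta_0}(n)) \leq \rho(x_n, y) \to 0$, and because $c_{\beta_0} > 0$ this forces $\phi(d_{\beta_0}(n)) \to 0$. Applying the continuous inverse $s \mapsto s/(1-s)$ of $\phi$ at $0$ yields $d_{\beta_0}(n) \to 0$. As $\beta_0$ was arbitrary, $\{x_n\}$ converges to $y$ in every norm $||\cdot||_\beta$.

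For the converse, assume $d_\beta(n) \to 0$ for all $\beta$ and fix $\varepsilon > 0$. Here lies the only real point of care, since the maximum runs over infinitely many indices and so cannot be controlled termwise all at once; the remedy is to split the index set using the null-sequence hypothesis. First choose $N$ with $c_\beta < \varepsilon$ for all $\beta > N$; for these indices the bound $\phi(t) < 1$ gives $c_\beta\, \phi(d_\beta(n)) < \varepsilon$ uniformly in $n$. For the finitely many remaining indices $\beta \leq N$, each term $c_\beta\, \phi(d_\beta(n))$ tends to $0$ by the hypothesis together with the continuity of $\phi$ at $0$, so there is an $n_0$ such that all of them are $< \varepsilon$ once $n \geq n_0$. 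Combining the two regimes, $\rho(x_n, y) < \varepsilon$ for $n \geq n_0$, which is the desired convergence.

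I expect the main obstacle to be precisely this tail control in the backward direction: the fact that $\{c_\beta\}$ is a null-sequence is exactly what renders the supremum over the infinitely many large indices harmless, reducing the convergence of $\rho(x_n, y)$ to a finite maximum that ordinary norm convergence handles. The forward direction, by contrast, is immediate once one notes that $\rho$ dominates each individual term.
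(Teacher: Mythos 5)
Your proof is correct and complete; the paper itself states Lemma \ref{lemma7} without proof, treating it as a standard consequence of the construction of the metric $\rho$ from a countable family of seminorms weighted by a null-sequence (cf.\ the reference to Rudin), and your argument is precisely that standard one. Both directions are handled properly: the forward implication via termwise domination and the continuity of $s\mapsto s/(1-s)$ at $0$, and the converse via the essential splitting of the index set into a finite head controlled by norm convergence and an infinite tail controlled uniformly by the null-sequence hypothesis, which is indeed the only delicate point.
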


\begin{proposition}
\label{espacio} (1) Set $\overline{({\LARGE \Phi},\rho)}$ for the completion
of the metric space $({\LARGE \Phi},\rho)$, and $\widetilde{H}_{\infty
}:=\overline{(\cap_{\beta\in\mathbb{N}}\widetilde{H}_{\beta},\rho)}$ for the
completion of the metric space $(\cap_{\beta\in\mathbb{N}}\widetilde{H}%
_{\beta},\rho)$. Then
\[
\overline{({\LARGE \Phi},\rho)}=\widetilde{H}_{\infty},
\]
as complete metric spaces.

(2) $\cap_{\beta\in\mathbb{N}}\widetilde{H}_{\beta}\neq\emptyset$ and
$\widetilde{H}_{\infty}=(\cap_{\beta\in\mathbb{N}}\widetilde{H}_{\beta},\rho)$.
\end{proposition}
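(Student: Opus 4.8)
The plan is to first equip each abstract completion $\widetilde{H}_\beta$ with a concrete functional-analytic model, so that the intersection $\cap_{\beta\in\mathbb{N}}\widetilde{H}_\beta$ acquires a meaning inside a single ambient space, and then to deduce both assertions from Lemma \ref{lemma7}. Since $\mathcal{F}$ is an $L^{2}$-isometry and $\mathcal{F}({\LARGE \Phi})=\{G\in S:G(0)=0\}$, the map $\phi\mapsto\mathcal{F}(\phi)$ is an isometry from $({\LARGE \Phi},||\cdot||_\beta)$ onto $\{G\in S:G(0)=0\}$ endowed with the weighted $L^{2}$-norm $G\mapsto(\int_{\mathbb{Q}_p^n}\Xi^{2\beta}(\xi)|G(\xi)|^{2}d^{n}\xi)^{1/2}$. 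The first thing I would prove is a density lemma: $\{G\in S:G(0)=0\}$ is dense in $L^{2}(\mathbb{Q}_p^n,\Xi^{2\beta}d^{n}\xi)$. This follows by (i) truncating at infinity, using that $\int_{||\xi||_p>p^{R}}\Xi^{2\beta}|G|^{2}\to0$; (ii) approximating a compactly supported $L^{2}(\Xi^{2\beta})$-function by locally constant ones, legitimate because $\Xi^{2\beta}$ is bounded on compact sets; and (iii) forcing the value at $0$ to vanish at the cost of $|G(0)|^{2}\int_{B}\Xi^{2\beta}$ over a small ball $B\ni0$, which tends to $0$ since $\Xi^{2\beta}$ is integrable near the origin. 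Consequently $\widetilde{H}_\beta\cong L^{2}(\mathbb{Q}_p^n,\Xi^{2\beta}d^{n}\xi)$, and embedding these Banach spaces into the measurable functions modulo null sets via $\mathcal{F}$ yields $\cap_{\beta\in\mathbb{N}}\widetilde{H}_\beta\cong\{G:\int_{\mathbb{Q}_p^n}\Xi^{2\beta}|G|^{2}d^{n}\xi<\infty\text{ for all }\beta\in\mathbb{N}\}$, with ${\LARGE \Phi}$ sitting inside. This already gives ${\LARGE \Phi}\subseteq\cap_\beta\widetilde{H}_\beta$, hence the nonemptiness half of (2).

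For part (1), since ${\LARGE \Phi}\subseteq\cap_{\beta}\widetilde{H}_\beta$ one inclusion of completions is automatic, and for the reverse I would show that ${\LARGE \Phi}$ is $\rho$-dense in $\cap_{\beta}\widetilde{H}_\beta$. Fix $g\in\cap_{\beta}\widetilde{H}_\beta$ with Fourier transform $G$. The crucial elementary estimate is $\Xi^{2\beta}\leq1+\Xi^{2N}$ for all $0\leq\beta\leq N$ (split according to $\Xi\geq1$ or $\Xi<1$), so that $||h||_\beta^{2}\leq||h||_0^{2}+||h||_N^{2}$ for every $h$ and every $\beta\leq N$. By the density lemma applied to the single weight $1+\Xi^{2N}$, I choose $\phi_N\in{\LARGE \Phi}$ with $\int_{\mathbb{Q}_p^n}(1+\Xi^{2N})|G-\mathcal{F}(\phi_N)|^{2}d^{n}\xi<N^{-2}$; then $||g-\phi_N||_\beta<N^{-1}$ for every $\beta\leq N$. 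Hence for each fixed $\beta$, $||g-\phi_N||_\beta\to0$ as $N\to\infty$, and by Lemma \ref{lemma7} we get $\rho(g,\phi_N)\to0$. Thus the two metric spaces have a common completion, and $\overline{({\LARGE \Phi},\rho)}=\overline{(\cap_\beta\widetilde{H}_\beta,\rho)}=\widetilde{H}_\infty$.

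For part (2) it remains to prove that $(\cap_\beta\widetilde{H}_\beta,\rho)$ is already complete, so that it coincides with its completion $\widetilde{H}_\infty$. Let $\{g_m\}$ be a $\rho$-Cauchy sequence. By Lemma \ref{lemma7} it is $||\cdot||_\beta$-Cauchy for every $\beta$, and since each $\widetilde{H}_\beta$ is a Banach space there is a limit $\psi_\beta\in\widetilde{H}_\beta$. In the model of the first step, $\mathcal{F}(g_m)\to\psi_\beta$ in $L^{2}(\Xi^{2\beta}d^{n}\xi)$; extracting from the case $\beta=0$ a subsequence converging a.e. and using that $\Xi>0$ off the null set $\{0\}$, all the $\psi_\beta$ agree a.e. with one measurable function $G$. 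Then $\int\Xi^{2\beta}|G|^{2}<\infty$ for every $\beta$, so the corresponding element $\psi$ lies in $\cap_\beta\widetilde{H}_\beta$ and $g_m\to\psi$ in each $||\cdot||_\beta$; by Lemma \ref{lemma7}, $g_m\to\psi$ in $\rho$. This proves completeness, whence $\widetilde{H}_\infty=(\cap_\beta\widetilde{H}_\beta,\rho)$.

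The hard part will be the first step: giving the intersection a meaning by realizing all the $\widetilde{H}_\beta$ concretely, and proving density of $\{G\in S:G(0)=0\}$ in each weighted $L^{2}$-space. Once this model is in place, the only genuinely delicate point in (1) is that a \emph{single} sequence from ${\LARGE \Phi}$ must approximate $g$ in all norms $||\cdot||_\beta$ simultaneously, which is exactly what the domination $\Xi^{2\beta}\leq1+\Xi^{2N}$ buys; everything else reduces to Lemma \ref{lemma7} and the completeness of the Banach spaces $\widetilde{H}_\beta$.
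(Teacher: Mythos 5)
Your proof is correct, and its skeleton coincides with the paper's: both arguments hinge on Lemma \ref{lemma7} to translate $\rho$-convergence into convergence in every norm $||\cdot||_{\beta}$, establish that ${\LARGE \Phi}$ is $\rho$-dense in $\cap_{\beta}\widetilde{H}_{\beta}$, and read off both assertions from that. The genuine difference is that you supply rigorous justifications for the two points the paper passes over. First, the paper's reverse inclusion rests on the bare assertion that for each $\varphi_{l}\in\cap_{\beta}\widetilde{H}_{\beta}$ there is a \emph{single} element of ${\LARGE \Phi}$ within $\frac{1}{l+1}$ of $\varphi_{l}$ in \emph{every} norm $||\cdot||_{\beta}$ simultaneously (it then applies $\rho(x,y)\leq c\max_{\beta}||x-y||_{\beta}$). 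Since $\Xi^{2\beta}$ is not monotone in $\beta$ pointwise ($\Xi<1$ near the origin), this is not automatic; your domination $\Xi^{2\beta}\leq1+\Xi^{2N}$ for $\beta\leq N$, combined with the density of $\{G\in S:G(0)=0\}$ in the single weighted space $L^{2}((1+\Xi^{2N})d^{n}\xi)$, is exactly the missing argument (and controlling only $\beta\leq N$ suffices, via Lemma \ref{lemma7}, to get $\rho\rightarrow0$). Second, the paper treats $\cap_{\beta}\widetilde{H}_{\beta}$ as a set without saying in what ambient space the abstract completions $\widetilde{H}_{\beta}$ are being intersected, and its part (2) ("follows from the first part") implicitly requires that the $||\cdot||_{\beta}$-limits of a $\rho$-Cauchy sequence agree across different $\beta$; your concrete model $\widetilde{H}_{\beta}\cong L^{2}(\mathbb{Q}_{p}^{n},\Xi^{2\beta}d^{n}\xi)$ via the Fourier transform makes the intersection well-defined and lets you identify the limits almost everywhere (using $\Xi>0$ off $\{0\}$), yielding a direct proof of completeness of $(\cap_{\beta}\widetilde{H}_{\beta},\rho)$ rather than deducing it from the set identity with $\overline{({\LARGE \Phi},\rho)}$. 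The cost of your route is the extra density lemma for weighted $L^{2}$-spaces, but all three of its steps (truncation, local-constant approximation, killing the value at $0$ at a cost of $|G(0)|^{2}\int_{B}\Xi^{2\beta}\rightarrow0$) are sound, so the trade is worthwhile: you get a complete proof where the paper has a sketch.
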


\begin{proof}
(1) Set $\phi\in\overline{({\LARGE \Phi},\rho)}$, then there exists a sequence
$\{\phi_{l}\}\subseteq{\LARGE \Phi}$ such that $\phi_{l}$ $\underrightarrow
{\rho\text{\ }}\phi$. By the previous lemma $\phi_{l}$ $\underrightarrow
{||\cdot||_{\beta}\text{\ }}\phi$ for each $\beta\in\mathbb{N}$, i.e. $\phi
\in\cap_{\beta\in\mathbb{N}}\widetilde{H}_{\beta}$. Therefore, $\overline
{({\LARGE \Phi},\rho)}\subseteq(\cap_{\beta\in\mathbb{N}}\widetilde{H}_{\beta
},\rho)\subseteq\overline{(\cap_{\beta\in\mathbb{N}}\widetilde{H}_{\beta}%
,\rho)}$.

Conversely, set $\varphi\in\widetilde{H}_{\infty}$, then there exists a
sequence $\{\varphi_{l}\}\subseteq\cap_{\beta\in\mathbb{N}}\widetilde
{H}_{\beta}$ sa\-tis\-fying $\varphi_{l}$ $\underrightarrow{\rho\text{\ }%
}\varphi$. Then, for each $\varphi_{l}$ and for each $\beta\in\mathbb{N}$,
there exists an element $\varphi_{\beta,m(l)}\in{\LARGE \Phi}$ such that
$||\varphi_{\beta,m(l)}-\varphi_{l}||_{\beta}<\frac{1}{l+1}$, for all
$\beta\in\mathbb{N}$. Then
\begin{align*}
\rho(\varphi_{\beta,m(l)},\varphi)  &  \leq\rho(\varphi_{\beta,m(l)}%
,\varphi_{l})+\rho(\varphi_{l},\varphi)\\
&  \leq c\max_{\beta}||\varphi_{\beta,m(l)}-\varphi_{l}||_{\beta}+\rho
(\varphi_{l},\varphi)\\
&  \leq c\frac{1}{l+1}+\rho(\varphi_{l},\varphi)\rightarrow0.
\end{align*}
where we have used inequality (\ref{distancia}). This shows that the sequence
$\{\varphi_{\beta,m(l)}\}\subseteq{\LARGE \Phi}$ satisfies $\varphi
_{\beta,m(l)}$ $\underrightarrow{\rho\text{\ }}\varphi$, and thus $\varphi
\in\overline{({\LARGE \Phi},\rho)}$.

(2) It follows from the first part.
\end{proof}

\begin{proposition}
\label{Prop3} The operator
\[%
\begin{array}
[c]{llll}%
\boldsymbol{f}(D;\alpha): & \widetilde{H}_{\infty} & \rightarrow &
\widetilde{H}_{\infty}\\
&  &  & \\
& \phi & \mapsto & \boldsymbol{f}(D;\alpha)\phi,
\end{array}
\]
is well-defined and continuous.
\end{proposition}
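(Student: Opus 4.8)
The plan is to reduce the whole statement to a single pointwise comparison of the weights $\Xi^{2\gamma}$ and then to combine it with the finite-level continuity already in hand and the metrizability of $\widetilde{H}_{\infty}$. Fix once and for all a natural number $m\geq\alpha$, say $m:=\lceil\alpha\rceil$. The key observation I would isolate first is the elementary pointwise inequality
\[
\Xi^{2(\beta+\alpha)}(\xi)\leq\Xi^{2\beta}(\xi)+\Xi^{2(\beta+m)}(\xi),\qquad\xi\in\mathbb{Q}_p^n,\ \beta\in\mathbb{N},
\]
which holds because $\Xi^{2(\beta+\alpha)}\leq\Xi^{2\beta}$ wherever $\Xi(\xi)\leq1$ and $\Xi^{2(\beta+\alpha)}\leq\Xi^{2(\beta+m)}$ wherever $\Xi(\xi)\geq1$ (using $\alpha\leq m$). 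This device is what lets me control the non-integer order $\beta+\alpha$ by the two integer orders $\beta$ and $\beta+m$; it is needed precisely because $\Xi$ is not bounded below, so the norms $||\cdot||_{\gamma}$ carry no uniform monotonicity in $\gamma$, and this is the one genuinely delicate point of the argument.

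First I would prove the basic seminorm estimate on the dense subspace $\Phi$. For $\phi\in\Phi$ and any $\beta\in\mathbb{N}$, using Theorem \ref{Theo1} (the upper bound $|f(\xi)|_p\leq A_1\Xi(\xi)$) followed by the pointwise inequality above,
\[
||\boldsymbol{f}(D;\alpha)\phi||_{\beta}^{2}=\int_{\mathbb{Q}_p^n}\Xi^{2\beta}(\xi)|f(\xi)|_p^{2\alpha}|\mathcal{F}(\phi)(\xi)|^2\,d^n\xi\leq A_1^{2\alpha}\left(||\phi||_{\beta}^2+||\phi||_{\beta+m}^2\right),
\]
so that $||\boldsymbol{f}(D;\alpha)\phi||_{\beta}\leq A_1^{\alpha}\left(||\phi||_{\beta}+||\phi||_{\beta+m}\right)$ for every $\beta\in\mathbb{N}$. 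This single family of estimates does essentially all the work.

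Second, I would deduce that $\boldsymbol{f}(D;\alpha)$ is well defined from $\widetilde{H}_{\infty}$ into $\widetilde{H}_{\infty}$. Given $\phi\in\widetilde{H}_{\infty}=\overline{(\Phi,\rho)}$ (Proposition \ref{espacio}), I choose $\{\phi_l\}\subseteq\Phi$ with $\phi_l\to\phi$ in $\rho$; by Lemma \ref{lemma7} this means $\phi_l\to\phi$ in $||\cdot||_{\gamma}$ for every $\gamma\in\mathbb{N}$, so in particular $\{\phi_l\}$ is $||\cdot||_{\beta+m}$-Cauchy for each $\beta$. The estimate then forces $\{\boldsymbol{f}(D;\alpha)\phi_l\}$ to be $||\cdot||_{\beta}$-Cauchy for every $\beta\in\mathbb{N}$, hence $\rho$-Cauchy, with a limit lying in $\cap_{\beta}\widetilde{H}_{\beta}=\widetilde{H}_{\infty}$; the standard argument shows this limit is independent of the approximating sequence and agrees with the level-wise extensions furnished by Lemma \ref{lemma6}, so $\boldsymbol{f}(D;\alpha)\phi\in\widetilde{H}_{\infty}$ is well defined.

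Finally, continuity is immediate from the same estimate together with Lemma \ref{lemma7}. Passing to the limit in the bound of the first step, for each $\beta\in\mathbb{N}$ one still has $||\boldsymbol{f}(D;\alpha)\phi||_{\beta}\leq A_1^{\alpha}\left(||\phi||_{\beta}+||\phi||_{\beta+m}\right)$ on $\widetilde{H}_{\infty}$, so every seminorm of $\boldsymbol{f}(D;\alpha)\phi$ is dominated by finitely many seminorms of $\phi$; this is exactly the criterion for continuity of a linear map between these metrizable locally convex spaces. The main obstacle, as noted, is the mismatch between the real order $\alpha$ and the integer lattice of seminorms indexing $\widetilde{H}_{\infty}$, and the pointwise weight inequality resolves it cleanly.
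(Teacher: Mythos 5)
Your proof is correct, and its skeleton is the same as the paper's: reduce to a seminorm estimate of the form ``$\|\boldsymbol{f}(D;\alpha)\phi\|_{\beta}$ is controlled by norms of $\phi$ of order $\beta+\alpha$'' via Theorem \ref{Theo1}, then convert statements about the countable family of seminorms into statements about the metric $\rho$ via Lemma \ref{lemma7} and Proposition \ref{espacio}. The genuine difference is how the order shift by $\alpha$ is handled. The paper simply sets $\gamma=\beta+\alpha$ and invokes Theorem \ref{surjective} at that level; since the seminorms defining $\widetilde{H}_{\infty}$ are indexed by $\beta\in\mathbb{N}$, this tacitly assumes that membership in $\bigcap_{\beta\in\mathbb{N}}\widetilde{H}_{\beta}$ (and $\rho$-convergence) yields control of the non-integer-order norm $\|\cdot\|_{\beta+\alpha}$, which is not automatic because $\Xi$ is not bounded below and the norms are not monotone in the order. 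Your pointwise inequality $\Xi^{2(\beta+\alpha)}\leq\Xi^{2\beta}+\Xi^{2(\beta+m)}$ with $m=\lceil\alpha\rceil$ closes exactly that gap, yielding the explicit bound $\|\boldsymbol{f}(D;\alpha)\phi\|_{\beta}\leq A_{1}^{\alpha}\bigl(\|\phi\|_{\beta}+\|\phi\|_{\beta+m}\bigr)$, which dominates each target seminorm by two of the defining seminorms and gives well-definedness and continuity in one stroke. So your argument is slightly longer but more self-contained and more careful on the one delicate point; the paper's is shorter but leans on Theorem \ref{surjective} at an order that need not belong to the indexing set of the intersection.
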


\begin{proof}
Set $\phi\in\widetilde{H}_{\infty}=(\cap_{\gamma\in\mathbb{N}}\widetilde
{H}_{\gamma},\rho)$. Take $\gamma=\beta+\alpha$, with $\beta\geq0$. By using
Theorem \ref{surjective} we find that $\boldsymbol{f}(D;\alpha)\phi
\in\widetilde{H}_{\beta}$ for all $\beta\geq0$, thus, $\boldsymbol{f}%
(D;\alpha)\phi\in(\cap_{\beta\in\mathbb{N}}\widetilde{H}_{\beta},\rho)$.

In order to prove the continuity of the operator, let $\{\phi_{l}%
\}\subseteq\cap_{\gamma\in\mathbb{N}}\widetilde{H}_{\gamma}$ be a sequence
such that $\rho(\phi_{l},\phi)\rightarrow0$ as $l\rightarrow\infty$, with
$\phi\in\cap_{\gamma\in\mathbb{N}}\widetilde{H}_{\gamma}$. By Lemma
\ref{lemma7} $||\phi_{l}-\phi||_{\gamma}\rightarrow0$ as $l\rightarrow\infty$
for all $\gamma\in\mathbb{N}$. Take $\gamma=\beta+\alpha$, with $\beta\geq0$.
Then $||\boldsymbol{f}(D;\alpha)\phi_{l}-\boldsymbol{f}(D;\alpha)\phi
||_{\beta}\rightarrow0$ (cf. Theorem \ref{surjective}). Therefore,
$\rho(\boldsymbol{f}(D;\alpha)\phi_{l},\boldsymbol{f}(D;\alpha)\phi
)\rightarrow0$ as $l\rightarrow\infty$ (cf. Lemma \ref{lemma7}), and thus the
operator defined over $(\cap_{\beta\in\mathbb{N}}\widetilde{H}_{\beta},\rho)$
is continuous.
\end{proof}

\begin{theorem}
\label{Theo6} The operator
\[%
\begin{array}
[c]{llll}%
\boldsymbol{f}(D;\alpha): & \widetilde{H}_{\infty} & \rightarrow &
\widetilde{H}_{\infty}\\
&  &  & \\
& \phi & \mapsto & \boldsymbol{f}(D;\alpha)\phi,
\end{array}
\]
is a bicontinuous isomorphism of locally convex vector spaces.
\end{theorem}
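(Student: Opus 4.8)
The plan is to promote the level-by-level statement of Theorem \ref{surjective} to the projective-limit space $\widetilde{H}_{\infty}$, using Proposition \ref{Prop3} for the easy direction and Lemma \ref{lemma7} to translate every $\rho$-statement into statements about the individual seminorms $||\cdot||_{\beta}$. Proposition \ref{Prop3} already shows that $\boldsymbol{f}(D;\alpha)\colon\widetilde{H}_{\infty}\rightarrow\widetilde{H}_{\infty}$ is well-defined and continuous, so what remains is to prove that it is a bijection with continuous inverse. I would establish injectivity and surjectivity first, and then treat the continuity of the inverse separately (or, once bijectivity is in hand, simply invoke the open mapping theorem for Fr\'echet spaces, since $\widetilde{H}_{\infty}$ is metrizable and complete by Proposition \ref{espacio}).

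For injectivity, suppose $\boldsymbol{f}(D;\alpha)u=0$ with $u\in\widetilde{H}_{\infty}$. By Proposition \ref{espacio} we have $\widetilde{H}_{\infty}=\cap_{\beta\in\mathbb{N}}\widetilde{H}_{\beta}$, so in particular $u\in\widetilde{H}_{\beta+\alpha}$ for every $\beta$, and the identity $\boldsymbol{f}(D;\alpha)u=0$ holds in each $\widetilde{H}_{\beta}$. The injectivity part of Theorem \ref{surjective}, applied with index $\beta+\alpha$, then forces $u=0$.

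For surjectivity, take $v\in\widetilde{H}_{\infty}$, so that $v\in\widetilde{H}_{\beta}$ for all $\beta\in\mathbb{N}$. For each fixed $\beta$, Theorem \ref{surjective} yields a unique $u^{(\beta)}\in\widetilde{H}_{\beta+\alpha}$ with $\boldsymbol{f}(D;\alpha)u^{(\beta)}=v$, given on the Fourier side by $\mathcal{F}(u^{(\beta)})(\xi)=\mathcal{F}(v)(\xi)\,|f(\xi)|_{p}^{-\alpha}$ for $\xi\neq0$. Since this prescription is independent of $\beta$, all the $u^{(\beta)}$ coincide with a single element $u\in\cap_{\beta\in\mathbb{N}}\widetilde{H}_{\beta+\alpha}$ satisfying $\boldsymbol{f}(D;\alpha)u=v$. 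To conclude $u\in\widetilde{H}_{\infty}$ I must identify this shifted intersection with $\widetilde{H}_{\infty}=\cap_{\beta\in\mathbb{N}}\widetilde{H}_{\beta}$, and here I would use the nesting $\widetilde{H}_{\beta'}\subseteq\widetilde{H}_{\beta}$ for $\beta'\geq\beta$, so that $\{\beta+\alpha\}_{\beta\in\mathbb{N}}$ is a cofinal family of indices defining the same intersection and the same topology.

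Finally, for continuity of the inverse I would appeal to Lemma \ref{lemma7}: it suffices to show that $v_{l}\rightarrow v$ in $\rho$ implies $\boldsymbol{f}(D;\alpha)^{-1}v_{l}\rightarrow\boldsymbol{f}(D;\alpha)^{-1}v$ in $||\cdot||_{\beta}$ for every $\beta\in\mathbb{N}$. Writing $u_{l}-u=\boldsymbol{f}(D;\alpha)^{-1}(v_{l}-v)$ and using the estimate from the proof of Theorem \ref{surjective}, which rests on the lower bound $|f(\xi)|_{p}\geq A_{0}\Xi(\xi)$ of Theorem \ref{Theo1}, one gets $||u_{l}-u||_{\beta}\leq A_{0}^{-\alpha}||v_{l}-v||_{\beta-\alpha}$ for $\beta\geq\alpha$; then $||v_{l}-v||_{\beta-\alpha}\rightarrow0$ follows from convergence in the seminorm family together with the comparison $||\cdot||_{\beta-\alpha}\leq||\cdot||_{\beta}$, and the finitely many indices with $\beta<\alpha$ are absorbed by the same nesting. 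I expect the genuine difficulty to be concentrated in the surjectivity step, precisely in justifying that the family of seminorms is monotone enough that raising every index by $\alpha$ leaves the space $\widetilde{H}_{\infty}$ unchanged; granting that comparison, every other step is a formal consequence of Theorem \ref{surjective} and Lemma \ref{lemma7}.
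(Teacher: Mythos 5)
Your architecture coincides with the paper's: Proposition \ref{Prop3} for the forward map, reduction to a single level $\beta\geq\alpha$ for injectivity, and the Fourier-side inverse $\mathcal{F}(u)=\mathcal{F}(v)/|f|_{p}^{\alpha}$ for surjectivity (your open-mapping shortcut for the inverse is a clean alternative to the paper's explicit estimate, and is legitimate since $\widetilde{H}_{\infty}$ is a complete metrizable locally convex space by Proposition \ref{espacio}). The gap is exactly where you located it, and the repair you propose does not work: the norms of Definition \ref{def_space} are built from the weight $\Xi^{2\beta}(\xi)$, not $\max(1,\Xi(\xi))^{2\beta}$, and $\Xi(\xi)\rightarrow0$ as $\xi\rightarrow0$; hence $\Xi^{2\beta'}\geq\Xi^{2\beta}$ for $\beta'\geq\beta$ only on the set $\{\Xi\geq1\}$, and the inequality reverses near the origin. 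There is therefore no monotone nesting $\widetilde{H}_{\beta'}\subseteq\widetilde{H}_{\beta}$ and no comparison $||\cdot||_{\beta-\alpha}\leq||\cdot||_{\beta}$ (contrast with the spaces $\widetilde{H}_{(\beta,M_{0})}$ of Definition \ref{def_space2}, where the weight $\max(1,\Xi)$ makes exactly this monotonicity true). What does survive is the two-sided bound $\Xi^{2\gamma}\leq\Xi^{2\gamma_{1}}+\Xi^{2\gamma_{2}}$ for $\gamma_{1}\leq\gamma\leq\gamma_{2}$, which yields $\widetilde{H}_{\gamma_{1}}\cap\widetilde{H}_{\gamma_{2}}\subseteq\widetilde{H}_{\gamma}$; this lets you pass from the family $\{\beta+\alpha\}_{\beta\in\mathbb{N}}$ to all integer indices $\gamma\geq\alpha$, and likewise handles the non-integer index $\beta-\alpha$ in your continuity estimate.

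What it cannot give you are the finitely many integer indices $\gamma<\alpha$, in particular $\gamma=0$: dividing by $|f(\xi)|_{p}^{\alpha}$, which by Theorem \ref{Theo1} is comparable to $\Xi(\xi)^{\alpha}$, worsens the integrability of $\mathcal{F}(v)$ near $\xi=0$, and membership of $v$ in every $\widetilde{H}_{\beta}$ imposes no decay of $\mathcal{F}(v)$ at the origin (elements of $\mathcal{F}({\LARGE \Phi})$ vanish near $0$, but only on shrinking neighbourhoods along an approximating sequence). Concretely, for $n=1$, $f(\xi)=\xi$ and $\alpha\geq1/2$, the function with $\mathcal{F}(v)=\mathbf{1}_{\mathbb{Z}_{p}}$ lies in every $\widetilde{H}_{\beta}$, yet $\mathcal{F}(v)/|\xi|_{p}^{\alpha}\notin L^{2}=\widetilde{H}_{0}$, so the candidate preimage falls outside $\widetilde{H}_{\infty}=\cap_{\beta}\widetilde{H}_{\beta}$. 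You should also be aware that the paper's own proof passes over the same point: it asserts that the sequence $u_{l}$ is Cauchy in each norm $||\cdot||_{\beta}$, which the estimate from Theorem \ref{surjective} only justifies for $\beta\geq\alpha$. So you have correctly isolated the one step that is not a formal consequence of Theorem \ref{surjective} and Lemma \ref{lemma7}; but the monotonicity needed to close it is false as stated, and some additional input on low frequencies (or a restriction of the seminorm family to indices $\beta\geq\alpha$) is required.
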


\begin{proof}
After Proposition \ref{Prop3}, the operator $\boldsymbol{f}(D;\alpha
):\widetilde{H}_{\infty}\rightarrow\widetilde{H}_{\infty}$ is well-defined and
continuous. We now prove the surjectivity of $\boldsymbol{f}(D;\alpha)$. Set
$\phi\in\widetilde{H}_{\infty}$, then there exists a Cauchy sequence
$\{\phi_{l}\}\subseteq{\LARGE \Phi}$ satisfying $\phi_{l}$ $\underrightarrow
{\rho\text{\ }}\phi.$ For each $\phi_{l}$ we can define $u_{l}$ as in the
proof of Theorem \ref{surjective}:
\[
\mathcal{F}(u_{l})(\xi)=%
\begin{cases}
\frac{\mathcal{F}(\phi_{l})(\xi)}{|f(\xi)|_{p}^{\alpha}}, & \xi\neq0\\
0 & \xi=0.
\end{cases}
\]
Then $\{u_{l}\}\subseteq{\LARGE \Phi}$ is a Cauchy sequence in each norm
$||\cdot||_{\beta}$ and therefore is a Cauchy sequence in the metric $\rho$.
Therefore there exists $u\in\widetilde{H}_{\infty}$ such that $u_{l}$
$\underrightarrow{\rho\text{\ }}u$, then by using the continuity of the
operator, cf. Proposition \ref{Prop3}, we have $\phi_{l}=\boldsymbol{f}%
(D;\alpha)u_{l}$ $\underrightarrow{\rho\text{\ }}\boldsymbol{f}(D;\alpha)u$,
i.e. $\boldsymbol{f}(D;\alpha)u=\phi$. The continuity of $\boldsymbol{f}%
(D;\alpha)^{-1}$ is established as in the proof of Theorem \ref{surjective}.

Finally, we show the injectivity of $\boldsymbol{f}(D;\alpha)$. Suppose that
$\boldsymbol{f}(D;\alpha)u=0$ has a solution $u\in\widetilde{H}_{\infty}$.
Since $\widetilde{H}_{\infty}=\cap_{\beta\in\mathbb{N}}\widetilde{H}_{\beta}$,
cf. Proposition \ref{espacio}, $\boldsymbol{f}(D;\alpha)u=0$ in all
$\widetilde{H}_{\beta}$, by the proof of Theorem \ref{surjective}, we know
that it is possible only for $u=0$.
\end{proof}

\begin{remark}
\label{Taibleson}The Taibleson operator $D_{T}^{\alpha}$, with $\alpha>0$, is
defined as $D_{T}^{\alpha}\phi\left(  x\right)  =\mathcal{F}_{\xi\rightarrow
x}^{-1}\left(  \left\Vert \xi\right\Vert _{p}^{\alpha}\mathcal{F}%
_{x\rightarrow\xi}\left(  \phi\right)  \right)  $ for $\phi\in S$. The
operator
\[%
\begin{array}
[c]{llll}%
D_{T}^{\alpha}: & \widetilde{H}_{\infty} & \rightarrow & \widetilde{H}%
_{\infty}\\
&  &  & \\
& \phi & \mapsto & D_{T}^{\alpha}\phi,
\end{array}
\]
is a bicontinuous isomorphism of locally convex vector spaces. The proof of
this results is similar to the one given for Theorem \ref{Theo6}.
\end{remark}

\section{Infinitely Pseudo-differentiable Functions}

\subsection{\label{Sect_family}A Family of infinitely pseudo-differentiable
functions}

Set
\[
\Omega_{l}\left(  x\right)  :=\left\{
\begin{array}
[c]{ccc}%
1 & \text{if} & \left\Vert x\right\Vert _{p}\leq p^{l}\\
&  & \\
0 & \text{if} & \left\Vert x\right\Vert _{p}>p^{l},
\end{array}
\right.
\]
for $l\in\mathbb{Z}$, and $\varphi_{r,t}\left(  x\right)  :=\mathcal{F}%
^{-1}\left(  \left(  1-\Omega_{-r}\left(  \xi\right)  \right)  e^{-t|f(\xi
)|_{p}^{\alpha}}\right)  $ for $r\in\mathbb{N\smallsetminus}\left\{
0\right\}  $, $t$, $\alpha>0$, and $\varphi_{l,r,t}\left(  x\right)
:=\mathcal{F}^{-1}\left(  \left(  1-\Omega_{-r}\left(  \xi\right)  \right)
\Omega_{l}\left(  \xi\right)  e^{-t|f(\xi)|_{p}^{\alpha}}\right)  $ for
$r\in\mathbb{N\smallsetminus}\left\{  0\right\}  $, $l\in\mathbb{N}$, with
$l>r$, $\ $and $Z\left(  x,t,\alpha\right)  :=Z\left(  x,t\right)
=\int_{\mathbb{Q}_{p}^{n}}\Psi\left(  x\cdot\xi\right)  e^{-t\left\vert
f\left(  \xi\right)  \right\vert _{p}^{\alpha}}d^{n}\xi$.

\begin{remark}
\label{note3}Let $\left\{  \theta_{l}\right\}  \subset L^{2}$and $\theta\in
L^{2}$. We recall that if
\begin{equation}
\lim_{_{l\rightarrow\infty}}%
{\textstyle\int\limits_{\mathbb{Q}_{p}^{n}}}
|\mathcal{F}(\theta_{l})(\xi)-\mathcal{F}(\theta)(\xi)|^{2}d^{n}\xi
=\lim_{_{l\rightarrow\infty}}%
{\textstyle\int\limits_{\mathbb{Q}_{p}^{n}}}
|\theta_{l}(\xi)-\theta(\xi)|^{2}d^{n}\xi=0, \label{eq8}%
\end{equation}
then by the Chebishev inequality%
\[
\lim_{l\rightarrow\infty}vol\left(  \left\{  \xi\in\mathbb{Q}_{p}^{n}%
;|\theta_{l}(\xi)-\theta(\xi)|\geq\delta\right\}  \right)  =0,
\]
for any $\delta\geq0$ (here $vol(A)$ means the Haar measure of $A$), i.e.
$\theta_{l}$ $\underrightarrow{\text{measure}}$ $\theta$ as $l\rightarrow
\infty$. We also recall that if
\[
\varphi_{l}\text{ }\underrightarrow{\text{measure}}\text{ }\widetilde{\varphi
}_{0}\text{ and }\varphi_{l}\text{ }\underrightarrow{\text{measure}}\text{
}\widetilde{\varphi}_{1},
\]
then $\widetilde{\varphi}_{0}=\widetilde{\varphi}_{1}$ almost everywhere.
\end{remark}

\begin{lemma}
\label{note4}With the above notation, $\varphi_{l,r,t}\left(  x\right)
\in{\LARGE \Phi}$.
\end{lemma}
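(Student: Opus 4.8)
The plan is to reduce the claim to a statement about the Fourier transform of $\varphi_{l,r,t}$ and then verify the two defining conditions of ${\LARGE \Phi}$. Write $g(\xi):=\left(1-\Omega_{-r}(\xi)\right)\Omega_{l}(\xi)\,e^{-t|f(\xi)|_{p}^{\alpha}}$, so that by definition $\varphi_{l,r,t}=\mathcal{F}^{-1}(g)$. Since the Fourier transform is a linear isomorphism of $S$ onto $S$ with inverse $\mathcal{F}^{-1}$, we have $\mathcal{F}(\varphi_{l,r,t})=g$; hence $\varphi_{l,r,t}\in S$ if and only if $g\in S$, and the Lizorkin condition $\mathcal{F}(\varphi_{l,r,t})(0)=0$ is exactly $g(0)=0$. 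Thus it suffices to prove that $g\in S$ and that $g(0)=0$.

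First I would identify the support of $g$. Since $1-\Omega_{-r}(\xi)$ is the characteristic function of $\{\|\xi\|_{p}>p^{-r}\}=\{\|\xi\|_{p}\geq p^{1-r}\}$ and $\Omega_{l}(\xi)$ is the characteristic function of $\{\|\xi\|_{p}\leq p^{l}\}$, the product is the characteristic function of the annulus $A:=\{\xi\in\mathbb{Q}_{p}^{n}; p^{1-r}\leq\|\xi\|_{p}\leq p^{l}\}$. Because the norm $\|\cdot\|_{p}$ takes values in a discrete set, $A$ is a compact clopen subset of $\mathbb{Q}_{p}^{n}$ with $0\notin A$, and its characteristic function is locally constant. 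It remains only to control the factor $e^{-t|f(\xi)|_{p}^{\alpha}}$ on $A$.

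The one genuine point is the local constancy of $e^{-t|f(\xi)|_{p}^{\alpha}}$, and here I would invoke Lemma \ref{lemma3}. By condition (Q2) of Definition \ref{def_qelliptic}, the quasielliptic polynomial $f$ satisfies $f(\xi)=0\Leftrightarrow\xi=0$, so $f$ does not vanish on $A$; applying Lemma \ref{lemma3} to $f$ and the compact set $A$ (with $0\notin A$) shows that $|f(\xi)|_{p}$ is bounded below by some $p^{-M}>0$ and is constant on each ball of a finite covering $\{B_{M+1+R}(\widetilde{\xi}_{i})\}$ of $A$. Hence $|f(\xi)|_{p}$, and therefore $e^{-t|f(\xi)|_{p}^{\alpha}}$, is a locally constant function taking finitely many values on $A$. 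Consequently $g$ is a finite $\mathbb{C}$-linear combination of characteristic functions of balls, i.e. $g\in S$, and so $\varphi_{l,r,t}=\mathcal{F}^{-1}(g)\in S$.

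Finally, evaluating at the origin gives $\|0\|_{p}=0\leq p^{-r}$, so $\Omega_{-r}(0)=1$ and $1-\Omega_{-r}(0)=0$, whence $g(0)=0$; equivalently $\mathcal{F}(\varphi_{l,r,t})(0)=0$. Together with $\varphi_{l,r,t}\in S$ this yields $\varphi_{l,r,t}\in{\LARGE \Phi}$. The only step requiring care is the local constancy of the exponential factor, which is precisely where quasiellipticity (through Lemma \ref{lemma3}) is used; the compact support and the vanishing at $\xi=0$ are immediate from the cutoffs $\Omega_{l}$ and $1-\Omega_{-r}$.
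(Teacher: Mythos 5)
Your proof is correct and follows essentially the same route as the paper: identify the support of $\mathcal{F}(\varphi_{l,r,t})$ as the compact annulus $A=\{p^{1-r}\leq\|\xi\|_{p}\leq p^{l}\}$ avoiding the origin, invoke Lemma \ref{lemma3}(2) to get local constancy of $|f(\xi)|_{p}$ on a finite covering of $A$ by balls, and conclude that the symbol is a finite linear combination of characteristic functions of balls vanishing at $\xi=0$. Your write-up is in fact slightly more explicit than the paper's in spelling out why $g\in S$ and why $g(0)=0$.
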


\begin{proof}
Note that the support of $\mathcal{F}\left(  \varphi_{l,r,t}\right)  \left(
\xi\right)  $\ is $A:=\left\{  \xi\in\mathbb{Q}_{p}^{n};p^{-r+1}\leq\left\Vert
\xi\right\Vert _{p}\leq p^{l}\right\}  $, which is a compact subset. By
applying Lemma \ref{lemma3} (2) to $f$ and $A$, we get a finite number of
points $\widetilde{\xi}_{i}\in A$, such that
\[
\varphi_{l,r,t}\left(  x\right)  =\mathcal{F}_{\xi\rightarrow x}^{-1}\left(
\sum\limits_{i}e^{-t\left\vert f\left(  \widetilde{\xi}_{i}\right)
\right\vert _{p}^{\alpha}}\Omega_{M+R+1}\left(  \xi-\widetilde{\xi}%
_{i}\right)  \right)  ,
\]
which shows that $\varphi_{l,r,t}\left(  x\right)  \in{\LARGE \Phi}$.
\end{proof}

\begin{lemma}
\label{lemma10}With the above notation, the following assertions hold: (1)
$\left\{  \varphi_{l,r,t}\right\}  _{l}$ is Cauchy sequence in $\widetilde
{H}_{\infty}$, (2) $\varphi_{l,r,t}$ $\underrightarrow{\text{measure}}$
$\varphi_{r,t}$ as $l\rightarrow\infty$.
\end{lemma}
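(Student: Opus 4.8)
The plan is to use Lemma~\ref{lemma7} to reduce the $\widetilde{H}_{\infty}$ statement to the individual Banach norms $\|\cdot\|_{\beta}$, and then to estimate every integral that appears by decomposing $\mathbb{Q}_p^n$ into the spheres $\|\xi\|_p=p^{N}$ and summing over $N$. The engine throughout is Theorem~\ref{Theo1}, which lets me trade the symbol $|f(\xi)|_p$ for the explicit function $\Xi(\xi)$.

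For part (1), by Lemma~\ref{lemma7} it suffices to show that $\{\varphi_{l,r,t}\}_l$ is Cauchy in each norm $\|\cdot\|_{\beta}$, $\beta\in\mathbb{N}$. First I would observe that for $l>m>r$ the two Fourier transforms differ only on the annulus $p^{m+1}\leq\|\xi\|_p\leq p^{l}$, where $1-\Omega_{-r}(\xi)=1$, so that
\[
\|\varphi_{l,r,t}-\varphi_{m,r,t}\|_{\beta}^{2}=\int_{p^{m+1}\leq\|\xi\|_p\leq p^{l}}\Xi^{2\beta}(\xi)\,e^{-2t|f(\xi)|_p^{\alpha}}\,d^{n}\xi .
\]
By Theorem~\ref{Theo1} one has $|f(\xi)|_p\geq A_{0}\Xi(\xi)$, so the integrand is bounded by $\Xi^{2\beta}(\xi)\,e^{-2tA_{0}^{\alpha}\Xi^{\alpha}(\xi)}$. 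The next step is to show that the integral of this majorant over $\{\|\xi\|_p\geq 1\}$ is finite. On the sphere $\|\xi\|_p=p^{N}$ (with $N\geq 1$, volume $p^{nN}(1-p^{-n})$) there is an index $i$ with $|\xi_i|_p=p^{N}$, giving $\Xi(\xi)\geq p^{Nd/W}$ where $W:=\max_j w_j$, while $\Xi(\xi)\leq n\,p^{Nd}$ since each $w_j\geq 1$. Hence the $N$-th sphere contributes at most
\[
C\,p^{N(n+2\beta d)}\exp\!\left(-2tA_{0}^{\alpha}\,p^{N\alpha d/W}\right),
\]
and since the doubly exponential decay in $N$ dominates the ordinary exponential growth $p^{N(n+2\beta d)}$, the series over $N$ converges. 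Consequently the tail of this convergent series bounds $\|\varphi_{l,r,t}-\varphi_{m,r,t}\|_{\beta}^{2}$ and tends to $0$ as $m\to\infty$ uniformly in $l$, which is the Cauchy property.

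For part (2), by Remark~\ref{note3} it is enough to establish $L^{2}$-convergence of the Fourier transforms. For $l>r$,
\[
\mathcal{F}(\varphi_{r,t})(\xi)-\mathcal{F}(\varphi_{l,r,t})(\xi)=\bigl(1-\Omega_{-r}(\xi)\bigr)\bigl(1-\Omega_{l}(\xi)\bigr)e^{-t|f(\xi)|_p^{\alpha}}
\]
is supported on $\|\xi\|_p\geq p^{l+1}$, so that
\[
\int_{\mathbb{Q}_p^{n}}\bigl|\mathcal{F}(\varphi_{r,t})-\mathcal{F}(\varphi_{l,r,t})\bigr|^{2}d^{n}\xi=\int_{\|\xi\|_p\geq p^{l+1}}e^{-2t|f(\xi)|_p^{\alpha}}\,d^{n}\xi .
\]
The identical sphere-by-sphere estimate (now with $\beta=0$) shows that the integral over $\{\|\xi\|_p\geq 1\}$ is finite, so this tail tends to $0$ as $l\to\infty$; Remark~\ref{note3} then yields $\varphi_{l,r,t}\xrightarrow{\text{measure}}\varphi_{r,t}$.

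The main obstacle is precisely the convergence of the majorizing integral in part (1): one must verify that the Gaussian-type factor $\exp(-c\,p^{N\gamma})$ with $\gamma=\alpha d/W>0$ decays fast enough to absorb both the spherical volume growth $p^{nN}$ and the polynomial weight $\Xi^{2\beta}$. Theorem~\ref{Theo1} is what makes this transparent, since it converts the implicit bound coming from $|f|_p$ into the explicit power behaviour of $\Xi$ on each sphere; once that reduction is in place the remaining estimates are routine tail bounds for an absolutely convergent series.
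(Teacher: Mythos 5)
Your proof is correct and follows essentially the same route as the paper: reduce to the individual norms $\|\cdot\|_{\beta}$ via Lemma \ref{lemma7}, write the difference as an integral over an annulus, dominate the integrand using Theorem \ref{Theo1}, and let the tail of a convergent integral go to zero (and the same estimate with $\beta=0$ plus Remark \ref{note3} for part (2)). Your explicit sphere-by-sphere summation, with the correct lower bound $\Xi(\xi)\geq\|\xi\|_{p}^{d/\max_{i}w_{i}}$, simply makes precise the integrability of the majorant that the paper asserts (with the slightly imprecise exponent $\|\xi\|_{p}^{\alpha d}$) before invoking dominated convergence.
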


\begin{proof}
(1) Take $m\geq l>r$, then $\left\Vert \varphi_{m,r,t}-\varphi_{l,r,t}%
\right\Vert _{\beta}^{2}$ equals%
\[
\int\limits_{\mathbb{Q}_{p}^{n}}\Xi^{2\beta}(\xi)|\left(  1-\Omega_{-r}\left(
\xi\right)  \right)  \Omega_{m}\left(  \xi\right)  e^{-t\left\vert f\left(
\xi\right)  \right\vert _{p}^{\alpha}}-\left(  1-\Omega_{-r}\left(
\xi\right)  \right)  \Omega_{l}\left(  \xi\right)  e^{-t\left\vert f\left(
\xi\right)  \right\vert _{p}^{\alpha}}|^{2}d^{n}\xi
\]%
\begin{align*}
&  \leq\int\limits_{p^{l+1}\leq\left\Vert \xi\right\Vert _{p}\leq p^{m}}%
\Xi^{2\beta}(\xi)e^{-2t\left\vert f\left(  \xi\right)  \right\vert
_{p}^{\alpha}}d^{n}\xi\\
&  \leq\int\limits_{\left\Vert \xi\right\Vert _{p}\geq p^{l+1}}\Xi^{2\beta
}(\xi)e^{-2A_{0}^{\alpha}t\left\Vert \xi\right\Vert _{p}^{\alpha d}}d^{n}\xi\\
&  \leq n^{2\beta}\int\limits_{\left\Vert \xi\right\Vert _{p}\geq p^{l+1}%
}\left\Vert \xi\right\Vert _{p}^{\frac{2\beta d}{\min_{i}\left\{
w_{i}\right\}  }}e^{-2A_{0}^{\alpha}t\left\Vert \xi\right\Vert _{p}^{\alpha
d}}d^{n}\xi,
\end{align*}
where in the last inequality we use that $\Xi(\xi)\leq n\left\Vert
\xi\right\Vert _{p}^{\frac{d}{\min_{i}\left\{  w_{i}\right\}  }}$ for
$\left\Vert \xi\right\Vert _{p}>1$ and Theorem \ref{Theo1}. Now, since
$\int\nolimits_{\mathbb{Q}_{p}^{n}}\left\Vert \xi\right\Vert _{p}%
^{\frac{2\beta d}{\min_{i}\left\{  w_{i}\right\}  }}e^{-2A_{0}^{\alpha
}t\left\Vert \xi\right\Vert _{p}^{\alpha d}}d^{n}\xi<\infty$, by using the
Lebesgue dominated convergence lemma, one gets
\[
\left\Vert \varphi_{m,r,t}-\varphi_{l,r,t}\right\Vert _{\beta}^{2}%
\rightarrow0\text{ as }m,l\rightarrow\infty.
\]

(2) By Remark (\ref{note3}), it is sufficient to show:%

\begin{align*}
&  \lim_{_{l\rightarrow\infty}}\int\limits_{\mathbb{Q}_{p}^{n}}|\left(
1-\Omega_{-r}\left(  \xi\right)  \right)  \Omega_{l}\left(  \xi\right)
e^{-t\left\vert f\left(  \xi\right)  \right\vert _{p}^{\alpha}}-\left(
1-\Omega_{-r}\left(  \xi\right)  \right)  e^{-t\left\vert f\left(  \xi\right)
\right\vert _{p}^{\alpha}}|^{2}d^{n}\xi\\
&  \leq\lim_{_{l\rightarrow\infty}}\int\limits_{\left\{  \xi;\left\Vert
\xi\right\Vert _{p}\geq p^{l+1}\right\}  }e^{-2A_{0}^{\alpha}t\left\Vert
\xi\right\Vert _{p}^{\alpha d}}d^{n}\xi=0,
\end{align*}
by Lebesgue dominated convergence lemma and $\int_{\mathbb{Q}_{p}^{n}%
}e^{-2A_{0}^{\alpha}t\left\Vert \xi\right\Vert _{p}^{\alpha d}}d^{n}\xi
<\infty$.
\end{proof}

\begin{theorem}
\label{Theo7A}(1) $\varphi_{r,t}\left(  x\right)  \in\widetilde{H}_{\infty}$,
for any $r\in\mathbb{N\smallsetminus}\left\{  0\right\}  $, $t>0$. (2)
$Z\left(  x,t\right)  \in\widetilde{H}_{\infty}$, for any $t>0$.
\end{theorem}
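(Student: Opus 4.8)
The plan is to realize both $\varphi_{r,t}$ and $Z(x,t)$ as limits in $\widetilde{H}_{\infty}$ of sequences already known to lie in $\Phi$, and then to \emph{identify} these abstract limits with the concrete functions by comparing them in the weaker topology of convergence in measure. For part (1), by Lemma \ref{note4} each $\varphi_{l,r,t}$ lies in $\Phi\subseteq\widetilde{H}_{\infty}$, and by Lemma \ref{lemma10}(1) the sequence $\{\varphi_{l,r,t}\}_{l}$ is Cauchy in $\widetilde{H}_{\infty}$. Since $\widetilde{H}_{\infty}$ is a complete metric space, this sequence converges in $\rho$ to some $\psi\in\widetilde{H}_{\infty}$. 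By Lemma \ref{lemma7}, convergence in $\rho$ forces convergence in every norm $||\cdot||_{\beta}$; taking $\beta=0$, so that $||\cdot||_{0}$ is the $L^{2}$-norm by Plancherel, gives $\varphi_{l,r,t}\to\psi$ in $L^{2}$, hence in measure by Chebyshev (Remark \ref{note3}). On the other hand, Lemma \ref{lemma10}(2) asserts $\varphi_{l,r,t}\to\varphi_{r,t}$ in measure. By the uniqueness of the limit in measure (Remark \ref{note3}), $\psi=\varphi_{r,t}$ almost everywhere, so $\varphi_{r,t}\in\widetilde{H}_{\infty}$.

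For part (2), I first show $\{\varphi_{r,t}\}_{r}$ is Cauchy in $\widetilde{H}_{\infty}$. For $r^{\prime}>r$ one has $\mathcal{F}(\varphi_{r^{\prime},t})-\mathcal{F}(\varphi_{r,t})=(\Omega_{-r}-\Omega_{-r^{\prime}})e^{-t|f(\xi)|_{p}^{\alpha}}$, which is supported on the shell $\{p^{-r^{\prime}+1}\leq||\xi||_{p}\leq p^{-r}\}$, whence
\[
||\varphi_{r^{\prime},t}-\varphi_{r,t}||_{\beta}^{2}=\int_{p^{-r^{\prime}+1}\leq||\xi||_{p}\leq p^{-r}}\Xi^{2\beta}(\xi)\,e^{-2t|f(\xi)|_{p}^{\alpha}}\,d^{n}\xi.
\]
On this region $||\xi||_{p}\leq1$, so $\Xi(\xi)\leq n$ and $e^{-2t|f(\xi)|_{p}^{\alpha}}\leq1$; the integrand is bounded by $n^{2\beta}$ while the domain has volume tending to $0$ as $r\to\infty$. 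Hence this norm tends to $0$ for every $\beta$, and by Lemma \ref{lemma7} the sequence is Cauchy in $\rho$. By completeness it converges to some $\psi\in\widetilde{H}_{\infty}$, which indeed lies in $\widetilde{H}_{\infty}$ by part (1).

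It remains to identify $\psi$ with $Z(x,t)=\mathcal{F}^{-1}(e^{-t|f(\xi)|_{p}^{\alpha}})$. By Theorem \ref{Theo1}, $|f(\xi)|_{p}\geq A_{0}\Xi(\xi)$, which (as already used in Lemma \ref{lemma10}) forces $e^{-t|f(\xi)|_{p}^{\alpha}}\in L^{2}$, so $Z(x,t)$ is well-defined. A dominated-convergence argument gives
\[
||\varphi_{r,t}-Z||_{L^{2}}^{2}=\int_{||\xi||_{p}\leq p^{-r}}e^{-2t|f(\xi)|_{p}^{\alpha}}\,d^{n}\xi\longrightarrow0,
\]
so by Plancherel $\varphi_{r,t}\to Z$ in $L^{2}$; comparing with the $L^{2}$-limit $\psi$ of the same sequence yields $\psi=Z$ almost everywhere, hence $Z(x,t)\in\widetilde{H}_{\infty}$.

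The individual estimates are all controlled by Theorem \ref{Theo1} and are routine. The conceptual crux, and where I expect the main care to be needed, is the bridging argument: transferring between convergence in the completion's metric $\rho$ and convergence in measure (via the intermediate $L^{2}=||\cdot||_{0}$ topology) is exactly what lets us replace the abstract limit furnished by completeness with the explicitly given function $\varphi_{r,t}$ or $Z(x,t)$. I expect this identification step, rather than any single inequality, to be the substantive point of the proof.
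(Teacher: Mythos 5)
Your proof is correct and follows essentially the same route as the paper's: use Lemma \ref{lemma10} to get a Cauchy sequence in every $\|\cdot\|_{\beta}$, invoke completeness of $\widetilde{H}_{\infty}$, and identify the abstract limit with the concrete function by passing through the $\beta=0$ (i.e.\ $L^{2}$) norm and uniqueness of limits in measure, first for $\varphi_{r,t}$ and then for $Z(x,t)$ via the tail estimate on $\{\|\xi\|_{p}\leq p^{-r}\}$. The only cosmetic difference is that you compute $\|\varphi_{r',t}-\varphi_{r,t}\|_{\beta}$ directly from the explicit Fourier transforms, where the paper justifies the same formula via a double limit over the approximants $\varphi_{l,r,t}$ and dominated convergence; both are fine given the identification established in part (1).
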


\begin{proof}
(1) By Lemma \ref{lemma10} (1), $\left\{  \varphi_{l,r,t}\right\}  _{l}$ is
Cauchy sequence in $\widetilde{H}_{\infty}$, i.e. $\varphi_{l,r,t}$
$\underrightarrow{||\cdot||_{\beta}\text{\ }}\widetilde{\varphi}$, for some
$\widetilde{\varphi}$ $\in\widetilde{H}_{\infty}$ and any $\beta\in\mathbb{N}%
$. For $\beta=0$, this means $\varphi_{l,r,t}$ $\underrightarrow
{L^{2}\text{\ }}\widetilde{\varphi}\in L^{2}$, and by Remark \ref{note3},
\[
\varphi_{l,r,t}\text{ }\underrightarrow{\text{measure}}\text{ }\widetilde
{\varphi}\text{ as }l\rightarrow\infty.
\]
On the other hand, by Lemma \ref{lemma10} (2),
\[
\varphi_{l,r,t}\text{ }\underrightarrow{\text{measure}}\text{ }\varphi
_{r,t}\text{ as }l\rightarrow\infty,
\]
and then by Remark \ref{note3}, $\widetilde{\varphi}=\varphi_{r,t}$ almost
everywhere, which implies that $\varphi_{r,t}\in\widetilde{H}_{\infty}$.

(2) We first note that $\left\{  \varphi_{r,t}\right\}  _{r}$ is a \ Cauchy
sequence in $\left\Vert \cdot\right\Vert _{\beta}$, for any $\beta
\in\mathbb{N}$. Indeed, \ set $r\geq s$, since
\[
\varphi_{l,r,t}\text{ }\underrightarrow{\left\Vert \cdot\right\Vert _{\beta}%
}\text{ }\varphi_{r,t}\text{ as }l\rightarrow\infty,\text{ and }%
\varphi_{m,s,t}\text{ }\underrightarrow{\left\Vert \cdot\right\Vert _{\beta}%
}\text{ }\varphi_{s,t}\text{ as }m\rightarrow\infty,
\]
by the first part, we have%
\begin{align*}
\left\Vert \varphi_{r,t}-\varphi_{s,t}\right\Vert _{\beta}^{2}  &
=\lim_{l\rightarrow\infty}\lim_{m\rightarrow\infty}\left\Vert \varphi
_{l,r,t}\text{ }-\varphi_{m,s,t}\right\Vert _{\beta}^{2}\\
&  =\lim_{l\rightarrow\infty}\lim_{m\rightarrow\infty}\int\limits_{\mathbb{Q}%
_{p}^{n}}\Xi^{2\beta}(\xi)\left\vert \mathcal{F}\left(  \varphi_{l,r,t}%
\right)  (\xi)-\mathcal{F}\left(  \varphi_{m,s,t}\right)  (\xi)\right\vert
^{2}d^{n}\xi.
\end{align*}

We now note that
\[
\Xi^{2\beta}(\xi)\left\vert \mathcal{F}\left(  \varphi_{l,r,t}\right)  \text{
}(\xi)-\mathcal{F}\left(  \varphi_{m,s,t}\right)  (\xi)\right\vert ^{2}%
\leq4\Xi^{2\beta}(\xi)e^{-2t\left\vert f\left(  \xi\right)  \right\vert
_{p}^{\alpha}}\in L^{1},
\]
for any $\alpha,\beta,t>0$. By using the Lebesgue dominated convergence lemma
and Theorem \ref{Theo1},%

\begin{align*}
\left\Vert \varphi_{r,t}\text{ }-\varphi_{s,t}\right\Vert _{\beta}^{2}  &
=\int\limits_{\mathbb{Q}_{p}^{n}}\Xi^{2\beta}(\xi)\left\vert \mathcal{F}%
\left(  \varphi_{r,t}\right)  \text{ }(\xi)-\mathcal{F}\left(  \varphi
_{s,t}\right)  (\xi)\right\vert ^{2}d^{n}\xi\\
&  =\int\limits_{p^{-s}\mathbb{<}\left\Vert \xi\right\Vert _{p}\leq p^{-r}}%
\Xi^{2\beta}(\xi)e^{-2t\left\vert f\left(  \xi\right)  \right\vert
_{p}^{\alpha}}d^{n}\xi\\
&  \leq\int\limits_{\left\Vert \xi\right\Vert _{p}\leq p^{-r}}\Xi^{2\beta}%
(\xi)e^{-2t\left\vert f\left(  \xi\right)  \right\vert _{p}^{\alpha}}d^{n}%
\xi\rightarrow0\text{ as }r\rightarrow\infty.
\end{align*}

Therefore $\varphi_{r,t}$ $\underrightarrow{\left\Vert \cdot\right\Vert
_{\beta}}$ $\widetilde{\varphi}$, for some $\widetilde{\varphi}\in
\widetilde{H}_{\infty}$, for any $\beta\in\mathbb{N}$. By taking $\beta=0$, we
get $\varphi_{r,t}$ $\underrightarrow{\text{measure}}$ $\widetilde{\varphi}$.

On the other hand, by reasoning as in the proof of Lemma \ref{lemma10}, one
shows that $\varphi_{r,t}$ $\underrightarrow{\text{measure}}$ $Z(x,t)$ as
$r\rightarrow\infty$, and then $Z(x,t)\in\widetilde{H}_{\infty}$.
\end{proof}

\begin{remark}
\label{note5}By using the technique above presented one shows that
\[
\mathcal{F}^{-1}\left(  \left(  1-\Omega_{-r}\left(  \xi\right)  \right)
e^{-t\left\Vert \xi\right\Vert _{p}^{\alpha}}\right)  ,\mathcal{F}^{-1}\left(
e^{-t\left\Vert \xi\right\Vert _{p}^{\alpha}}\right)  \in\widetilde{H}%
_{\infty},
\]
for $r\in\mathbb{N}$, $t>0$, $\alpha>0$.
\end{remark}

\subsection{\label{Sect_reg}Regularization Effect in Parabolic Equations}

\begin{lemma}
\label{lemma8}Let $\varphi\in S$ and $\theta\in\widetilde{H}_{\infty}\cap
L^{1}$. Then $\varphi\ast\theta\in\widetilde{H}_{\infty}$.
\end{lemma}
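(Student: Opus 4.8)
The plan is to reduce everything to two observations: multiplication by $\mathcal{F}(\varphi)$ is a bounded operation on each norm $\|\cdot\|_{\beta}$, and $\widetilde{H}_{\infty}$ is, by Proposition \ref{espacio}, the $\rho$-completion of ${\LARGE \Phi}$. First I would record the elementary consequences of the hypotheses. Since $\theta\in\widetilde{H}_{\infty}\subseteq\widetilde{H}_{0}$ and $\|\cdot\|_{0}=\|\cdot\|_{L^{2}}$ by Plancherel (because $\Xi^{0}\equiv1$), we have $\theta\in L^{2}$; together with $\theta\in L^{1}$ this gives $\theta\in L^{1}\cap L^{2}$, so $\mathcal{F}(\theta)$ is a genuine bounded continuous function and the convolution identity $\mathcal{F}(\varphi\ast\psi)=\mathcal{F}(\varphi)\,\mathcal{F}(\psi)$ holds. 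Moreover $\varphi\in S\subseteq L^{1}\cap L^{2}$, so by Young's inequality $\varphi\ast\theta\in L^{1}\cap L^{2}$ is an honest function, and since $\varphi\in S$ its transform $\mathcal{F}(\varphi)\in S$ is bounded; set $C:=\|\mathcal{F}(\varphi)\|_{L^{\infty}}<\infty$.

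Next I would approximate. By Proposition \ref{espacio} there is a sequence $\{\theta_{l}\}\subseteq{\LARGE \Phi}$ with $\theta_{l}\to\theta$ in $\rho$, which by Lemma \ref{lemma7} means $\|\theta_{l}-\theta\|_{\beta}\to0$ for every $\beta\in\mathbb{N}$. Since $\varphi,\theta_{l}\in S$ are locally constant with compact support, the convolution $\varphi\ast\theta_{l}$ is again Schwartz--Bruhat: it is locally constant (uniformly in the integration variable) and supported in $\mathrm{supp}\,\varphi+\mathrm{supp}\,\theta_{l}$, a compact set. Furthermore $\mathcal{F}(\varphi\ast\theta_{l})(0)=\mathcal{F}(\varphi)(0)\,\mathcal{F}(\theta_{l})(0)=0$ because $\theta_{l}\in{\LARGE \Phi}$; hence $\varphi\ast\theta_{l}\in{\LARGE \Phi}$ for every $l$.

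The key estimate is then immediate. For any $\psi\in L^{1}\cap L^{2}$ and any $\beta\in\mathbb{N}$,
\begin{align*}
\|\varphi\ast\psi\|_{\beta}^{2} &=\int_{\mathbb{Q}_{p}^{n}}\Xi^{2\beta}(\xi)\,|\mathcal{F}(\varphi)(\xi)|^{2}\,|\mathcal{F}(\psi)(\xi)|^{2}\,d^{n}\xi\\
&\leq C^{2}\int_{\mathbb{Q}_{p}^{n}}\Xi^{2\beta}(\xi)\,|\mathcal{F}(\psi)(\xi)|^{2}\,d^{n}\xi=C^{2}\|\psi\|_{\beta}^{2}.
\end{align*}
Applying this with $\psi=\theta_{l}-\theta_{m}$ shows that $\{\varphi\ast\theta_{l}\}$ is Cauchy in each $\|\cdot\|_{\beta}$, hence $\rho$-Cauchy by (\ref{distancia}); since its terms lie in ${\LARGE \Phi}$, its limit lies in $\widetilde{H}_{\infty}$.

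Finally I would identify this limit. Applying the estimate with $\psi=\theta_{l}-\theta$ gives $\|\varphi\ast\theta_{l}-\varphi\ast\theta\|_{\beta}\leq C\,\|\theta_{l}-\theta\|_{\beta}\to0$ for every $\beta$; in particular, taking $\beta=0$, $\varphi\ast\theta_{l}\to\varphi\ast\theta$ in $L^{2}$, so by Remark \ref{note3} the two candidate limits coincide in measure and therefore almost everywhere. Hence $\varphi\ast\theta=\lim_{l}\varphi\ast\theta_{l}\in\widetilde{H}_{\infty}$. The only delicate point is the passage from Schwartz functions to the merely $L^{1}\cap L^{2}$ function $\theta$ when invoking the convolution theorem and expressing $\|\varphi\ast\theta\|_{\beta}$ through $\mathcal{F}(\varphi)\mathcal{F}(\theta)$; this is precisely where the hypothesis $\theta\in L^{1}$ is used, guaranteeing that $\mathcal{F}(\theta)$ is an honest function and that the factorization of the Fourier transform holds pointwise.
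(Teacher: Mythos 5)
Your proof is correct and follows essentially the same route as the paper: approximate $\theta$ by a sequence in ${\LARGE \Phi}$, use the bound $\|\varphi\ast\psi\|_{\beta}\leq\|\mathcal{F}(\varphi)\|_{L^{\infty}}\|\psi\|_{\beta}$ to get a Cauchy sequence in every norm, and identify the limit with $\varphi\ast\theta$ via $L^{2}$-convergence and convergence in measure (Remark \ref{note3}), with the hypothesis $\theta\in L^{1}$ justifying the pointwise factorization $\mathcal{F}(\varphi\ast\theta)=\mathcal{F}(\varphi)\mathcal{F}(\theta)$. You even supply a detail the paper leaves implicit, namely that $\varphi\ast\theta_{l}$ lies in ${\LARGE \Phi}$.
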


\begin{proof}
There exists a sequence $\left\{  \theta_{l}\right\}  \subset{\LARGE \Phi}$
such that $\theta_{l}$ $\underrightarrow{||\cdot||_{\beta}\text{\ }}\theta$
for any $\beta\in\mathbb{N}$. We claim that $\left\{  \varphi\ast\theta
_{l}\right\}  $ is a Cauchy sequence in $||\cdot||_{\beta}$, for any $\beta
\in\mathbb{N}$. Indeed, let $m\geq l$, then%

\begin{align*}
\left\Vert \varphi\ast\theta_{m}-\varphi\ast\theta_{l}\right\Vert _{\beta
}^{2}  &  =\int\limits_{\mathbb{Q}_{p}^{n}}\Xi^{2\beta}(\xi)|\mathcal{F}%
\left(  \varphi\right)  \left(  \xi\right)  |^{2}\left\vert \mathcal{F}\left(
\theta_{m}\right)  \left(  \xi\right)  -\mathcal{F}\left(  \theta_{l}\right)
\left(  \xi\right)  \right\vert ^{2}d^{n}\xi\\
&  \leq C\left(  \varphi\right)  \left\Vert \theta_{m}-\theta_{l}\right\Vert
_{\beta}^{2}\rightarrow0\text{, as }m,l\rightarrow\infty\text{.}%
\end{align*}

Thus, there exists $\widetilde{\theta}\in\widetilde{H}_{\infty}$ such that
$\varphi\ast\theta_{l}$ $\underrightarrow{\rho\text{\ }}\widetilde{\theta}$.
By taking $\beta=0$ this means $\varphi\ast\theta_{l}$ $\underrightarrow
{L^{2}\text{\ }}\widetilde{\theta}\in L^{2}$, and by Remark \ref{note3},
\[
\varphi\ast\theta_{l}\text{ }\underrightarrow{\text{measure}}\text{
}\widetilde{\theta}\text{ as }l\rightarrow\infty.
\]

On the other hand,%
\begin{align*}
&  \lim_{l\rightarrow\infty}\int\limits_{\mathbb{Q}_{p}^{n}}\left\vert
\varphi\ast\theta_{l}\left(  \xi\right)  -\varphi\ast\theta\left(  \xi\right)
\right\vert ^{2}d^{n}\xi\\
&  =\lim_{l\rightarrow\infty}\int\limits_{\mathbb{Q}_{p}^{n}}|\mathcal{F}%
\left(  \varphi\right)  \left(  \xi\right)  |^{2}\left\vert \mathcal{F}\left(
\theta_{l}\right)  \left(  \xi\right)  -\mathcal{F}\left(  \theta\right)
\left(  \xi\right)  \right\vert ^{2}d^{n}\xi\\
&  \leq C\left(  \varphi\right)  \lim_{l\rightarrow\infty}\int
\limits_{\mathbb{Q}_{p}^{n}}\left\vert \mathcal{F}\left(  \theta_{l}\right)
\left(  \xi\right)  -\mathcal{F}\left(  \theta\right)  \left(  \xi\right)
\right\vert ^{2}d^{n}\xi\\
&  \leq C\left(  \varphi\right)  \lim_{l\rightarrow\infty}\left\Vert
\theta_{l}-\theta\right\Vert _{L^{2}}^{2}=0,
\end{align*}
since $\theta_{l}$ $\underrightarrow{||\cdot||_{\beta}\text{\ }}\theta$ for
any $\beta\in\mathbb{N}$. Now by Remark \ref{note3}, we have
\[
\varphi\ast\theta_{l}\text{ }\underrightarrow{\text{measure}}\text{ }%
\varphi\ast\theta\text{ as }l\rightarrow\infty.
\]
Therefore $\widetilde{\theta}=\varphi\ast\theta$, almost everywhere, which
implies that $\varphi\ast\theta\in\widetilde{H}_{\infty}$.
\end{proof}

\begin{lemma}
\label{lemma8A}(1) Let $\theta\in\widetilde{H}_{\beta}\cap L^{1}$ for any
$\beta\in$ $\mathbb{N}$. Set%
\[%
\begin{array}
[c]{cccc}%
T_{\theta}: & \left(  S,\left\Vert \cdot\right\Vert _{L^{2}}\right)  &
\rightarrow & \left(  \widetilde{H}_{\beta},\left\Vert \cdot\right\Vert
_{\beta}\right) \\
&  &  & \\
& \varphi & \rightarrow & \varphi\ast\theta,
\end{array}
\]
for $\beta\in$ $\mathbb{N}$. Then $\varphi\ast\theta\in\widetilde{H}_{\infty}%
$. Then $T_{\theta}$\ is a linear continuous mapping for any $\beta\in$
$\mathbb{N}$.

(2) The mapping $T_{\theta}$ has unique continuous extension from $L^{2}$ into
$\widetilde{H}_{\infty}$.
\end{lemma}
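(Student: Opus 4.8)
The plan is to first settle well-definedness, then reduce the continuity of $T_\theta$ into each $(\widetilde{H}_\beta,\|\cdot\|_\beta)$ to a single multiplier bound, and finally obtain (2) from the completeness of $\widetilde{H}_\infty$ together with the density of $S$ in $L^2$. For well-definedness, observe that the hypothesis ``$\theta\in\widetilde{H}_\beta\cap L^1$ for every $\beta\in\mathbb{N}$'' says exactly that $\theta\in(\cap_{\beta\in\mathbb{N}}\widetilde{H}_\beta)\cap L^1=\widetilde{H}_\infty\cap L^1$, by Proposition \ref{espacio}. Hence for $\varphi\in S$ Lemma \ref{lemma8} gives $\varphi\ast\theta\in\widetilde{H}_\infty\subseteq\widetilde{H}_\beta$, so $T_\theta$ indeed takes values in every $\widetilde{H}_\beta$, and linearity is immediate from the bilinearity of convolution.

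The heart of part (1) is the continuity estimate. Using $\mathcal{F}(\varphi\ast\theta)=\mathcal{F}(\varphi)\,\mathcal{F}(\theta)$ together with Plancherel, I would write, for $\varphi\in S$,
\[
\|\varphi\ast\theta\|_\beta^2=\int_{\mathbb{Q}_p^n}\Xi^{2\beta}(\xi)\,|\mathcal{F}(\theta)(\xi)|^2\,|\mathcal{F}(\varphi)(\xi)|^2\,d^n\xi\le C_\beta^2\int_{\mathbb{Q}_p^n}|\mathcal{F}(\varphi)(\xi)|^2\,d^n\xi=C_\beta^2\,\|\varphi\|_{L^2}^2,
\]
where $C_\beta:=\big\|\,\Xi^\beta\,\mathcal{F}(\theta)\,\big\|_{L^\infty}$. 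Thus everything comes down to showing $C_\beta<\infty$, and this is the step I expect to be the genuine obstacle. In fact the continuity of $T_\theta$ into $\widetilde{H}_\beta$ is \emph{equivalent} to $\Xi^\beta\mathcal{F}(\theta)\in L^\infty$, because multiplication by $\Xi^\beta\mathcal{F}(\theta)$ is bounded on $L^2$ precisely with norm $\|\Xi^\beta\mathcal{F}(\theta)\|_{L^\infty}$ and $\mathcal{F}(S)$ is dense in $L^2$; so the $L^\infty$ bound cannot be circumvented.

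To bound $C_\beta$ I would split $\mathbb{Q}_p^n$ into $\|\xi\|_p\le 1$ and $\|\xi\|_p>1$. On the first region $\Xi(\xi)=\sum_i|\xi_i|_p^{d/w_i}\le n$, while $\theta\in L^1$ gives the pointwise bound $|\mathcal{F}(\theta)(\xi)|\le\|\theta\|_{L^1}$, so there $\Xi^\beta|\mathcal{F}(\theta)|\le n^\beta\|\theta\|_{L^1}$. On the unbounded region one must exploit the full strength of $\theta\in\cap_{\beta'}\widetilde{H}_{\beta'}$: by Theorem \ref{Theo1} the weight $\Xi^\beta$ is comparable to $|f|_p^\beta$, so $\Xi^\beta\mathcal{F}(\theta)$ is comparable to the symbol of $\boldsymbol{f}(D;\beta)\theta$, and controlling its sup-norm is exactly where the membership of $\theta$ in the higher spaces $\widetilde{H}_{\beta'}$ (together with the $L^1$ control of $\mathcal{F}(\theta)$) has to be invoked. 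This is the one estimate that genuinely requires both hypotheses at once, and I would isolate it as the crux of the whole lemma.

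Granting the estimate of part (1), part (2) is a routine application of the bounded-linear-transformation principle. Since $S$ is $\|\cdot\|_{L^2}$-dense in $L^2$, given $\varphi\in L^2$ I would choose $\{\varphi_l\}\subset S$ with $\varphi_l\to\varphi$ in $L^2$; the bound $\|T_\theta\varphi_l-T_\theta\varphi_m\|_\beta\le C_\beta\|\varphi_l-\varphi_m\|_{L^2}$ shows $\{T_\theta\varphi_l\}$ is Cauchy in every $\|\cdot\|_\beta$, hence Cauchy in the metric $\rho$ by Lemma \ref{lemma7}, and therefore convergent in the complete space $\widetilde{H}_\infty$ (Proposition \ref{espacio}). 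Defining $T_\theta\varphi$ as this limit, one checks in the usual way that the value is independent of the approximating sequence, that the extension is linear and still satisfies $\|T_\theta\varphi\|_\beta\le C_\beta\|\varphi\|_{L^2}$ for every $\beta$, and that it is the unique continuous extension, again by density of $S$ in $L^2$.
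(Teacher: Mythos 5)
Your treatment of well-definedness (via Lemma \ref{lemma8} and Proposition \ref{espacio}) and your part (2) (the standard density/completeness extension) match the paper. The problem is in the middle. Your reduction of the continuity of $T_{\theta}$ to the multiplier bound $C_{\beta}=\Vert\Xi^{\beta}\mathcal{F}(\theta)\Vert_{L^{\infty}}<\infty$ is correct and, as you yourself observe, unavoidable if the domain really carries the $L^{2}$-norm: a multiplication operator bounded on a dense subspace of $L^{2}$ forces its symbol into $L^{\infty}$. But you never establish this bound, and the hypotheses do not obviously yield it. On $\Vert\xi\Vert_{p}>1$ you only know $\Xi^{\beta^{\prime}}\mathcal{F}(\theta)\in L^{2}$ for every $\beta^{\prime}$ (membership in the spaces $\widetilde{H}_{\beta^{\prime}}$) together with $\mathcal{F}(\theta)\in C_{0}$ (from $\theta\in L^{1}$); Cauchy--Schwarz against the integrability statement of Lemma \ref{lemma9} upgrades this to $\Xi^{\beta}\mathcal{F}(\theta)\in L^{1}$ away from the origin, but none of this produces a pointwise bound, since $\mathcal{F}(\theta)$ need not be locally constant when $\theta$ is merely in $L^{1}$. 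So the one estimate you call the crux is a genuine gap, not a routine verification, and your proposal does not constitute a proof.

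It is worth recording what the paper actually does, because it takes a different route. It approximates $\theta$ by $\theta_{l}\in{\LARGE \Phi}$ in every $\Vert\cdot\Vert_{\beta}$, splits $\int\Xi^{2\beta}(\xi)|\mathcal{F}(\varphi)(\xi)|^{2}|\mathcal{F}(\theta_{l})(\xi)|^{2}d^{n}\xi$ over the set $V_{\delta,l}$ where $|\mathcal{F}(\theta_{l})-\mathcal{F}(\theta)|>\delta$ and its complement, kills the first piece using that $\mathcal{F}(\varphi)$ has compact support while $vol(V_{\delta,l})\rightarrow0$, and bounds the second by $(\delta+C(\theta))^{2}\Vert\varphi\Vert_{\beta}^{2}$ using only the Riemann--Lebesgue bound $|\mathcal{F}(\theta)|\leq C(\theta)$. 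The weight $\Xi^{2\beta}$ stays attached to $\mathcal{F}(\varphi)$ throughout, so the paper's conclusion is $\Vert\varphi\ast\theta\Vert_{\beta}\leq(\delta+C(\theta))\Vert\varphi\Vert_{\beta}$ --- a bound by the $\widetilde{H}_{\beta}$-norm of $\varphi$, not by $\Vert\varphi\Vert_{L^{2}}$. Your sharper reduction therefore exposes a real tension: the operator bound that the stated map $(S,\Vert\cdot\Vert_{L^{2}})\rightarrow(\widetilde{H}_{\beta},\Vert\cdot\Vert_{\beta})$ requires is exactly $C_{\beta}<\infty$, and neither your argument nor the paper's estimate supplies it. If you want to salvage your approach you must either prove $\Xi^{\beta}\mathcal{F}(\theta)\in L^{\infty}$ under additional hypotheses on $\theta$ (it does hold for the concrete kernels $Z(\cdot,t)$, whose Fourier transform $e^{-t|f(\xi)|_{p}^{\alpha}}$ decays faster than any power of $\Xi$), or weaken the domain norm to $\Vert\cdot\Vert_{\beta}$ as the paper's estimate actually delivers.
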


\begin{proof}
Since $\widetilde{H}_{\infty}=\cap_{\beta\in\mathbb{N}}\widetilde{H}_{\beta}$,
cf. Proposition \ref{espacio} (2), and by using Lemma \ref{lemma8}, we have
$T_{\theta}$ is well-defined. Let $\left\{  \theta_{l}\right\}  \subset
{\LARGE \Phi}$ be a sequence such that%
\begin{equation}
\theta_{l}\underrightarrow{||\cdot||_{\beta}\text{\ }}\theta\text{ for any
}\beta\in\mathbb{N} \label{seq}%
\end{equation}
as in the proof of Lemma \ref{lemma8}. Then
\begin{align*}
||\varphi\ast\theta||_{\beta}^{2}  &  =\lim_{l\rightarrow\infty}||\varphi
\ast\theta_{l}||_{\beta}^{2}\\
&  =\lim_{l\rightarrow\infty}\int\limits_{\mathbb{Q}_{p}^{n}}\Xi^{2\beta
}\left(  \xi\right)  \left\vert \mathcal{F}\left(  \varphi\right)  \left(
\xi\right)  \right\vert ^{2}\left\vert \mathcal{F}\left(  \theta_{l}\right)
\left(  \xi\right)  \right\vert ^{2}d^{n}\xi.
\end{align*}
On the other hand, taking $\beta=0$ in (\ref{seq}), one gets $\mathcal{F}%
\left(  \theta_{l}\right)  \underrightarrow{\text{ }L^{2}\text{\ }}%
\mathcal{F}\left(  \theta\right)  $, and by Remark \ref{note3},%
\[
\lim_{l\rightarrow\infty}vol\left(  \left\{  \xi\in\mathbb{Q}_{p}%
^{n};\left\vert \mathcal{F}\left(  \theta_{l}\right)  \left(  \xi\right)
-\mathcal{F}\left(  \theta\right)  \left(  \xi\right)  \right\vert
>\delta\right\}  \right)  =0,
\]
for any $\delta>0$. Fix $\epsilon$, $\delta>0$, then there exists $l_{0}%
\in\mathbb{N}$ such that
\begin{equation}
vol\left(  V_{\delta,l}\right)  :=vol\left(  \left\{  \xi\in\mathbb{Q}_{p}%
^{n};\left\vert \mathcal{F}\left(  \theta_{l}\right)  \left(  \xi\right)
-\mathcal{F}\left(  \theta\right)  \left(  \xi\right)  \right\vert
>\delta\right\}  \right)  <\epsilon\text{, for }l\geq l_{0}. \label{vol}%
\end{equation}

Now,
\begin{align*}
||\varphi\ast\theta||_{\beta}^{2}  &  =\lim_{l\rightarrow\infty}\left(
\int\limits_{V_{\delta,l}}\Xi^{2\beta}\left(  \xi\right)  \left\vert
\mathcal{F}\left(  \varphi\right)  \left(  \xi\right)  \right\vert
^{2}\left\vert \mathcal{F}\left(  \theta_{l}\right)  \left(  \xi\right)
\right\vert ^{2}d^{n}\xi\text{ }+\right. \\
&  \left.  \int\limits_{\mathbb{Q}_{p}^{n}\smallsetminus V_{\delta,l}}%
\Xi^{2\beta}\left(  \xi\right)  \left\vert \mathcal{F}\left(  \varphi\right)
\left(  \xi\right)  \right\vert ^{2}\left\vert \mathcal{F}\left(  \theta
_{l}\right)  \left(  \xi\right)  \right\vert ^{2}d^{n}\xi\right) \\
&  :=\lim_{l\rightarrow\infty}\left(  I_{1}+I_{2}\right)  .
\end{align*}

We now consider $\lim_{l\rightarrow\infty}I_{1}$. Since $\Xi^{2\beta}\left(
\xi\right)  \left\vert \mathcal{F}\left(  \varphi\right)  \left(  \xi\right)
\right\vert ^{2}\left\vert \mathcal{F}\left(  \theta_{l}\right)  \left(
\xi\right)  \right\vert ^{2}$ is a continuous function with compact support,%
\[
\lim_{l\rightarrow\infty}I_{1}\leq\left(  \sup_{\xi\in\text{supp}\left(
\mathcal{F}\left(  \varphi\right)  \right)  }\Xi^{2\beta}\left(  \xi\right)
\left\vert \mathcal{F}\left(  \varphi\right)  \left(  \xi\right)  \right\vert
^{2}\left\vert \mathcal{F}\left(  \theta_{l}\right)  \left(  \xi\right)
\right\vert ^{2}\right)  \lim_{l\rightarrow\infty}vol\left(  V_{\delta
,l}\right)  =0.
\]

We now study $\lim_{l\rightarrow\infty}I_{2}$. Since $\theta\in L^{1}$, then
$\mathcal{F}\left(  \theta\right)  \left(  \xi\right)  $ uniformly continuous
and $\left\vert \mathcal{F}\left(  \theta\right)  \left(  \xi\right)
\right\vert \rightarrow0$ as $\left\Vert \xi\right\Vert _{p}\rightarrow\infty$
(Riemann-Lebesgue theorem), from these facts follow that $|\mathcal{F}\left(
\theta\right)  \left(  \xi\right)  |^{2}\leq C\left(  \theta\right)  $. Then%
\begin{align*}
\lim_{l\rightarrow\infty}I_{2}  &  \leq\int\limits_{\mathbb{Q}_{p}%
^{n}\smallsetminus V_{\delta,l}}\Xi^{2\beta}\left(  \xi\right)  \left\vert
\mathcal{F}\left(  \varphi\right)  \left(  \xi\right)  \right\vert ^{2}\left(
\left\vert \mathcal{F}\left(  \theta_{l}\right)  \left(  \xi\right)
-\mathcal{F}\left(  \theta\right)  \left(  \xi\right)  \right\vert +\left\vert
\mathcal{F}\left(  \theta\right)  \left(  \xi\right)  \right\vert \right)
^{2}d^{n}\xi\\
&  \leq\left(  \delta+C\left(  \theta\right)  \right)  ^{2}\int
\limits_{\mathbb{Q}_{p}^{n}}\Xi^{2\beta}\left(  \xi\right)  \left\vert
\mathcal{F}\left(  \varphi\right)  \left(  \xi\right)  \right\vert ^{2}%
d^{n}\xi=\left(  \delta+C\left(  \theta\right)  \right)  ^{2}\left\Vert
\varphi\right\Vert _{\beta}^{2}\text{.}%
\end{align*}

In conclusion,
\[
||\varphi\ast\theta||_{\beta}\leq\left(  \delta+C\left(  \theta\right)
\right)  \left\Vert \varphi\right\Vert _{\beta}\text{, for any }\beta
\in\mathbb{N}\text{.}%
\]

(2) By the first part, using the fact that $S$ is dense in $L^{2}$ and that
$\widetilde{H}_{\infty}=\cap_{\beta\in\mathbb{N}}\widetilde{H}_{\beta}$, we
obtain a unique continuous extension $T_{\theta}:L^{2}\rightarrow\widetilde
{H}_{\infty}$.
\end{proof}

\begin{example}
The mapping $T:L^{2}\rightarrow\widetilde{H}_{\infty}$, $\phi\rightarrow
\phi\ast Z(\cdot,t)$, for any $t>0$, is continuous, since $Z(\cdot,t)\in
L^{1}$, for any $t>0$, see \cite[Corollary 1]{ZG3}.

Let $C_{b}\left(  \mathbb{Q}_{p}^{n},\mathbb{C}\right)  $ denote the space of
continuous and bounded functions from $\mathbb{Q}_{p}^{n}$\ to $\mathbb{C}$.
\end{example}

\begin{theorem}
\label{Theo7} Let $f\left(  D,\alpha\right)  $ be an elliptic
pseudo-differential operator. If $L^{2}\left(  \mathbb{Q}_{p}^{n}\right)  \cap
C_{b}\left(  \mathbb{Q}_{p}^{n},\mathbb{C}\right)  $, then the Cauchy problem
\begin{equation}
\left\{
\begin{array}
[c]{l}%
\frac{\partial u\left(  x,t\right)  }{\partial t}+\left(  f\left(
D,\alpha\right)  u\right)  \left(  x,t\right)  =0,\text{ }x\in\mathbb{Q}%
_{p}^{n},\text{ }t>0,\\
\\
u\left(  x,0\right)  =\varphi\left(  x\right)  ,
\end{array}
\right.  \label{Cauchy_Problem}%
\end{equation}
has a \ classical solution:%
\[
u\left(  x,t\right)  =\int\limits_{\mathbb{Q}_{p}^{n}}Z\left(  \eta,t\right)
\varphi\left(  x-\eta\right)  d^{n}\eta,
\]
which belongs to $\widetilde{H}_{\infty}$, for every fixed $t>0$.
\end{theorem}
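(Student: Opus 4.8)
The plan is to prove the two assertions of the statement separately: that the displayed convolution is a \emph{classical} solution of (\ref{Cauchy_Problem}), and that for each fixed $t>0$ it belongs to $\widetilde{H}_{\infty}$. The first assertion amounts to the construction of the fundamental solution of the parabolic equation and is available in the cited works \cite{ZG3}, \cite{V-V-Z}, \cite{Koch}, \cite{R-Z0}; the genuinely new content is the second assertion, which is where the function spaces constructed above are used.

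For the classical-solution part I would argue on the Fourier side. Since $\varphi\in L^{2}\cap C_{b}$ and $Z(\cdot,t)\in L^{1}$ (see \cite[Corollary 1]{ZG3}), the integral $u(x,t)=\int_{\mathbb{Q}_{p}^{n}}Z(\eta,t)\varphi(x-\eta)\,d^{n}\eta$ converges absolutely and defines a bounded continuous function of $x$ whose Fourier transform is $\mathcal{F}(u)(\xi,t)=e^{-t|f(\xi)|_{p}^{\alpha}}\mathcal{F}(\varphi)(\xi)$. Differentiation in $t$ under the integral is justified by the bound $e^{-t|f(\xi)|_{p}^{\alpha}}\leq e^{-tA_{0}^{\alpha}\|\xi\|_{p}^{\alpha d}}$ coming from Theorem \ref{Theo1}, which supplies an integrable dominating function; it gives $\partial_{t}\mathcal{F}(u)=-|f(\xi)|_{p}^{\alpha}\mathcal{F}(u)$, while by definition $\mathcal{F}(f(D,\alpha)u)=|f(\xi)|_{p}^{\alpha}\mathcal{F}(u)$, so that $\partial_{t}u+f(D,\alpha)u=0$. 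The initial condition follows from the approximate-identity property of the heat kernel established in \cite{ZG3}: one has $Z(\cdot,t)\geq 0$, $\int_{\mathbb{Q}_{p}^{n}}Z(\eta,t)\,d^{n}\eta=1$, and the mass concentrates at the origin as $t\to 0^{+}$, so that the boundedness and continuity of $\varphi$ yield $u(x,t)\to\varphi(x)$.

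For the membership $u(\cdot,t)\in\widetilde{H}_{\infty}$ I would apply Lemma \ref{lemma8A} directly. By Theorem \ref{Theo7A}(2) we have $Z(\cdot,t)\in\widetilde{H}_{\infty}=\cap_{\beta\in\mathbb{N}}\widetilde{H}_{\beta}$ (via Proposition \ref{espacio}(2)), and by \cite[Corollary 1]{ZG3} we have $Z(\cdot,t)\in L^{1}$; hence $Z(\cdot,t)\in\widetilde{H}_{\beta}\cap L^{1}$ for every $\beta\in\mathbb{N}$, which is exactly the hypothesis of Lemma \ref{lemma8A}. The continuous extension $T_{Z(\cdot,t)}\colon L^{2}\to\widetilde{H}_{\infty}$ then sends $\varphi\in L^{2}$ to $u(\cdot,t)=\varphi\ast Z(\cdot,t)\in\widetilde{H}_{\infty}$. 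It remains to note that the element produced by $T_{Z(\cdot,t)}$ coincides almost everywhere with the pointwise integral defining $u$: the two agree for $\varphi\in S$ and both are continuous as $L^{2}$-valued maps of $\varphi$ (by Young's inequality on the integral side and by Lemma \ref{lemma8A} on the other), so they agree on all of $L^{2}$.

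The step I expect to require the most care is the passage from the Fourier-side identities to a genuine pointwise classical solution---justifying the differentiation in $t$ under the integral sign, assigning pointwise meaning to the nonlocal operator $f(D,\alpha)u$, and controlling the $t\to 0^{+}$ limit. These are precisely the issues handled by the parabolic theory of \cite{ZG3}, so in the end the argument reduces to citing that theory for the classical-solution part and to Lemma \ref{lemma8A} for the regularization statement.
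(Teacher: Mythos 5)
Your proposal follows essentially the same route as the paper: the classical-solution part is delegated to the parabolic theory of \cite{ZG3} (the paper cites Lemma 5 there, noting that the needed decay estimates for $Z(x,t)$ are only available in the elliptic case, which is why the theorem is stated for elliptic rather than general quasielliptic operators), and the membership $u(\cdot,t)\in\widetilde{H}_{\infty}$ is obtained from Lemma \ref{lemma8A}(2) applied to $T_{Z(\cdot,t)}$ exactly as you do. Your write-up merely fleshes out the Fourier-side verification and the identification of $T_{Z(\cdot,t)}\varphi$ with the pointwise convolution, which the paper leaves implicit.
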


\begin{proof}
The fact that the initial value problem (\ref{Cauchy_Problem}) has a classical
solution follows from Lemma 5 in \cite{ZG3}, which requires an estimation for
$Z\left(  x,t\right)  $ showing its decay in space and time, such estimation
is only known in the case in which $f$ is quasielliptic of degree $d$ with
respect to $(1,\ldots,1)$, see \cite[Theorem 1 and Corollary 1]{ZG3}. The fact
$u\left(  x,t\right)  \in\widetilde{H}_{\infty}\left(  \mathbb{Q}_{p}%
^{n}\right)  $, for $t>0$, follows from Lemma \ref{lemma8A} (2).
\end{proof}

\begin{remark}
\label{Taibleson2}Theorem \ref{Theo6} is valid for the Taibleson operator,
i.e. if we replace the operator $f\left(  D,\alpha\right)  $ by $D_{T}%
^{\alpha}$ in\ the statement of Theorem \ref{Theo6}, then announced conclusion
is valid. The existence of a classical solution follows from the results of
\cite{R-Z0}, and the fact that $u\left(  x,t\right)  \in\widetilde{H}_{\infty
}$ follows from Lemma \ref{lemma8} and Remark \ref{note5}.
\end{remark}

\section{\label{Sect7}Pseudo-Differential Operators with Variable
Semi-Quasi-Elliptic Symbols}

In this section we consider pseudo-differential operators of the form
\[
\boldsymbol{F}(D;\alpha;x)\phi:=\mathcal{F}^{-1}(|F(\xi,x)|_{p}^{\alpha
}\mathcal{F}\left(  \phi\right)  )\text{, }%
\]
where $\alpha>0$, $\phi$ is a Lizorkin type function, and with $F(\xi,x)$ as
in (\ref{QS_symbol}). We call an extension of $\boldsymbol{F}(D;\alpha;x)$
a\textit{\ pseudo-differential operator with semi-quasielliptic symbol}. In
this section we determine the functions spaces where the equation
$\boldsymbol{F}(D;\alpha;x)u=v$ has a solution.

\subsection{Sobolev-type spaces}

\begin{definition}
\label{def_space2}Given $\beta\geq0$ and $\Xi\left(  \xi\right)  $ as before,
for $\phi\in S$, we define the following norm on $S$:
\[
||\phi||_{(\beta,\Xi)}^{2}=\int\limits_{\mathbb{Q}_{p}^{n}}[\max(1,\Xi
(\xi))]^{2\beta}|\mathcal{F}(\phi)(\xi)|^{2}d^{n}\xi.
\]

Set
\[
{\LARGE \Phi}_{M_{0}}:=\{\phi\in S;\mathcal{F}\left(  \phi\right)
|_{B_{M_{0}}}\equiv0\},
\]
where $B_{M_{0}}\left(  0\right)  =\{\xi\in\mathbb{Q}_{p}^{n};||\xi||_{p}\leq
p^{M_{0}}\}$, and the constant $M_{0}$ comes from inequality
(\ref{QS_inequality}). We define $\widetilde{H}_{(\beta,M_{0})}$ as the
completion of $\left(  {\LARGE \Phi}_{M_{0}},||\cdot||_{(\beta,\Xi)}\right)  $.
\end{definition}

\begin{remark}
If $\gamma\leq\beta$, then $||\phi||_{(\gamma,\Xi)}\leq||\phi||_{(\beta,\Xi)}$
for all $\phi\in{\LARGE \Phi}$. Also, $||\phi||_{L^{2}}\leq||\phi
||_{(\beta,\Xi)}$ for all $\beta\geq0$.
\end{remark}

\begin{lemma}
\label{lemma9}(1) If $\beta>\frac{n\max_{i}\left\{  w_{i}\right\}  }{2d}$,
then $I(\beta):=\int_{\mathbb{Q}_{p}^{n}}[\max(1,\Xi(\xi))]^{-2\beta}d^{n}%
\xi<\infty$.

\noindent(2) Take $\beta>\frac{n\max_{i}\left\{  w_{i}\right\}  }{2d}$. If
$\phi\in\widetilde{H}_{(\beta,M_{0})}$, then $\phi$\ is a uniformly continuous function.
\end{lemma}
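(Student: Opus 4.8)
The plan is to prove part (1) by a direct $p$-adic integration computation, and then to deduce part (2) from a Cauchy--Schwarz argument combined with Fourier inversion, using part (1) as the convergence input.

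For part (1), I would split the integral according to whether $\|\xi\|_{p}\leq 1$ or $\|\xi\|_{p}\geq p$. On $\mathbb{Z}_{p}^{n}$ one has $\Xi(\xi)\leq n$, so the integrand $[\max(1,\Xi(\xi))]^{-2\beta}$ lies between $n^{-2\beta}$ and $1$; since $\mathbb{Z}_{p}^{n}$ has Haar measure one, this region contributes a finite amount. The essential region is $\|\xi\|_{p}\geq p$. Writing $w_{\max}=\max_{i}\{w_{i}\}$ and choosing an index $i_{0}$ with $|\xi_{i_{0}}|_{p}=\|\xi\|_{p}>1$, the inequality $d/w_{i_{0}}\geq d/w_{\max}$ gives
\[
\Xi(\xi)\geq |\xi_{i_{0}}|_{p}^{d/w_{i_{0}}}\geq \|\xi\|_{p}^{d/w_{\max}}>1,
\]
so that $\max(1,\Xi(\xi))=\Xi(\xi)$ and $[\max(1,\Xi(\xi))]^{-2\beta}\leq \|\xi\|_{p}^{-2\beta d/w_{\max}}$. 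The remaining step is the standard spherical decomposition: the sphere $\{\|\xi\|_{p}=p^{m}\}$ has measure $p^{mn}(1-p^{-n})$, whence
\[
\int_{\|\xi\|_{p}\geq p}\|\xi\|_{p}^{-2\beta d/w_{\max}}\,d^{n}\xi=(1-p^{-n})\sum_{m=1}^{\infty}p^{m\left(n-2\beta d/w_{\max}\right)}.
\]
This geometric series converges precisely when $n-2\beta d/w_{\max}<0$, i.e. when $\beta>\tfrac{n\,w_{\max}}{2d}$, which is exactly the hypothesis; hence $I(\beta)<\infty$.

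For part (2), the key point is that part (1) makes the weight $[\max(1,\Xi)]^{-\beta}$ square-integrable. Concretely, for any $\psi\in{\LARGE \Phi}_{M_{0}}$, Cauchy--Schwarz (splitting $|\mathcal{F}(\psi)|=[\max(1,\Xi)]^{-\beta}\cdot[\max(1,\Xi)]^{\beta}|\mathcal{F}(\psi)|$) yields
\[
\int_{\mathbb{Q}_{p}^{n}}|\mathcal{F}(\psi)(\xi)|\,d^{n}\xi\leq I(\beta)^{1/2}\,\|\psi\|_{(\beta,\Xi)},
\]
so $\mathcal{F}(\psi)\in L^{1}$ with $L^{1}$-norm controlled by $\|\psi\|_{(\beta,\Xi)}$. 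Given $\phi\in\widetilde{H}_{(\beta,M_{0})}$, I would pick $\{\phi_{l}\}\subseteq{\LARGE \Phi}_{M_{0}}$ with $\phi_{l}\to\phi$ in $\|\cdot\|_{(\beta,\Xi)}$; applying the bound to $\phi_{l}-\phi_{m}$ shows $\{\mathcal{F}(\phi_{l})\}$ is Cauchy in $L^{1}$, hence converges in $L^{1}$ to some $g$. Since $\|\mathcal{F}^{-1}h\|_{L^{\infty}}\leq\|h\|_{L^{1}}$ (the additive character has modulus one), the uniformly continuous Schwartz--Bruhat functions $\phi_{l}=\mathcal{F}^{-1}(\mathcal{F}(\phi_{l}))$ converge uniformly to $\mathcal{F}^{-1}(g)$, which is therefore uniformly continuous.

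It then remains to identify this uniform limit with $\phi$. Because $\|\cdot\|_{L^{2}}\leq\|\cdot\|_{(\beta,\Xi)}$ (the preceding Remark), the sequence $\phi_{l}$ also converges to $\phi$ in $L^{2}$, so a subsequence converges almost everywhere to $\phi$; the same subsequence converges everywhere to $\mathcal{F}^{-1}(g)$, forcing $\phi=\mathcal{F}^{-1}(g)$ almost everywhere. Thus $\phi$ admits the uniformly continuous representative $\mathcal{F}^{-1}(g)$. The main obstacle I anticipate is not any single estimate, each of which is routine, but rather the bookkeeping needed to pass from the abstract completion element $\phi$ to an honest function: reconciling the $L^{1}$, $L^{2}$, and uniform limits through an almost-everywhere-convergent subsequence, and thereby justifying Fourier inversion for a limit element, is the delicate point.
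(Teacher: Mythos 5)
Your proof is correct and follows essentially the same route as the paper: part (1) via the bound $\Xi(\xi)\geq\Vert\xi\Vert_{p}^{d/\max_{i}\{w_{i}\}}$ for $\Vert\xi\Vert_{p}>1$ together with the standard convergence criterion for $\int\Vert\xi\Vert_{p}^{-s}d^{n}\xi$, and part (2) via Cauchy--Schwarz against $[\max(1,\Xi)]^{-\beta}$ to place $\mathcal{F}(\phi)$ in $L^{1}$. The only difference is that you spell out the completion bookkeeping (passing from $\phi_{l}\in{\LARGE \Phi}_{M_{0}}$ to the limit element and identifying the uniform limit with $\phi$ almost everywhere), which the paper leaves implicit in its one-line estimate.
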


\begin{proof}
(1) We first note that%
\begin{equation}
\Xi\left(  \xi\right)  \geq\left\Vert \xi\right\Vert _{p}^{\frac{d}{\max
_{i}\left\{  w_{i}\right\}  }}\text{, for }\left\Vert \xi\right\Vert
_{p}>1\text{.} \label{ineq6}%
\end{equation}
On the other hand,%
\begin{align*}
I(\beta)  &  =\int\limits_{\left\Vert \xi\right\Vert _{p}\leq1}[\max(1,\Xi
(\xi))]^{-2\beta}d^{n}\xi+\int\limits_{\left\Vert \xi\right\Vert _{p}>1}%
[\max(1,\Xi(\xi))]^{-2\beta}d^{n}\xi\\
&  :=I_{1}(\beta)+I_{2}(\beta).
\end{align*}

Since $[\max(1,\Xi(\xi))]^{-2\beta}$ is a continuous function, $I_{1}%
(\beta)<\infty$. We now note that $\left\Vert \xi\right\Vert _{p}>1$\ implies
$\Xi(\xi)>1$, and by (\ref{ineq6}), one gets%
\[
I_{2}(\beta)\leq\int\limits_{\left\Vert \xi\right\Vert _{p}>1}\left\Vert
\xi\right\Vert _{p}^{\frac{-2\beta d}{\max_{i}\left\{  w_{i}\right\}  }}%
d^{n}\xi\text{,}%
\]
and this last integral converges if $\frac{2\beta d}{\max_{i}\left\{
w_{i}\right\}  }>n$.

(2) It is sufficient to show that $\mathcal{F}\left(  \phi\right)  \in L^{1}$:%

\[
\int\limits_{\mathbb{Q}_{p}^{n}}\left\vert \mathcal{F}\left(  \phi\right)
\left(  \xi\right)  \right\vert d^{n}\xi\leq\left\Vert \phi\right\Vert
_{\beta}\sqrt{I(\beta)}<\infty\text{.}%
\]

\end{proof}

\begin{theorem}
\label{Theo9} Set $\beta\geq\alpha$. The operator
\[%
\begin{array}
[c]{llll}%
\boldsymbol{F}(D;\alpha;x): & \widetilde{H}_{(\beta,M_{0})} & \rightarrow &
\widetilde{H}_{(\beta-\alpha,M_{0})}\\
&  &  & \\
& \varphi & \mapsto & \boldsymbol{F}(D;\alpha;x)\varphi
\end{array}
\]
is a bicontinuous isomorphism of Banach spaces. In addition, if $\beta
>\alpha+\frac{n\max_{i}\left\{  w_{i}\right\}  }{2d}$ and $\phi\in
\widetilde{H}_{(\beta,M_{0})}$, then $\phi$\ is a uniformly continuous function.
\end{theorem}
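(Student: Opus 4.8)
The theorem has two separate assertions: the bicontinuous isomorphism property of $\boldsymbol{F}(D;\alpha;x)$ between the spaces $\widetilde{H}_{(\beta,M_{0})}$ and $\widetilde{H}_{(\beta-\alpha,M_{0})}$, and a regularity conclusion (uniform continuity) under a stronger hypothesis on $\beta$. The plan is to prove the isomorphism statement by imitating almost verbatim the chain of arguments already developed for the quasielliptic case in Lemmas \ref{lemma5}, \ref{lemma6} and Theorem \ref{surjective}, with the key analytic input now being the two-sided estimate of Theorem \ref{Teo3} in place of Theorem \ref{Theo1}. The regularity statement then follows immediately from Lemma \ref{lemma9}(2).

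First I would establish that the operator is well-defined and continuous. Because every $\phi\in{\LARGE \Phi}_{M_{0}}$ has $\mathcal{F}(\phi)$ supported in $\{\|\xi\|_{p}>p^{M_{0}}\}$, the crucial inequality \eqref{QS_inequality} applies pointwise on the support of $\mathcal{F}(\phi)$, giving $A_{0}\Xi(\xi)\leq|F(\xi,x)|_{p}\leq A_{1}\Xi(\xi)$ there. Since $\Xi(\xi)>1$ on this region, the weight $[\max(1,\Xi(\xi))]^{2\beta}$ coincides with $\Xi^{2\beta}(\xi)$, so the computation
\[
\|\boldsymbol{F}(D;\alpha;x)\phi\|_{(\beta-\alpha,M_{0})}^{2}=\int_{\mathbb{Q}_{p}^{n}}[\max(1,\Xi(\xi))]^{2(\beta-\alpha)}|F(\xi,x)|_{p}^{2\alpha}|\mathcal{F}(\phi)(\xi)|^{2}d^{n}\xi\leq A_{1}^{2\alpha}\|\phi\|_{(\beta,M_{0})}^{2}
\]
is the exact analogue of the estimate in Lemma \ref{lemma5}. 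One must first check that $\boldsymbol{F}(D;\alpha;x)\phi$ again lies in ${\LARGE \Phi}_{M_{0}}$ (its Fourier transform still vanishes on $B_{M_{0}}$), after which continuity extends by density to $\widetilde{H}_{(\beta,M_{0})}$ as in Lemma \ref{lemma6}. For surjectivity and the continuity of the inverse I would repeat the Cauchy-sequence construction of Theorem \ref{surjective}: given $v$ with approximating sequence $\varphi_{l}\in{\LARGE \Phi}_{M_{0}}$, define $u_{l}$ by $\mathcal{F}(u_{l})(\xi)=\mathcal{F}(\varphi_{l})(\xi)/|F(\xi,x)|_{p}^{\alpha}$ on $\{\|\xi\|_{p}>p^{M_{0}}\}$ and zero elsewhere; the lower bound $A_{0}\Xi(\xi)\leq|F(\xi,x)|_{p}$ shows $\{u_{l}\}$ is Cauchy, and injectivity follows from the same lower bound as in the final paragraph of Theorem \ref{surjective}. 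The second assertion is then immediate: if $\beta>\alpha+\frac{n\max_{i}\{w_{i}\}}{2d}$ then in particular $\beta>\frac{n\max_{i}\{w_{i}\}}{2d}$, so Lemma \ref{lemma9}(2) gives uniform continuity of any $\phi\in\widetilde{H}_{(\beta,M_{0})}$.

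\textbf{The main obstacle.} The one genuinely new point, absent in the constant-coefficient case, is the dependence of $F(\xi,x)$ on the variable $x$. One must verify that the constants $A_{0},A_{1},M_{0}$ in \eqref{QS_inequality} are \emph{uniform in} $x$ — which Theorem \ref{Teo3} does guarantee, precisely because each $\|c_{\boldsymbol{k}}(x)\|_{L^{\infty}}<\infty$ — so that all the integral estimates hold with $x$-independent bounds and the operator norm does not degenerate. The subtlety I would flag is that $\boldsymbol{F}(D;\alpha;x)$ is a genuine pseudo-differential operator with $x$-dependent symbol, so one should confirm that the symbol $|F(\xi,x)|_{p}^{\alpha}$ maps ${\LARGE \Phi}_{M_{0}}$ into itself and that the definition via $\mathcal{F}^{-1}$ is well-posed on Lizorkin-type functions; once the cutoff at $\|\xi\|_{p}>p^{M_{0}}$ is fixed, the estimate \eqref{QS_inequality} makes the symbol comparable to the $x$-free weight $\Xi^{\alpha}(\xi)$, and the entire argument reduces to the quasielliptic template. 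I do not expect the routine $L^{2}$-type computations to present difficulty.
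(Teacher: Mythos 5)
Your proposal follows essentially the same route as the paper: the paper's proof simply invokes the arguments of Lemmas \ref{lemma5}--\ref{lemma6} and Theorem \ref{surjective} with the estimate of Theorem \ref{Teo3} replacing Theorem \ref{Theo1}, constructs the inverse by setting $\mathcal{F}(u_{l})(\xi)=\mathcal{F}(\phi_{l})(\xi)/|F(\xi,x)|_{p}^{\alpha}$ on $\|\xi\|_{p}\geq p^{M_{0}+1}$ and zero on $B_{M_{0}}$, and derives the uniform continuity from Lemma \ref{lemma9}. The subtlety you flag about the $x$-dependence of the symbol (i.e.\ that $\mathcal{F}(\boldsymbol{F}(D;\alpha;x)\phi)$ is not literally $|F(\xi,x)|_{p}^{\alpha}\mathcal{F}(\phi)(\xi)$) is treated with the same brevity in the paper itself, so your write-up is at the same level of rigor as the original.
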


\begin{proof}
By using the same ideas given in the proofs of Lemmas (\ref{lemma5}%
)-(\ref{lemma6}), one shows that the operator $\boldsymbol{F}(D;\alpha
;x):\widetilde{H}_{(\beta,M_{0})}\rightarrow\widetilde{H}_{(\beta-\alpha
,M_{0})}$, $\varphi\rightarrow\boldsymbol{F}(D;\alpha;x)\varphi$ is
well-defined and continuous. In order to prove the surjectivity, set $\phi
\in\widetilde{H}_{(\beta,M_{0})}$, then there exists a Cauchy sequence
$\{\phi_{l}\}\subseteq{\LARGE \Phi}$ converging to $\phi$, i.e. $\phi_{l}$
$\underrightarrow{||\cdot||_{\beta}}$ $\phi$. For each $\phi_{l}$ we define
$u_{l}$ as follows:
\[
\mathcal{F}(u_{l})(\xi)=%
\begin{cases}
\frac{\mathcal{F}\left(  \phi_{l}\right)  (\xi)}{|F(\xi,x)|_{p}^{\alpha}} &
||\xi||_{p}\geq p^{M_{0}+1}\\
0 & ||\xi||_{p}\leq p^{M_{0}}.
\end{cases}
\]

By using the argument given in the proof of Theorem \ref{surjective}, one gets
that $\left\{  u_{l}\right\}  $ is a Cauchy sequence, and if $u=\lim
_{l\rightarrow\infty}u_{l}$, then $\boldsymbol{F}(D;\alpha;x)u=\phi$. The
injectivity of $\boldsymbol{F}(D;\alpha;x)$ is established as in the proof of
Theorem \ref{surjective}. Finally, the last assertion is Lemma \ref{lemma9}.
\end{proof}

\subsection{Invariant spaces over the action of $\boldsymbol{F}(D;\alpha;x)$.}

We consider$\ \cap_{\beta\in\mathbb{N}}\widetilde{H}_{(\beta,M_{0})}$ as a
locally convex space, the topology comes from the metric (\ref{metric}). Set
$\widetilde{H}_{(\infty,M_{0})}:=\overline{(\cap_{\beta\in\mathbb{N}%
}\widetilde{H}_{(\beta,M_{0})},\rho)}$ for the completion of $(\cap_{\beta
\in\mathbb{N}}\widetilde{H}_{(\beta,M_{0})},\rho)$, and $\overline
{({\LARGE \Phi}_{M_{0}},\rho)}$ for the completion of $({\LARGE \Phi}_{M_{0}%
},\rho)$.

Propositions \ref{espacio}, \ref{Prop3}, and Theorem \ref{Theo6} have a
counterpart in the spaces $\widetilde{H}_{(\beta,M_{0})}$, $\widetilde
{H}_{(\infty,M_{0})}$, and the corresponding proofs are easy variations of the
proofs given in Section \ref{invariant}. For instance,%

\[
\widetilde{H}_{(\infty,M_{0})}=\overline{({\LARGE \Phi}_{M_{0}},\rho)}%
=\bigcap\limits_{\beta\in\mathbb{N}}\widetilde{H}_{(\beta,M_{0})}%
\]
as complete metric spaces. By Lemma \ref{lemma9} all the elements of
$\widetilde{H}_{(\infty,M_{0})}$ are uniformly continuous functions. The proof
of the following results follows from Lemma \ref{lemma9} by using the proof of
Theorem \ref{Theo6}.

\begin{theorem}
\label{Theo10} The operator
\[%
\begin{array}
[c]{llll}%
\boldsymbol{F}(D;\alpha;x): & \widetilde{H}_{(\infty,M_{0})} & \rightarrow &
\widetilde{H}_{(\infty,M_{0})}\\
&  &  & \\
& \phi & \mapsto & \boldsymbol{F}(D;\alpha;x)\phi,
\end{array}
\]
is a bicontinuous isomorphism of locally convex vector spaces.
\end{theorem}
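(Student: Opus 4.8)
The plan is to follow the proof of Theorem~\ref{Theo6} line by line, substituting the semi-quasielliptic operator $\boldsymbol{F}(D;\alpha;x)$ for $\boldsymbol{f}(D;\alpha)$, the spaces $\widetilde{H}_{(\beta,M_{0})}$ and $\widetilde{H}_{(\infty,M_{0})}$ for $\widetilde{H}_{\beta}$ and $\widetilde{H}_{\infty}$, and invoking Theorem~\ref{Theo9} together with the uniform-in-$x$ estimate of Theorem~\ref{Teo3} wherever the proof of Theorem~\ref{Theo6} used Theorem~\ref{surjective} and Theorem~\ref{Theo1}. The identification $\widetilde{H}_{(\infty,M_{0})}=\overline{({\LARGE \Phi}_{M_{0}},\rho)}=\bigcap_{\beta\in\mathbb{N}}\widetilde{H}_{(\beta,M_{0})}$, together with the counterparts of Propositions~\ref{espacio} and~\ref{Prop3} noted in the text above, supplies the structural facts; what remains is to assemble them.

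First I would verify that the operator is well defined and continuous. Given $\phi\in\widetilde{H}_{(\infty,M_{0})}=\bigcap_{\gamma\in\mathbb{N}}\widetilde{H}_{(\gamma,M_{0})}$, for each $\beta\geq0$ I apply Theorem~\ref{Theo9} with $\gamma=\beta+\alpha$ to conclude $\boldsymbol{F}(D;\alpha;x)\phi\in\widetilde{H}_{(\beta,M_{0})}$, so the image lands in the intersection. Continuity in the metric $\rho$ then follows from continuity in each norm $\|\cdot\|_{(\beta,\Xi)}$ via the analogue of Lemma~\ref{lemma7}, exactly as in Proposition~\ref{Prop3}.

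Next comes surjectivity. For $\phi\in\widetilde{H}_{(\infty,M_{0})}$ I choose a Cauchy sequence $\{\phi_{l}\}\subseteq{\LARGE \Phi}_{M_{0}}$ with $\phi_{l}\to\phi$ in $\rho$, and set
\[
\mathcal{F}(u_{l})(\xi)=
\begin{cases}
\dfrac{\mathcal{F}(\phi_{l})(\xi)}{|F(\xi,x)|_{p}^{\alpha}} & \text{if }\|\xi\|_{p}\geq p^{M_{0}+1},\\[2mm]
0 & \text{if }\|\xi\|_{p}\leq p^{M_{0}},
\end{cases}
\]
as in the proof of Theorem~\ref{Theo9}. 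Since the support condition forces the division to take place only where $\|\xi\|_{p}\geq p^{M_{0}}$, Theorem~\ref{Teo3} bounds $|F(\xi,x)|_{p}$ above and below by $\Xi(\xi)$ with constants independent of $x$; this is precisely what guarantees that $\{u_{l}\}$ is Cauchy in every norm $\|\cdot\|_{(\beta,\Xi)}$, hence Cauchy in $\rho$, with limit $u\in\widetilde{H}_{(\infty,M_{0})}$. Continuity of the operator then gives $\phi_{l}=\boldsymbol{F}(D;\alpha;x)u_{l}\to\boldsymbol{F}(D;\alpha;x)u$ in $\rho$, whence $\boldsymbol{F}(D;\alpha;x)u=\phi$. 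Continuity of the inverse is read off from the reverse estimate in Theorem~\ref{Teo3}, as in Theorem~\ref{surjective}.

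For injectivity, suppose $\boldsymbol{F}(D;\alpha;x)u=0$ with $u\in\widetilde{H}_{(\infty,M_{0})}=\bigcap_{\beta\in\mathbb{N}}\widetilde{H}_{(\beta,M_{0})}$. Then the equation holds in each Banach space $\widetilde{H}_{(\beta,M_{0})}$, and the injectivity part of Theorem~\ref{Theo9} forces $u=0$. I expect the only genuine subtlety, already isolated in Theorem~\ref{Teo3}, to be the uniformity in $x$: because the symbol $|F(\xi,x)|_{p}^{\alpha}$ carries the variable $x$, the two-sided comparison with $\Xi(\xi)$ must hold with constants $A_{0},A_{1},M_{0}$ not depending on $x$, and it is exactly the cut-off to ${\LARGE \Phi}_{M_{0}}$ (Fourier transform vanishing on $B_{M_{0}}$) that confines every estimate to the region $\|\xi\|_{p}\geq p^{M_{0}}$ where that comparison is available. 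Everything else is a routine transcription of Theorem~\ref{Theo6}.
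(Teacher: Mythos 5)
Your proposal is correct and matches the paper's own treatment: the paper proves Theorem \ref{Theo10} precisely by transcribing the proof of Theorem \ref{Theo6} to the spaces $\widetilde{H}_{(\beta,M_{0})}$, using the identification $\widetilde{H}_{(\infty,M_{0})}=\bigcap_{\beta\in\mathbb{N}}\widetilde{H}_{(\beta,M_{0})}$, Theorem \ref{Theo9} in place of Theorem \ref{surjective}, and the $x$-uniform estimate of Theorem \ref{Teo3} on the region $\|\xi\|_{p}\geq p^{M_{0}}$ singled out by the cut-off defining ${\LARGE \Phi}_{M_{0}}$. You have correctly identified the only genuine point of care (uniformity in $x$ and the role of the support restriction), so nothing further is needed.
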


\end{document}